\renewcommand*{\backref}[1]{}
\renewcommand*{\backrefalt}[4]
{
    \ifcase #1
        No citation in the text.
    \or
        Cited on Page #2.
    \else
        Cited on Pages #2.
    \fi
}
\def\@tocline#1#2#3#4#5#6#7{\relax
  \ifnum #1>\c@tocdepth % then omit
  \else
    \par \addpenalty\@secpenalty\addvspace{#2}%
    \begingroup \hyphenpenalty\@M
    \@ifempty{#4}{%
      \@tempdima\csname r@tocindent\number#1\endcsname\relax
    }{%
      \@tempdima#4\relax
    }%
    \parindent\z@ \leftskip#3\relax \advance\leftskip\@tempdima\relax
    \rightskip\@pnumwidth plus4em \parfillskip-\@pnumwidth
    #5\leavevmode\hskip-\@tempdima
      \ifcase #1
       \or\or \hskip 1em \or \hskip 2em \else \hskip 3em \fi%
      #6\nobreak\relax
    \dotfill\hbox to\@pnumwidth{\@tocpagenum{#7}}\par% <---- \dotfill -> \hfill
    \nobreak
    \endgroup
  \fi}
\newtheorem{thm}{Theorem}[section]
\newtheorem{corollary}[thm]{Corollary}
\newtheorem{prop}[thm]{Proposition}
\newtheorem{assumption}[thm]{Assumption}
\newtheorem{question}[thm]{Question}
\newtheorem{thmx}{Theorem}
\newtheorem{corx}[thmx]{Corollary}
\theoremstyle{definition}
\newtheorem{defn}[thm]{Definition}
\newtheorem{remark}[thm]{Remark}
\newtheorem{construction}[thm]{Construction}
\newtheorem{example}[thm]{Example}
\newtheorem*{remark*}{Remark}
    \newtheoremstyle{TheoremNum}
        {8.0pt plus 2.0pt minus 4.0pt}{8.0pt plus 2.0pt minus 4.0pt} %%% space between body and thm
        {\itshape} %%% Thm body font
        {-0.15cm} %%% Indent amount (empty = no indent)
        {\bfseries} %%% Thm head font
        {.} %%% Punctuation after thm head
        { }  %%% Space after thm head
        {\thmname{#1}\thmnote{ \bfseries #3}}%%% Thm head spec
    \theoremstyle{TheoremNum}
    \newtheorem{duplicate}{}
\DeclareMathOperator{\Aut}{\mathrm{Aut}}
\DeclareMathOperator{\Ker}{\mathrm{Ker}}
\DeclareMathOperator{\im}{\mathrm{Im}}
\DeclareMathOperator{\Comm}{\mathrm{Comm}}
\DeclareMathOperator{\Isom}{\mathrm{Isom}}
\newcommand{\cala}{{\mathcal{A}}}
\newcommand{\calb}{{\mathcal{B}}}
\newcommand{\calc}{{\mathcal{C}}}
\newcommand{\calg}{{\mathcal{G}}}
\newcommand{\call}{{\mathcal{L}}}
\newcommand{\cals}{{\mathcal{S}}}
\newcommand{\calt}{{\mathcal{T}}}
\newcommand{\calu}{{\mathcal{U}}}
\newcommand{\calv}{{\mathcal{V}}}
\newcommand{\OO}{\mathrm{O}}
\newcommand{\LM}{\mathrm{LM}}
\newcommand{\CAT}{\mathrm{CAT}}
\DeclareMathOperator{\Ad}{\mathrm{Ad}}
\newcommand{\EE}{\mathbb{E}}
\newcommand{\RH}{\mathbb{R}\mathbf{H}} % Real hyperbolic space
\newcommand{\Covol}{\mathrm{Covol}}
\newcommand{\Vol}{\mathrm{Vol}}
\newcommand{\rank}{\mathrm{rank}}
\newcommand{\ZZ}{\mathbb{Z}}
\newcommand{\RR}{\mathbb{R}}
\tikzstyle{blackNode}=[fill=black, draw=black, shape=circle]
\title{Graphs and complexes of lattices}
\author{Sam Hughes}
\address[Current]{Sam Hughes, Rheinische Friedrich-Wilhelms-Universit\"at Bonn, Mathematical Institute, Endenicher Allee 60, 53115 Bonn, Germany}
\address[Former]{Sam Hughes, School of Mathematical Sciences, University of Southampton, Southampton, SO17 1BJ United Kingdom}
\email{sam.hughes.maths@gmail.com}\email{hughes@math.uni-bonn.de}
\date{\today}
\subjclass{20F67, 20E08, 57M07, 20E34, 57M60}
\begin{document}
\begin{abstract}
We study lattices acting on $\CAT(0)$ spaces via their commensurated subgroups.  To do this we introduce the notions of graphs and complexes of lattices which encode graph and complex of group splittings of $\CAT(0)$ lattices.  Using this framework we characterise irreducible uniform $(\Isom(\EE^n)\times T)$-lattices by $C^\ast$-simplicity and give a necessary condition for lattices in products with a Euclidean factor to be biautomatic.  We also construct non-residually finite uniform lattices acting on products of right-angled buildings and non-biautomatic lattices acting on the product of $\EE^n$ and a right-angled building.
\end{abstract}
\maketitle

\section{Introduction}
Let $H$ be a locally compact group with Haar measure $\mu$.  A discrete subgroup $\Gamma\leq H$ is a \emph{lattice} if the covolume $\mu(H/\Gamma)$ is finite.  We say the lattice \emph{uniform} is $H/\Gamma$ is cocompact and \emph{non-uniform} otherwise.  We say a lattice $\Gamma$ in a product $H_1\times H_2$ is \emph{algebraically irreducible} if no finite index subgroup of $\Gamma$ splits as a direct product of two infinite groups, otherwise we say $\Gamma$ is \emph{reducible}.  We will give a topological interpretation of irreducibility for $\CAT(0)$ lattices in Section~\ref{sec.irr}.  In the uniform case we will simply refer to the lattice as \emph{irreducible}.  Given a pair of locally compact groups $H_1$ and $H_2$ there are a number of basic questions one can ask:
\begin{enumerate}[label=(Q\arabic*)]
    \item Does $H_1\times H_2$ contain algebraically irreducible lattices? \label{q.intro.1}
    \item What are the generic properties of an algebraically irreducible lattice? \label{q.intro.2}
\end{enumerate}

In the classical setting of lattices in semisimple Lie groups and linear algebraic groups over local fields these questions are well studied.  Indeed, there are deep theorems such as the Margulis normal subgroup theorem, super-rigidity theorem, and the arithmeticity theorem \cite{MargulisBook}.

The non-classical setting is more complicated and was initiated by studying lattices in the full automorphism group of a locally-finite polyhedral complex.  A striking example of the non-classical setting is given by the work of Burger and Mozes \cite{BurgerMozes1997,BurgerMozes2000struct,BurgerMozes2000lat}.  The authors constructed torsion-free simple groups which could be realised as cocompact irreducible lattices in a product of automorphism groups of locally-finite trees.  More generally, we can consider lattices acting on $\CAT(0)$ spaces; a class which encompasses symmetric spaces, non-positively curved manifolds, Euclidean and hyperbolic buildings, and more.  The reader is referred to \cite{BridsonHaefligerBook} for a comprehensive introduction to the theory.

\begin{assumption}
Throughout this paper, all spaces are assumed to have finite dimensional Tits boundary and all actions of groups on graphs or polyhedral complexes are assumed to be \emph{admissible}.  That is, each element of a group fixes pointwise each cell it stabilises. 
\end{assumption}

\begin{remark}
    An action of a group $G$ on a tree $\calt$ is admissible if and only if $G$ acts on the tree without inversions.  Note that for any group action on a polyhedral complex $X$, the action can be made admissible by barycentric subdividing $X$.
\end{remark}

% Thus, one should find a class of spaces which contain the exciting phenomena to be found in products of polyhedral complexes whilst enjoying a strong geometric grounding.  The answer was to be found in the notion of non-positive curvature or $\CAT(0)$ spaces.  The theory encompasses symmetric spaces, non-positively curved manifolds, Euclidean and hyperbolic buildings, and more \cite{BridsonHaefligerBook}.  The reader is referred to \cite{BridsonHaefligerBook} for a comprehensive introduction to the theory.

We say a locally compact group is \emph{irreducible} if no finite index open subgroup splits non-trivially as a direct product.  We say a metric space is \emph{irreducible} if it does not decompose as a a non-trivial direct product.  

A systematic study of the full isometry groups of $\CAT(0)$ spaces and their lattices was undertaken by Caprace and Monod \cite{CapraceMonod2009a,CapraceMonod2009b,CapraceMonod2019}.  The authors showed in \cite[Theorem~1.6]{CapraceMonod2009a}, that under mild hypotheses on a $\CAT(0)$ space $X$, there is a finite index subgroup of $H\leq\Isom(X)$ which splits as
\begin{equation}  
H\cong\Isom(\EE^n)\times S_1\times\dots\times S_p \times D_1\times\dots\times D_q,
\end{equation}
for some $n,p,q\geq 0$, where each $S_i$ is an almost connected simple Lie group with trivial centre and each $D_j$ is a totally disconnected irreducible group with trivial amenable radical.  Moreover by \cite[Addendum~1.8]{CapraceMonod2009a}, $X$ itself splits as
\begin{equation}
X= \EE^n\times X_1\times\dots\times X_p\times Y_1\times\dots\times Y_q 
\end{equation}
where each $X_i$ is an irreducible symmetric space of non-compact type and each $Y_j$ is an irreducible minimal $\CAT(0)$-space.

Taking these decompositions as a starting point motivates a new approach towards $\CAT(0)$ groups, that is, understanding the lattices in each of the factors individually and then how the factors interact.  The latter question is the central goal of this paper:  To provide a combinatorial framework for studying lattices in products of irreducible $\CAT(0)$ spaces and deduce properties of the algebraically irreducible lattices.  To this end we introduce the notion of a \emph{graph of lattices} (Definition~\ref{def.gol}) with fixed locally-finite Bass--Serre $\calt$ (we often assume that its automorphism group is non-discrete and unimodular, these are essentially non-degeneracy conditions so that there are uniform tree lattices).  Note that in the case of a product of two trees a similar construction was considered by Benakli and Glasner \cite{BenakliGlasner2002}.

\begin{defn}
Let $P$ be a property of a group.  We say a group $\Gamma$ is \emph{covirtually $P$} if there exists a finite normal subgroup $F\trianglelefteq \Gamma$ and $\Gamma/F$ has the property $P$.  Let $H$ be a locally compact group $H$.  We say a pair $(A,\psi)$ is \emph{covirtually a (uniform) $H$-lattice} if $A$ is a group and there exists a homomorphism $\psi\colon A\to H$ such that $\im(\psi)$ is a (uniform) $H$-lattice and $\ker(\psi)$ is finite.  If $(A,\psi)$ is covirtually an $H$-lattice, we define the \emph{covolume of $A$} to be $\Covol(A)=\mu(H/\psi(A))$, where $\mu$ is the Haar measure on $H$.
\end{defn}

Roughly a graph of lattices is a graph of groups such that all local groups are commensurable finite-by-$\{H$-lattices$\}$, equipped with a morphism $\psi$ to $H$.    We use this to study lattices in the product of $T:=\Aut(\calt)$ and closed subgroups $H$ of the isometry group of a fairly generic $\CAT(0)$ space.  We prove a structure theorem for $(H\times T)$-lattices.  That is, we show every $(H\times T)$-lattice gives rise to a graph of $H$-lattices and conversely, we give necessary and sufficient conditions for a graph of $H$-lattices to be an $(H\times T)$-lattice.  It should be mentioned that versions of this theorem are almost certainly known to experts.  Our main new contribution is the formalism and the plethora of applications to follow.

\begin{duplicate}[Theorem~\ref{thm.structure}]
Let $X$ be a proper $\CAT(0)$ space and let $H=\Isom(X)$ contain a uniform lattice.  Let $(A,\cala,\psi)$ be a graph of $H$-lattices with locally-finite Bass--Serre tree $\calt$, and fundamental group $\Gamma$.  Suppose $T=\Aut(\calt)$ is non-discrete and unimodular.
\begin{enumerate}
    \item Assume $A$ is finite.  If each local group $(A_\sigma,\psi|_{A_\sigma})$ is covirtually a uniform $H$-lattice, and the kernel $\Ker(\psi|_{A_\sigma})$ acts faithfully on $\calt$, then $\Gamma$ is a uniform $(H\times T)$-lattice and hence a $\CAT(0)$ group.  Conversely, if $\Lambda$ is a uniform $(H\times T)$-lattice, then $\Lambda$ splits as a finite graph of uniform $H$-lattices with Bass--Serre tree $\calt$. 
    \item Assume $X$ is a $\CAT(0)$ polyhedral complex.  Let $\mu$ be the normalised Haar measure on $H$.  If for each local group $A_\sigma$ the kernel $K_\sigma=\Ker(\psi|_{A_\sigma})$ acts faithfully on $\calt$ and the sum $\sum_{\sigma\in VA}\Covol(A_\sigma)/|K_\sigma|$ converges, then $\Gamma$ is a $(H\times T)$-lattice.  Conversely, if $\Lambda$ is a $(H\times T)$-lattice, then $\Lambda$ splits as a graph of $H$-lattices with Bass--Serre tree $\calt$.
\end{enumerate}
\end{duplicate}

The attentive reader will have noticed in \Cref{thm.structure}\eqref{thm.gol.nonuniform} we have assumed a normalisation for the Haar measure $\mu$ on $H$.  Whilst the choice of normalisation does affect the results, the normalisation provided by Serre's covolume formula (\Cref{SerresCovolumeFormula}) makes for more pleasing statements.   Any $\Gamma$ as in \Cref{thm.structure}\eqref{thm.gol.uniform} is quasi-isometric to $X\times\calt$, see \Cref{thm.gol.qi}.  Note that Bass--Kulkarni \cite[Existence Theorem]{BassKulkarni1990} characterised the trees that admit a uniform lattice to be exactly the trees that have unimodular automorphism group.

We also introduce an analogous construction we call a \emph{complex of lattices} (Definition~\ref{def.col}) by replacing the tree with a  $\CAT(0)$ polyhedral complex.  In this setting we prove a completely analogous structure theorem (Theorem~\ref{thm.col}).  The reader may wonder why we introduce both, the answer is provided in \Cref{sec.functor} where we adapt a functor of Thomas \cite{Thomas2006}, from graphs of groups to complexes of groups, to our setting.
% In the process we deduce some consequences about commensurated subgroups of $\CAT(0)$ groups (see also Corollary~\ref{cor.commSubs2}) which generalises splittings of lattices in products of trees and twin buildings.

% \begin{corx}[Corollary~\ref{cor.commSubs}]\label{corx.commSubs}
% Let $X=X_1\times X_2$ be a proper cocompact minimal $\CAT(0)$ space and $H=\Isom(X_1)\times\Isom(X_2)$.  Suppose $X_1$ is a $\CAT(0)$ polyhedral complex.  Then, for any uniform $H$-lattice $\Gamma$, the cell stabilisers of $X_1$ in $\Gamma$ are commensurated, commensurable, and isomorphic to finite-by-$\{\Isom(X_2)$-lattices$\}$.
% \end{corx}

Let $\Gamma$ be a discrete group.  The \emph{reduced $C^\ast$-algebra} of $\Gamma$, denoted $C^\ast_r(\Gamma)$, is the norm closure of the algebra of bounded operators on $\ell^2(\Gamma)$ by the left regular representation of $\Gamma$.  We say $\Gamma$ is \emph{$C^\ast$-simple} if $C^\ast_r(\Gamma)$ has exactly two norm-closed two-sided ideals $0$ and $C^\ast_r(\Gamma)$ itself. We refer the reader to \cite{delaharpe07} for a general survey and \cite{BreuillardKalantarKennedyOzawa2017} for a number of recent developments.  In the setting of $\CAT(0)$ groups there is a characterisation of $C^\ast$-simple $\CAT(0)$ cubical groups \cite{KarSageev2016} and of linear groups \cite[Theorem~1.6]{BreuillardKalantarKennedyOzawa2017}.  In Section~\ref{sec.lat.Cstar} we study $C^\ast$-simplicity of a lattice in a product space with a tree factor (Theorem~\ref{thm.cstar.main}).  In particular we obtain a $C^\ast$-simplicity characterisation of irreducible uniform $(\Isom(\EE^n)\times T)$-lattices, see \Cref{thm.ExT.cstar}.  

A key step in the proof of \Cref{thm.ExT.cstar} is to show a uniform $(\Isom(\EE^n)\times T)$-lattice is irreducible if and only if it acts faithfully on $\calt$, see \Cref{prop.faithful.isomEn}.   Combining these results with another result of the author \cite[Theorem~A]{Hughes2022}, which gives another characterisation in terms of virtual fibring, we obtain the following characterisation of irreducible uniform $(\Isom(\EE^n)\times T)$-lattices.  Recall a group \emph{fibres} if it admits a surjection onto $\ZZ$ with finitely generated kernel.

\setcounter{thmx}{1}
\begin{thmx}
    Let $\mathcal{T}$ be a locally-finite leafless tree and $T=\Aut(\mathcal{T})$ be unimodular.  Let $n\geq1$ and $\Gamma<\Isom(\EE^n)\times T$ be a uniform lattice.  The following are equivalent:
\begin{enumerate}
    \item $\Gamma$ is an irreducible $(\Isom(\EE^n)\times T)$-lattice;
    \item $\Gamma$ acts on $\calt$ faithfully;
    \item $\Gamma$ does not virtually fibre;
    \item $\Gamma$ is $C^\ast$-simple.
\end{enumerate}
\end{thmx}

In Section~\ref{sec.biaut} we extend the work of Leary and Minasyan \cite{LearyMinasyan2019} to obtain a necessary condition for biautomaticity for lattices in a product with non-trivial Euclidean de Rham factor (Theorem~\ref{thm.notbiaut}).  We will give the relevant definitions in Section~\ref{sec.biaut}.

In Section~\ref{sec.salvetti} we adapt a construction of Horbez and Huang \cite{HorbezHuang2020} to extend actions from a regular tree to the universal cover of a Salvetti complex $\widetilde{S}_L$ with defining graph $L$.  In particular, from a graph of lattices, one obtains a complex of lattices.   We use this to construct towers of lattices in the isometry group whose covolume converges to zero.

In \cite{Thomas2006}, Thomas constructs a functor from graphs of groups covered by a fixed biregular tree $\calt$ to complexes of groups covered by a fixed ``sufficiently symmetric'' right-angled building $X$ with parameters determined by the valences of $\calt$.  We will give the relevant definitions in Section~\ref{sec.functor.rab}.  In Theorem~\ref{thm.functor.new} we show that Thomas' functor theorem takes a graph of lattices to a complex of lattices and in particular $(H\times T)$-lattices to $(H\times A)$-lattices, where $T=\Aut(\calt)$, $A=\Aut(X)$, and $H$ is a closed subgroup of the isometry group of a $\CAT(0)$ space (under mild hypothesis).  As consequences we construct more $\CAT(0)$ groups which are not virtually biautomatic (Corollary~\ref{cor.thomas.notbiaut}) and both uniform and non-uniform irreducible lattices in products of fairly arbitrary hyperbolic and Euclidean buildings (Corollary~\ref{thm.treelatstobuildings}).  We highlight two special cases here:

\begin{corx}[Special case of Corollary~\ref{cor.thomas.notbiaut}]\label{corx.thomas.notbiaut}
Let $X$ be the right-angled building of a regular $m$-gon of uniform thickness $10n$ and let $A=\Aut(X)$.  For each $n\geq 2$ there exists a weakly irreducible uniform $(\Isom(\EE^n)\times A)$-lattice which is not virtually biautomatic nor residually finite.  In particular, if $Y$ is irreducible, then the direct product of a uniform $A$-lattice with $\ZZ^2$ is not quasi-isometrically rigid even amongst $\CAT(0)$ groups.
\end{corx}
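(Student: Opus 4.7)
The plan is to combine a Leary--Minasyan-type lattice in $\Isom(\EE^n)\times T$ with Thomas' functor (Theorem~\ref{thm.functor.new}) to transport the pathologies to $\Isom(\EE^n)\times A$. Concretely, using Theorem~\ref{thmx.structure}(1) and the construction underlying Section~\ref{sec.biaut} (in the spirit of Example~\ref{ex.nonbiaut.salvetti}), I would first build a finite graph of uniform $\Isom(\EE^n)$-lattices $(B,\calb,\psi)$ whose Bass--Serre tree $\calt$ is the biregular tree of valence $10n$ needed as input to Thomas' construction, and whose fundamental group $\Gamma$ is a weakly irreducible uniform $(\Isom(\EE^n)\times T)$-lattice that is neither virtually biautomatic nor residually finite. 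Non-biautomaticity is arranged by invoking Theorem~\ref{thm.notbiaut} (using $n\geq 2$ and the $\EE^n$ factor of the ambient product); failure of residual finiteness is arranged, Burger--Mozes-style, by choosing the local actions of vertex stabilisers on $\calt$ to be non-residually-finite permutation groups.

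Next I would apply the functor of Theorem~\ref{thm.functor.new} to $(B,\calb,\psi)$. This outputs a complex of uniform $\Isom(\EE^n)$-lattices whose Bass--Serre complex is the right-angled building $X$ of a regular $m$-gon of uniform thickness $10n$. Let $\Lambda$ denote the fundamental group; by Theorem~\ref{thm.functor.new} together with the complex-of-lattices structure theorem, $\Lambda$ is a uniform $(\Isom(\EE^n)\times A)$-lattice.

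The third step is checking that $\Lambda$ retains the desired pathologies. Weak irreducibility survives because the $\Isom(\EE^n)$-projections of vertex stabilisers are unchanged by the functor (only the tree action is replaced by a building action), while the projection to $A$ is dense by construction. Non-biautomaticity follows from a second application of Theorem~\ref{thm.notbiaut}, applied now to $\Lambda$ as a complex of $\Isom(\EE^n)$-lattices, since the relevant commensurated subgroups and the Euclidean de Rham factor persist. Non-residual-finiteness is inherited because the functor fixes the local groups and their actions on links, so a finite quotient of $\Lambda$ would restrict to a finite quotient of a vertex group realising the prescribed non-residually-finite local action, a contradiction.

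The QI-rigidity consequence is then immediate for $n=2$: by Corollary~\ref{corx.commSubs} or the Švarc--Milnor lemma, $\Lambda$ is quasi-isometric to $X\times\EE^2$, as is $\Delta\times\ZZ^2$ for any uniform $A$-lattice $\Delta$ (when $X$ is irreducible, so that $\Delta$ is quasi-isometric to $X$). Since $\Delta\times\ZZ^2$ is residually finite whenever $\Delta$ is, while $\Lambda$ has no residually finite finite-index subgroup, the two $\CAT(0)$ groups are not commensurable and $\Delta\times\ZZ^2$ fails to be quasi-isometrically rigid among $\CAT(0)$ groups. The main obstacle is step three: one must verify that Thomas' functor really does transport the local-action conditions that obstruct residual finiteness and biautomaticity. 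In principle this is the content of Theorem~\ref{thm.functor.new}, but it requires care in tracking cell stabilisers, their commensurations, and the way the $\EE^n$-projection interacts with the new building action.
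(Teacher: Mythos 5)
Your overall architecture is exactly the paper's: take a Leary--Minasyan-type uniform $(\Isom(\EE^n)\times T_{10n})$-lattice with infinite projection to $\OO(n)$ (the paper uses the explicit group $\Gamma_n$ of Example~\ref{ex.LM.odddim.lat}), push it through Thomas' functor via Theorem~\ref{thm.functor.new} to get a uniform $(\Isom(\EE^n)\times A)$-lattice, apply Theorem~\ref{thm.notbiaut} to kill biautomaticity, and use irreducibility to break quasi-isometric rigidity. However, two of your justifications do not work as stated.

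First, your mechanism for non-residual finiteness is wrong at both ends. The vertex and edge stabilisers of these lattices are finite-by-crystallographic, hence residually finite, and a local action on a locally finite tree is a finite permutation group, hence trivially residually finite --- so there is no such thing as ``choosing the local actions to be non-residually-finite permutation groups,'' and a finite quotient of $\Lambda$ restricted to a vertex group yields no contradiction. Non-residual finiteness here is a global phenomenon: the input lattice $\Gamma$ is algebraically irreducible and its projection to the tree factor (equivalently, to $\Isom(\EE^n)$) has non-trivial kernel, so Theorem~\ref{thm.CM.nonresfin} applies; it is then inherited by $F\Gamma$ simply because $\Gamma$ embeds into $F\Gamma$ and residual finiteness passes to subgroups (this is precisely Theorem~\ref{thm.functor.new}(5)).

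Second, your quasi-isometric rigidity argument distinguishes $\Lambda$ from $\Delta\times\ZZ^2$ by residual finiteness, which requires $\Delta$ to be residually finite; this fails for arbitrary uniform $A$-lattices (the paper constructs non-residually-finite ones), and residual finiteness is in any case delicate under virtual isomorphism since finite-by-(residually finite) groups need not be residually finite. The paper's argument avoids this: $\Delta\times\ZZ^2$ is reducible, whereas $\Lambda$ is algebraically irreducible by Theorem~\ref{thm.CMirrCrit} (weak irreducibility plus minimality of the actions), and algebraic irreducibility is a virtual isomorphism invariant. You already have weak irreducibility of $\Lambda$ in hand, so this is the route you should take. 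The remaining steps --- the functor preserving the $\Isom(\EE^n)$-projection and the (non-)discreteness of the building-factor projection (Theorem~\ref{thm.functor.new}(4)), and the application of Theorem~\ref{thm.notbiaut} to $\Lambda$ --- match the paper and are fine.
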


\begin{corx}[Special case of Corollary~\ref{thm.treelatstobuildings}]
    There exist irreducible uniform non-residually finite lattices in finite products of certain right-angled buildings.
\end{corx}

\subsection*{Acknowledgements}
This paper contains material from the author's PhD thesis \cite{HughesThesis}.  The author would like to thank his PhD supervisor Ian Leary for his kind natured humour, guidance, and support.  The author would like to thank Motiejus Valiunas for sharing his preprint \cite{Valiunas2021a} and Ashot Minasyan for helpful comments on an earlier draft of this paper.  Additionally, the author would like to thank Naomi Andrew, Pierre Emmanuel-Caprace, Mark Hagen, Susan Hermiller, Jingyin Huang, and Harry Petyt for helpful correspondence and conversations.  This work was supported by the Engineering and Physical Sciences Research Council grant number 2127970.  This work has received funding from the European Research Council (ERC) under the European Union’s Horizon 2020 research and innovation programme (Grant agreement No. 850930).  Finally, the author would like to thank the anonymous referee for their very careful reading of this work and their thoughtful comments

\subsection*{Bibiliographical note}
 This paper was part of a longer `paper' contained in the author's PhD thesis in a chapter of the same name (see \cite[Paper~5]{HughesThesis}).  This has been split at the request of a referee.  A sequel to this paper \emph{``Irreducible lattices fibring over the circle''} \cite{Hughes2022} will contain many of the remaining results from \cite[Paper~5]{HughesThesis} and some new results.  A short note on rational homological dimension, taken from the original version of this paper, can be found here \cite{Hughes2023hd}.  A number of group presentations, and results regarding residual finiteness, and autostackability will only exist in the thesis version.  Finally, we remark the results here have been used in \cite{Hughes2021b} to construct a lattice (and in fact a hierarchically hyperbolic group) in a product of trees which is not virtually torsion-free.  Finally, the author and M.~Valiunas used many of the results here to construct a non-biautomatic group acting geometrically on $\RH^2\times\calt_{26}$ in \cite{HughesValiunas2022,HughesValiunas2023}.

\section{Preliminaries}
In this section we recount some of the relevant background from the literature we need.  In \Cref{sec:prelims:covol}, we recount some basic definitions about lattices and Serre's covolume formula.  In \Cref{sec:prelims:nonpos}, we recount some of the work of Caprace and Monod on lattices and $\CAT(0)$ spaces.  In \Cref{sec.irr} we give an extended discussion about the definition of an irreducible lattice in a product of $\CAT(0)$ spaces.

\subsection{Lattices and covolumes}\label{sec:prelims:covol}
Let $H$ be a locally compact topological group with right invariant Haar measure $\mu$.  A discrete subgroup $\Gamma\leq H$ is a \emph{lattice} if the covolume $\mu(H/\Gamma)$ is finite.  A lattice is \emph{uniform} if $H/\Gamma$ is compact and \emph{non-uniform} otherwise.  Let $S$ be a right $H$-set such that for all $s\in S$, the stabilisers $H_s$ are compact and open, then if $\Gamma\leq H$ is discrete the stabilisers are finite.

Let $X$ be a locally finite, connected, simply connected polyhedral complex. The group $H=\Aut(X)$ of polyhedral automorphisms of $X$ naturally has the structure of a locally compact topological group, where the topology is given by uniform convergence on compacta.

\begin{thm}[Serre's covolume formula \cite{Serre1971}]\label{SerresCovolumeFormula}
Let $X$ be a locally finite simply-connected polyhedral complex.  Let $\Gamma\leq H$ be a lattice with fundamental domain $\Delta$, then there is a nomalisation of the Harr measure $\mu$ on $H$, depending only on $X$, such that for each discrete subgroup $\Gamma<H$ we have
\begin{flalign*}&&\mu(H/\Gamma)=\Vol(X/\Gamma):=\sum_{v\in \Delta^{(0)}}\frac{1}{|\Gamma_v|}. &&\qed \end{flalign*}
\end{thm}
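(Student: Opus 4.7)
The approach I would take is to decompose both $\mu(H/\Gamma)$ and $\Vol(X/\Gamma)$ via the $H$-action on the vertex set $V := X^{(0)}$ and match contributions orbit by orbit. The crucial input is that local finiteness of $X$ forces every vertex stabiliser $H_v \leq H$ to be compact and open, so $\mu(H_v)$ is finite and positive, and $\Gamma_v = \Gamma \cap H_v$ is finite for every discrete subgroup $\Gamma \leq H$.

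First I would decompose $V = \bigsqcup_{i \in I} H\cdot w_i$ into $H$-orbits and observe that $c_i := \mu(H_{w_i})$ is an orbit invariant, since conjugate compact open subgroups have the same Haar measure. For each fixed $i$, the orbit-stabiliser bijection identifies $\Gamma\backslash (H\cdot w_i)$ with the double coset space $\Gamma\backslash H/H_{w_i}$ via $g w_i \mapsto \Gamma g H_{w_i}$, with $\Gamma_{gw_i} = \Gamma \cap g H_{w_i} g^{-1}$. The classical per-double-coset Haar measure computation
\begin{equation*}
\mu\bigl((\Gamma g H_{w_i})/\Gamma\bigr) = \frac{\mu(H_{w_i})}{|\Gamma \cap g H_{w_i} g^{-1}|}
\end{equation*}
then assembles, for each fixed orbit index $i$, into the single-orbit identity
\begin{equation*}
\mu(H/\Gamma) = c_i \sum_{v \in \Gamma\backslash(H\cdot w_i)} \frac{1}{|\Gamma_v|}.
\end{equation*}

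Rearranging as $\sum_{v \in \Gamma\backslash(H\cdot w_i)} 1/|\Gamma_v| = \mu(H/\Gamma)/c_i$ and summing over $i \in I$ yields $\Vol(X/\Gamma) = \mu(H/\Gamma) \cdot \sum_{i \in I} 1/c_i$. Since the index set $I$ and the values $c_i$ depend only on the combinatorics and $H$-orbit structure of $X$, the single scaling condition $\sum_{i \in I} 1/c_i = 1$ picks out a unique normalisation of $\mu$ that depends only on $X$ and produces the covolume formula simultaneously for every discrete $\Gamma \leq H$. The main obstacle is the per-double-coset measure computation itself, which requires both the compact-open hypothesis on the stabilisers and right-invariance of $\mu$; once this is in hand the remainder is combinatorial bookkeeping with orbits and stabilisers.
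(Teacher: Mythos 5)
The paper does not actually prove this statement: it is quoted from Serre's work and closed with a tombstone, so there is no internal argument to measure yours against. Your proposal is the standard proof of Serre's covolume formula (essentially the argument in Bass--Lubotzky, \emph{Tree Lattices}, Ch.~1) and it is correct in outline: local finiteness makes each $H_v$ compact and open, hence $\Gamma_v$ finite for discrete $\Gamma$; the per-double-coset computation $\mu\bigl((\Gamma g H_{w_i})/\Gamma\bigr)=\mu(H_{w_i})/|\Gamma\cap gH_{w_i}g^{-1}|$ summed over $\Gamma\backslash H/H_{w_i}\cong\Gamma\backslash(H\cdot w_i)$ gives your single-orbit identity; and imposing $\sum_{i\in I}1/c_i=1$, a condition indexed by $H\backslash X^{(0)}$ and hence depending only on $X$, turns the orbit-by-orbit identities into the stated formula for every discrete $\Gamma$ at once. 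Two points should be made explicit. First, the double-coset measure computation quietly uses unimodularity of $H$ (you translate $H_{w_i}$ on one side while quotienting by $\Gamma$ on the other); this is harmless because the hypothesis that $H$ admits a lattice forces $H$ to be unimodular, a fact the paper itself invokes for tree automorphism groups. Second, when $X^{(0)}$ has infinitely many $H$-orbits you must check that $\sum_{i\in I}1/c_i$ is finite before you can rescale it to $1$; this also follows from the assumed existence of a lattice $\Gamma_0$, since your single-orbit identity yields $\sum_{i\in I}1/c_i=\Vol(X/\Gamma_0)/\mu(H/\Gamma_0)<\infty$ whenever $\Vol(X/\Gamma_0)$ is finite. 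With those two remarks supplied, your argument is complete and is the intended one.
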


The automorphism group $T$ of a locally finite tree $\calt$ admits uniform lattices if and only if the group $T$ is unimodular (that is the left and right Haar measures coincide) \cite[Existence Theorem]{BassKulkarni1990}.  In this case we say $\calt$ is \emph{unimodular}.  Note that further existence results for non-uniform lattices can be found in \cite{BassLubotzkyBook}.

\subsection{Non-positive curvature}\label{sec:prelims:nonpos}
We will be primarily interested in lattices in the isometry groups of $\CAT(0)$ spaces, we will call these groups \emph{$\CAT(0)$ lattices} (note that a uniform $\CAT(0)$ lattice is a $\CAT(0)$ group).  We begin by recording several facts about the structure and isometry groups of general $\CAT(0)$ spaces.  The definitions and results here are largely due to Caprace and Monod \cite{CapraceMonod2009a,CapraceMonod2009b,CapraceMonod2019}.

An isometric action of a group $H$ on a $\CAT(0)$ space $X$ is {\it minimal} if there is no non-empty $H$-invariant closed convex subset $X'\subset X$, the space $X$ is {\it minimal} if $\Isom(X)$ acts minimally on $X$.  Note that by \cite[Proposition 1.5]{CapraceMonod2009a}, if $X$ is cocompact and geodesically complete, then it is minimal. 
The {\it amenable radical} of a locally compact group $H$ is the largest amenable normal subgroup.  We can now state Caprace and Monod's group and space decomposition theorems mentioned in the introduction.

\begin{thm}\emph{\cite[Theorem 1.6]{CapraceMonod2009a}}\label{thm.CM.Isom}
Let $X$ be a proper $\CAT(0)$ space with finite dimensional Tits' boundary and assume $\Isom(X)$ has no global fixed point in $\partial X$.  There is a canonical closed, convex, $\Isom(X)$-stable subset $X'\subseteq X$ such that $G=\Isom(X')$ has a finite index, open, characteristic subgroup $H\trianglelefteq G$ that admits a canonical decomposition
\[ H\cong \Isom(\EE^n)\times S_1\times\dots\times S_p \times D_1\times\dots\times D_q,\]
for some $n,p,q\geq 0$, where each $S_i$ is an almost connected simple Lie group with trivial centre and each $D_j$ is a totally disconnected irreducible group with trivial amenable radical.\qed
\end{thm}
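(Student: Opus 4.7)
\medskip

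The plan is to produce $X'$ canonically as a minimal invariant set, then factor it via the Euclidean de Rham plus irreducible product decompositions, and finally analyse the isometry group of each irreducible piece. First I would construct $X'$ as follows: consider the family $\mathcal{C}$ of non-empty closed convex $\Isom(X)$-stable subsets of $X$, ordered by inclusion. The hypothesis that $\Isom(X)$ has no fixed point at infinity should rule out the pathological situation in which intersections of descending chains escape to infinity: if the circumradii of such intersections were unbounded one could extract, using a Busemann function / centre-at-infinity construction à la Adams--Ballmann, a canonical $\Isom(X)$-fixed point in $\partial X$. Zorn's lemma then yields a minimal element, and uniqueness of such a minimal element (hence canonicity) follows because the intersection of two minimal invariant sets is either empty or another invariant convex subset, with the empty case again producing a point at infinity by an asymptotic centre argument on the displacement between them. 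Set $X' \subseteq X$ to be this minimal set and $G = \Isom(X')$.

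Next, because $X'$ is proper with finite dimensional Tits boundary and $G$-minimal, I would invoke a Euclidean de Rham splitting: $X'$ decomposes canonically and isometrically as $\EE^n \times Y$ where $Y$ has no Euclidean factor (this can be done via a spectral analysis of parallel flat subspaces, extracting the maximal Euclidean factor as the union of all flats in a given parallelism class). Canonicity upgrades this to an almost-direct product decomposition of $G$, with finite index subgroup $G_0$ preserving each factor and surjecting onto $\Isom(\EE^n) \times \Isom(Y)$ with finite kernel. An iterated application of the product decomposition theorem for CAT$(0)$ spaces (uniqueness of the irreducible factorisation) then writes $Y = Y_1 \times \cdots \times Y_m$ into irreducible minimal factors, producing a further finite index refinement $H \trianglelefteq G$ which is open, characteristic, and splits as $\Isom(\EE^n) \times \prod_i \Isom(Y_i)$.

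It remains to analyse each $\Isom(Y_i)$ and show it is either a centre-free almost connected simple Lie group or a totally disconnected irreducible group with trivial amenable radical. For triviality of the amenable radical I would argue by contradiction: any non-trivial closed amenable normal subgroup $N \trianglelefteq \Isom(Y_i)$ either fixes a non-trivial closed convex subset of $Y_i$ (contradicting minimality) or, by Adams--Ballmann applied to the $N$-action, fixes a point at infinity; that fixed set at infinity is $\Isom(Y_i)$-invariant and, combined with the absence of a Euclidean factor and irreducibility, contradicts minimality of $Y_i$. Then invoke Gleason--Yamabe to split $\Isom(Y_i)$ as an extension of a totally disconnected group by a connected Lie group $L_i$; irreducibility of $Y_i$ forces one of the two to be trivial. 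In the connected case one shows $L_i$ is semisimple (its solvable radical is amenable, hence trivial) and, using the classification of isometric actions of semisimple Lie groups on CAT$(0)$ spaces, that $Y_i$ is a symmetric space of non-compact type and $L_i = S_i$ is a simple Lie factor with trivial centre.

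The principal obstacle is this final dichotomy: ruling out mixed connected-plus-totally-disconnected behaviour on a single irreducible factor, and, when the connected part is present, forcing $Y_i$ to be an honest symmetric space rather than a more general CAT$(0)$ space merely admitting a large connected isometry group. This hinges on the de Rham-type rigidity results for semisimple Lie group actions on CAT$(0)$ spaces and on the fact that the centraliser of $L_i^\circ$ inside $\Isom(Y_i)$ must act trivially on the symmetric factor it produces, so that by irreducibility it is trivial. Everything else (finite index, openness, characteristic property, canonicity) is bookkeeping once the canonical choices of $X'$ and of the factorisations have been pinned down.
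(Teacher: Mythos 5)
First, a point of order: the paper does not prove this statement. It is Theorem~1.6 of Caprace--Monod, quoted verbatim with a citation and closed by \verb|\qed|, so there is no in-paper argument to compare against; your proposal has to be measured against the original proof in \cite{CapraceMonod2009a}.

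As an outline of that proof your sketch has the right overall shape (minimal invariant subset, Euclidean de~Rham factor plus irreducible product decomposition, then a connected versus totally disconnected dichotomy on each irreducible factor via the solution to Hilbert's fifth problem), but the canonicity step contains a genuine gap. You assert that two disjoint minimal closed convex $\Isom(X)$-invariant subsets would force a fixed point in $\partial X$ ``by an asymptotic centre argument on the displacement between them.'' This is false as stated: distinct minimal invariant subsets are in general \emph{parallel} copies of one another at bounded Hausdorff distance, and such a configuration produces no point at infinity, so your uniqueness argument collapses exactly in the cases where minimality fails to be unique. The mechanism Caprace--Monod actually use is that the union of all minimal invariant subsets is itself closed, convex and invariant and splits as $M\times C$ with $M$ minimal and $C$ a bounded $\CAT(0)$ space parametrising the parallel copies (boundedness coming from the no-fixed-point-at-infinity hypothesis); the circumcentre of $C$ then selects the canonical copy $X'$. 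Without something of this kind, ``canonical'' is not established. Beyond that, your closing paragraph correctly identifies, but does not supply, the genuinely hard content of the theorem: ruling out mixed connected-plus-totally-disconnected behaviour on a single irreducible factor, showing each totally disconnected factor has trivial amenable radical, and forcing an irreducible factor with a large connected isometry group to be an honest symmetric space of non-compact type. These rest on substantial inputs (Adams--Ballmann, the Gleason--Yamabe--Montgomery--Zippin theory, and the rigidity of minimal isometric actions of semisimple Lie groups on $\CAT(0)$ spaces) that the sketch gestures at rather than derives. As written the proposal is a plausible roadmap to the Caprace--Monod theorem, not a proof of it.
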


\begin{thm}\emph{\cite[Addendum~1.8]{CapraceMonod2009a}}
Let $X'$ and $H$ be as above, then
\[X'\cong \EE^n\times X_1\times\dots\times X_p\times Y_1\times\dots\times Y_q \]
where each $X_i$ is an irreducible symmetric space and each $Y_j$ is an irreducible minimal $\CAT(0)$-space.\qed
\end{thm}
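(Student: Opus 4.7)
The plan is to combine the group decomposition of Theorem~\ref{thm.CM.Isom} with an equivariant splitting theorem for minimal $\CAT(0)$ spaces, passing the product structure from the isometry group to the space. The starting point is the canonicity of $X'$ (which is minimal for the $H$-action by construction) and the direct product decomposition of $H$ recorded in the preceding theorem.

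First, I would isolate the Euclidean factor using a de Rham-type result: since $\Isom(\EE^n)\leq H$ contains a finite-index subgroup of translations acting by Clifford translations on $X'$, the set of axes of these translations spans a canonical flat $H$-invariant subspace isometric to $\EE^n$, and $X'$ splits orthogonally as $\EE^n\times Y'$ with the first factor $\Isom(\EE^n)$-invariant and the second factor fixed pointwise by the translations (this is the content of the Euclidean splitting theorem, e.g.\ as in Bridson–Haefliger II.6.15 combined with the minimality of $X'$).

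Next, I would iterate a splitting theorem of the following form: if two commuting closed subgroups $G_1,G_2$ of $\Isom(Z)$ each act minimally on the proper minimal $\CAT(0)$ space $Z$ without global fixed points at infinity, then $Z$ splits equivariantly as $Z_1\times Z_2$ with $G_i$ acting on $Z_i$. Applying this with $(G_1,G_2)=(S_i,\ S_1\times\dots\hat{S_i}\dots\times S_p\times D_1\times\dots\times D_q)$ (and then again inside the second factor) peels off a canonical $\CAT(0)$ factor $X_i$ on which $S_i$ acts minimally; an analogous peeling produces the $Y_j$ for each $D_j$. The identification of $X_i$ as an irreducible symmetric space of non-compact type then follows from the classical Karpelevich--Mostow theorem, which states that a minimal proper isometric action of an almost connected semisimple Lie group with trivial centre on a $\CAT(0)$ space realises its symmetric space as a closed convex subspace; irreducibility is inherited from the simplicity of $S_i$. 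The spaces $Y_j$ are minimal and irreducible by construction, since any further $D_j$-invariant splitting would yield a commuting decomposition of $D_j$, contradicting its irreducibility as a topological group.

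The main obstacle is verifying the hypotheses of the commuting-group splitting theorem at each stage, in particular that each individual factor of $H$ still acts minimally on the residual $\CAT(0)$ space after the previous factors have been peeled off, and that no new global fixed points at infinity appear during the induction. This is handled by using the canonical minimal invariant subset construction (the \emph{Min} operator) for each commuting subgroup, together with the fact that a non-trivial factor in the group decomposition of Theorem~\ref{thm.CM.Isom} cannot fix a point at infinity of the corresponding factor without violating either the irreducibility of a $D_j$ or the simplicity of an $S_i$. Uniqueness of the decomposition then follows from the uniqueness of the de Rham decomposition and the fact that the isometry group factors are recovered intrinsically from the space factors.
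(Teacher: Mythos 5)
This statement is not proved in the paper at all: it is quoted verbatim from Caprace--Monod \cite[Addendum~1.8]{CapraceMonod2009a} and closed with an end-of-proof symbol purely as an external citation, so there is no internal argument to compare your proposal against. That said, your outline does reconstruct the strategy of the cited source in broad strokes: pass from the group decomposition of Theorem~\ref{thm.CM.Isom} to a decomposition of $X'$ via an equivariant splitting theorem for product actions, identify the symmetric space factors by the $\CAT(0)$ analogue of the Karpelevich--Mostow theorem, and isolate the Euclidean factor through Clifford translations.

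Two points in your write-up would not survive scrutiny as stated. First, $\Isom(\EE^n)$ does not contain a finite-index subgroup of translations for $n\geq 2$: the translation subgroup $\RR^n$ is normal and cocompact, with quotient the infinite compact group $\OO(n)$. The Clifford-translation detection of the flat factor still works, but via this cocompact normal subgroup, not a finite-index one. Second, and more seriously, your commuting-group splitting lemma is internally inconsistent as formulated. If $Z=Z_1\times Z_2$ and $G_1$ acts only through the first factor, then $Z_1\times\{p\}$ is a closed convex $G_1$-invariant subset for every $p\in Z_2$, so $G_1$ cannot act minimally on $Z$ unless $Z_2$ is a point; hence the hypothesis that \emph{each} $G_i$ acts minimally on $Z$, combined with your intended conclusion, forces the splitting to be trivial. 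The correct statement (Monod's splitting theorem, which is what Caprace--Monod actually invoke) assumes that the \emph{product} $G_1\times G_2$ acts minimally and without a global fixed point at infinity, and concludes that $Z$ splits canonically and equivariantly. With that repair the inductive peeling you describe goes through, and the remaining identifications --- symmetric spaces for the $S_i$, irreducible minimal $\CAT(0)$ factors for the $D_j$ --- are as in the source.
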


\subsection{Irreducibility}\label{sec.irr}
Two notions of irreducibility will feature in this paper; for uniform $\CAT(0)$ lattices they are equivalent due to a theorem of Caprace--Monod \cite[Theorem~4.2]{CapraceMonod2009b}.   We remark that Caprace and Monod do not name the properties weakly and algebraically irreducible.  But as the discussion below highlights, the two properties are distinct and some care is needed in their definitions.  Indeed, the definition of weakly irreducible below is essentially extracted from the errata of \cite[Theorem~4.2]{CapraceMonod2009b}, see \cite[\S2 Theorem~4.2]{CapraceMonod2019}.

Let $X=\EE^n\times X_1\times\dots\times X_m$ be a product of irreducible proper $\CAT(0)$ spaces with each $X_i$ not quasi-isometric to $\EE^1$ and let $H= H_0\times H_1\times\dots\times H_m\coloneqq \Isom(\EE^n)\times\Isom(X_1)\times\dots\times\Isom(X_m)$, such that for each $i\geq 1$ the group $H_i$ is non-discrete, cocompact, and acting minimally on $X_i$.  Let $\Gamma$ be a lattice in $H$.

We have projections $\pi_i\colon \Gamma \to H_i$ for each factor $H_i$.  The Euclidean factor gives us two further projections, firstly, $\pi_{\Isom(\EE^n)}\colon \Gamma\to\Isom(\EE^n)\cong \RR^n\rtimes \OO(n)$ and secondly, $\pi_{\OO(n)}$ which is defined as the composition $\Gamma\to\Isom(\EE^n)\to\OO(n)$.

Suppose $n=0$, then we say $\Gamma$ is \emph{weakly irreducible} if the projection of $\Gamma$ to each proper subproduct $H_I\coloneqq \prod_{i\in I}H_i$ for each proper subset $I\subset\{1,\dots,m\}$ is non-discrete. 

Suppose $n=1$, then $\Gamma$ always virtually splits a direct product $\Gamma'\times \ZZ$ by \cite{CapraceMonod2019}. In this case we always define $\Gamma$ to be \emph{reducible}.  

Suppose $n\geq 2$.  Let $\ell$ be the maximal integer such that $\EE^n=\prod_{j=1}^\ell \EE^{k_j}$ with each $k_j\geq 1$ such that the product decomposition is preserved by \emph{some} finite index subgroup $\Lambda$ of $\Gamma$.  Observe that $\Lambda$ is contained in $\prod_{j=1}^\ell\Isom(\EE^{k_j})\times\prod_{i=1}^m H_i$.  Denote each $\Isom(\EE^{k_j})$ by $E_j$ and the corresponding orthogonal group by $O_j$.  Then for $\Gamma$ to be \emph{weakly irreducible} we require that each $k_j\geq2$, and that the projection $\pi_{I,J}$ of $\Lambda$ to each proper subproduct, $G_{I,J}\coloneqq \prod_{j\in J}E_j\times\prod_{i\in I}H_i$ for $I\subseteq\{1,\dots,m\}$ and $J\subseteq\{1,\dots,\ell\}$, of $H$ is non-discrete (here at least one of $I$ or $J$ is a proper subset).

We say $\Gamma$ is \emph{algebraically irreducible} if $\Gamma$ has no finite index subgroup splitting as the direct product of two infinite groups.

For every uniform lattice we consider in this paper the two definitions will be equivalent by \cite[Theorem~4.2]{CapraceMonod2009b}; so in this case we will simply refer to a lattice as \emph{irreducible} or \emph{reducible}.

\begin{remark} We record the following remarks concerning these definitions.
\begin{enumerate}
\item The complexity of the definition in the presence of the Euclidean factor is an artefact of the examples found by Leary--Minasyan \cite{LearyMinasyan2019}.  Here the authors constructed weakly irreducible $(\Isom(\EE^n)\times T)$-lattices (where $T$ is the automorphism group of some locally-finite tree), we will denote one of these by $\LM(A)$.  The necessity of the decomposition $\prod_j E_j$ is because $\Gamma=\LM(A)\times\LM(A)$ has non-discrete projections to any subproduct of $\Isom(\EE^{2n})\times T\times T$ but is clearly algebraically reducible.  However, $\Gamma<\Isom(\EE^{n})\times\Isom(\EE^n)\times T\times T$ and here $\Gamma$ has discrete projections to proper subproducts.  

%\item The reason we consider projections to subproducts of $H_i$ and $\OO(k_j)$ instead of the whole of $\Isom(\EE^{k_j})$ is as follows:  If $\Gamma$ projects non-discretely to some $\Isom(\EE^{k_j})$ but has a discrete projection into the corresponding $\OO(k_j)$, then $\Gamma$ virtually splits as $\Gamma'\times\ZZ$ with $\Gamma'$ infinite.  Indeed, in this case, the image of $\Gamma$ in $\OO(k_j)$ is finite, so a finite index subgroup of $\Gamma$ acts on some $\EE^{k_j}$ by translations.  In particular, this finite index subgroup fixes a point in $\partial X$, and so by the discussion after \cite[Theorem~2]{CapraceMonod2019}, $\Gamma$ splits as $\Gamma'\times\ZZ$.

\item The necessity that $k_j\geq2$ is because if some $k_j=1$, then $\pi_{\OO(n)}(\Gamma)\leq\OO(1)\cong\ZZ_2$ and so by the previous remark $\Gamma$ virtually splits as $\Gamma'\times\ZZ$.
\end{enumerate}\end{remark}

With these definitions in hand we are able to use the following result of Caprace--Monod.

\begin{thm}\emph{\cite[Theorem 4.2]{CapraceMonod2009b}}\label{thm.CMirrCrit}
Let $X$ be a proper $\CAT(0)$ space, $H<\Isom(X)$ a closed subgroup acting cocompactly on $X$, and $\Gamma<H$ a finitely generated lattice.
\begin{enumerate}
    \item If $\Gamma$ is algebraically irreducible, then for any finite index subgroup $\Gamma_0<\Gamma$ and any $\Gamma_0$-equivariant splitting $X=X_1\times X_2$ with $X_1$ and $X_2$ non-compact, the projection of $\Gamma_0$ to both $\Isom(X_1)$ and $\Isom(X_2)$ is non-discrete.
    \item If in addition the $H$-action is minimal, then the converse holds.\qed
\end{enumerate}
\end{thm}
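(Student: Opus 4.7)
I would prove the two implications separately, starting with the contrapositive of Part~(1). Assume that some finite-index subgroup $\Gamma_0 \leq \Gamma$ admits a $\Gamma_0$-equivariant splitting $X = X_1 \times X_2$ with both factors non-compact such that the projection $p_1\colon \Gamma_0 \to \Isom(X_1)$ has discrete image; the goal is to show $\Gamma$ is reducible. Since $\Gamma_0$ acts cocompactly on $X$, the image $p_1(\Gamma_0)$ is a uniform lattice in $\Isom(X_1)$, and a fibrewise argument using a compact fundamental domain for $\Gamma_0$ on $X$ shows that $K := \ker(p_1)$ acts properly and cocompactly on $X_2$ via the faithful restriction $p_2|_K$ (faithfulness because an element of $K$ acting trivially on both $X_1$ and $X_2$ would be trivial). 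Hence $K$ is a uniform lattice in the closed subgroup $\overline{p_2(K)} \leq \Isom(X_2)$. Normality of $K$ in $\Gamma_0$ forces $p_2(\Gamma_0)$ to lie in the normalizer $N := N_{\Isom(X_2)}(K)$. The plan is then to invoke a rigidity input on normalizers of cocompact $\CAT(0)$ lattices: after first splitting off any Euclidean de~Rham factor of $X_2$ (where the centralizer of $K$ can be genuinely non-compact, as for $\ZZ^n \leq \Isom(\EE^n)$, and must be handled separately), a finite-index subgroup of $N/K$ lies inside a compact-by-abelian extension of the centralizer $C := C_{\Isom(X_2)}(K)$. After absorbing the (finite) center of $K$, this yields a finite-index subgroup of $\Gamma_0$ that splits virtually as $K' \times C'$ with both factors infinite (since $X_1, X_2$ are each non-compact), contradicting the abstract irreducibility of $\Gamma$.

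For the converse in Part~(2), suppose $\Gamma$ has a finite-index direct product subgroup $\Gamma_0 \cong \Gamma_1 \times \Gamma_2$ with both $\Gamma_i$ infinite, and assume the $H$-action on $X$ is minimal. I must produce a $\Gamma_0$-equivariant splitting of $X$ with non-compact factors on which each projection of $\Gamma_0$ is discrete. Since $\Gamma_1$ and $\Gamma_2$ are commuting groups of isometries of the proper $\CAT(0)$ space $X$, I would apply a $\CAT(0)$ splitting theorem in the spirit of Bridson--Haefliger II.6.21 or of Monod: passing to a closed convex minimal $\Gamma_0$-invariant subset $Y \subseteq X$ (which exists by cocompactness) and using that $\Gamma_2$ preserves $\Min(\Gamma_1) \cap Y$ because the two factors commute, one obtains a $\Gamma_0$-equivariant isometric product decomposition $Y = Y_1 \times Y_2$ in which $\Gamma_i$ preserves $Y_i$ and acts trivially on $Y_{3-i}$. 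Each $Y_i$ is non-compact because $\Gamma_i$ is infinite and acts properly on it. The remaining step is to propagate this splitting to all of $X$: here minimality of the $H$-action is essential, since it forces any closed convex $\Gamma_0$-invariant subset of $X$ to have an $H$-invariant translate-intersection equal to $X$ (by averaging over the finitely many cosets of $\Gamma_0$ in $\Gamma$ and then saturating under $H$ via the Caprace--Monod analysis). Hence $Y = X$ and the projections $p_i(\Gamma_0) = \Gamma_i$ are discrete, as required.

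The hardest step is the rigidity input in Part~(1): controlling $N_{\Isom(X_2)}(K)/K$ for a general cocompact $\CAT(0)$ lattice $K$. This forces one to first apply the Caprace--Monod decomposition of $\Isom(X_2)$ and handle the Euclidean, simple-Lie, and totally disconnected factors separately, since in the Euclidean case the centralizer of a cocompact lattice is genuinely non-compact while in the other factors cocompactness forces it to be compact. In Part~(2), the main subtlety is extending the splitting of the minimal invariant subset $Y$ to all of $X$; this is precisely the content the minimality hypothesis on the $H$-action supplies, and without it one only produces a splitting of $Y$ rather than of $X$ itself.
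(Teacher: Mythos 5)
The paper does not prove this statement: it is quoted verbatim from \cite[Theorem~4.2]{CapraceMonod2009b} and the \emph{qed} marks it as an external citation, so the only thing to compare your argument against is the cited source, not anything in this paper.

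Your Part~(1) starts along the right lines (a discrete projection $p_1(\Gamma_0)$ yields a normal subgroup $K=\ker p_1$ that is a uniform lattice in $\Isom(X_2)$, and one must then show the extension $1\to K\to\Gamma_0\to p_1(\Gamma_0)\to 1$ is virtually a direct product), but the step you yourself flag as ``the rigidity input'' is the entire content of the theorem and is only asserted. The claim that a finite-index subgroup of $N_{\Isom(X_2)}(K)/K$ lies in a compact-by-abelian extension of the centraliser of $K$ is not justified, and even granting it, it does not by itself produce a complement to $K$: you need both a virtual splitting of the extension and virtual triviality of the conjugation action of $p_1(\Gamma_0)$ on $K$, and neither is supplied. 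This is exactly where Caprace--Monod invest their structure theory (de~Rham decomposition, behaviour of normalisers and centralisers of minimal cocompact lattices in each factor), so as written the argument presupposes what it is meant to prove.

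Part~(2) contains an outright false intermediate claim. You assert that for $\Gamma_0=\Gamma_1\times\Gamma_2$ one obtains a $\Gamma_0$-equivariant splitting $Y=Y_1\times Y_2$ with $\Gamma_i$ acting trivially on $Y_{3-i}$. Take $X=\EE^2$ with $H=\Isom(\EE^2)$ (minimal and cocompact) and $\Gamma=\Gamma_1\times\Gamma_2$, where $\Gamma_1$ is generated by the translation by $(1,0)$ and $\Gamma_2$ by the translation by $(\alpha,1)$ with $\alpha\notin\QQ$. An isometric product decomposition of $\EE^2$ into two lines forces the factors to be orthogonal, and $(1,0)$ and $(\alpha,1)$ are not orthogonal, so no splitting exists in which each $\Gamma_i$ translates only its own factor; your route via $\Min(\Gamma_1)$ therefore does not produce the required decomposition. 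The theorem is nevertheless true here: the coordinate splitting $\RR(1,0)\times\RR(0,1)$ is $\Gamma_0$-equivariant and the second projection of $\Gamma_0$ is the discrete group $\ZZ$. Note that the conclusion of Part~(1), and hence the contrapositive needed for Part~(2), only requires \emph{some} splitting with \emph{at least one} discrete projection, not an adapted splitting with both projections discrete; your argument aims at the stronger, generally unattainable, conclusion. Relatedly, minimality of the $H$-action is not used to force $Y=X$ in the way you describe; in the source it enters through the analysis of closed convex invariant subsets and of the closures of the two factors in $\Isom(X)$.
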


 Finally, we restate a result of Caprace--Monod which we can use as criterion to determine non-residual finiteness of lattices in products.

\begin{thm}\emph{\cite[Theorem 4.10]{CapraceMonod2009b}}\label{thm.CM.nonresfin}
Let $X$ be a proper $\CAT(0)$ space such that $G=\Isom(X)$ acts cocompactly and minimally.  Let $\Gamma<\Isom(X)$ be a finitely generated algebraically irreducible lattice.  Let $\Gamma'=\Gamma\cap H$, where $H$ is given in Theorem~\ref{thm.CM.Isom}.  If the projection of $\Gamma'$ to an irreducible factor of $X$ has non-trivial kernel, then $\Gamma$ is not residually finite. \hfill$\qed$
\end{thm}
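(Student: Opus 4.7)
The plan is to exploit the direct product decomposition of $H$ from Theorem~\ref{thm.CM.Isom} together with algebraic irreducibility, in the spirit of a commensurator-density argument: one shows that the kernel $N$ of the projection to a factor is a normal subgroup of $\Gamma'$ which is ``stirred'' by conjugation so effectively that no proper finite-index subgroup of $N$ can be normal in $\Gamma'$.

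First, I would fix the decomposition $H \cong \Isom(\EE^n) \times S_1 \times \dots \times S_p \times D_1 \times \dots \times D_q$ and let $\pi_i \colon H \to H_i$ denote the projection to one of the (non-Euclidean) irreducible factors for which $N := \Ker(\pi_i|_{\Gamma'})$ is assumed non-trivial. Since $\Gamma'$ has finite index in $\Gamma$, it too is algebraically irreducible and uniform. Clearly $N$ is a normal subgroup of $\Gamma'$, and through the complementary projection $\pi_i^c$ it embeds as a discrete subgroup of $\prod_{j\ne i} H_j$.

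Next, I would use Theorem~\ref{thm.CMirrCrit}: algebraic irreducibility plus minimality and cocompactness of the $H$-action imply that the projection of $\Gamma'$ to every proper subproduct (and in particular to $H_i$) is non-discrete. Combined with the cocompactness of $\Gamma'$ and the structure of $H_i$ (almost connected simple Lie group with trivial center, or totally disconnected with trivial amenable radical), a Borel-density/Howe–Moore style argument gives that $\pi_i(\Gamma')$ is \emph{dense} in $H_i$. Thus, conjugation of $N$ by $\Gamma'$ realises the action of a dense subgroup of $H_i$ on $\pi_i^c(N) \leq \prod_{j\ne i}H_j$ (via the commuting factor structure, elements supported in $H_i$ centralise the image of $N$, which is what makes this work).

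Now suppose, toward a contradiction, that $\Gamma$ is residually finite; then so is $\Gamma'$. Pick $n \in N \setminus \{e\}$ and a finite-index normal subgroup $\Gamma_0 \trianglelefteq \Gamma'$ with $n \notin \Gamma_0$. The subgroup $M := N \cap \Gamma_0$ is of finite index in $N$ and is normalised by all of $\Gamma'$, hence by $\pi_i(\Gamma')$. The main step is to promote this invariance: since $\pi_i(\Gamma')$ is dense in $H_i$ and acts by automorphisms on the discrete group $N \cong \pi_i^c(N)$, continuity arguments (the automorphism group of a finitely generated discrete group, viewed inside a Polish group action, must factor through a discrete quotient of $H_i$; but $H_i$ has no non-trivial discrete quotient by simplicity/triviality of amenable radical) force the $\pi_i(\Gamma')$-action on $N/M$ to be trivial. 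This means $M$ is $\Gamma'$-invariant with $n \notin M$, yet $n \in N$, so the only way out is $M = N$, contradicting $n \notin \Gamma_0$.

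The main obstacle is the density step together with the continuity/rigidity argument of the last paragraph: one needs to know that $\pi_i(\Gamma')$ is genuinely dense (not merely non-discrete) and that any action of a dense subgroup of $H_i$ on a discrete group by automorphisms forced to stabilise a finite index subgroup must descend to the trivial action. In the Lie case this is classical, while in the totally disconnected case it rests on the structural results of \cite{CapraceMonod2009a} about topologically simple t.d.l.c.\ groups with trivial amenable radical, which is why the reduction to the subgroup $H$ of Theorem~\ref{thm.CM.Isom} is essential.
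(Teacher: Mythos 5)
First, note that the paper does not prove this statement at all: it is quoted verbatim from Caprace--Monod (the terminal $\qed$ in the statement marks it as a cited result, as with Theorems~\ref{thm.CM.Isom} and~\ref{thm.CMirrCrit}), so there is no in-paper proof to compare against and your attempt has to be judged against the known argument.

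Your proposal has a genuine gap at its core, namely a mix-up between the two complementary projections. The kernel $N=\Ker(\pi_i|_{\Gamma'})$ lives in $\{1\}\times\prod_{j\neq i}H_j$, so conjugation of $N$ by $\gamma=(\gamma_i,\gamma_{i^c})\in\Gamma'$ depends only on $\gamma_{i^c}$: the factor $H_i$ \emph{centralises} $N$, exactly as you observe parenthetically. Consequently the conjugation action of $\Gamma'$ on $N$ is controlled by $\overline{\pi_{i^c}(\Gamma')}\leq\prod_{j\neq i}H_j$, and the density (or otherwise) of $\pi_i(\Gamma')$ in $H_i$ is irrelevant to how $N$ is ``stirred'' --- a dense subgroup of a group that centralises $N$ acts trivially, so the engine you propose produces nothing. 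Two further steps fail independently. (i) Density of $\pi_i(\Gamma')$ in $H_i$ is not available: Theorem~\ref{thm.CMirrCrit} gives only \emph{non-discreteness}, there is no Howe--Moore phenomenon for the totally disconnected factors $D_j$, and the closure of the projection is typically a proper subgroup of $H_i$; if you instead work with that closure, your appeal to ``no non-trivial discrete quotient by simplicity'' collapses, since such closures routinely have proper open finite-index normal subgroups (e.g.\ type-preserving subgroups of tree automorphism groups). (ii) The endgame is a non sequitur: even granting that the induced action on $N/M$ is trivial, the conclusion ``the only way out is $M=N$'' does not follow --- a proper finite-index subgroup $M=N\cap\Gamma_0$ that is normal in $\Gamma'$ and on whose quotient $\Gamma'$ acts trivially is in no way contradictory, so no contradiction with $n\notin\Gamma_0$ is ever reached. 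The actual arguments (Burger--Mozes for products of trees, Caprace--Monod in general) must show that $N$ is infinite (via triviality of the amenable radical) and then that $N$ is contained in \emph{every} finite-index subgroup of $\Gamma'$; this rests on structural results about the closures of the projections of finite-index subgroups (normality and cocompactness of those closures, and the absence of proper open normal subgroups after passing to a canonical part), which is precisely the content your sketch gestures at but does not supply.
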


\section{Graphs of lattices}\label{sec.gol}

In \Cref{sec.gol:gog} we will review Bass--Serre theory and tree lattices.   In \Cref{sec:gol:structure} we define a graph of lattices and prove the structure theorem for $(H\times T)$-lattices.  In \Cref{sec:gol:reducible}, we prove a number of structural properties about reducible lattices and give some criteria for a lattice to be irreducible.  Finally, in \Cref{sec:gol:examples} we provide a couple of useful constructions and examples.  In particular, we compare our construction here to the universal covering trick of Burger and Mozes \cite[\S1.8]{BurgerMozes2000struct} and Caprace and Monod \cite[\S6.C]{CapraceMonod2009b}.  We also construct irreducible $(\Isom(\EE^n)\times T_{10n})$-lattices for all $n\geq 2$.

\subsection{Graphs of groups}\label{sec.gol:gog}
We shall state some of the definitions and results of Bass--Serre theory. In particular, the action will be on the right.  We follow the treatment of Bass \cite{Bass1993}.  Throughout a graph $A=(VA,EA,\iota,\tau)$ should be understood as it is defined by Serre \cite{SerreTrees}, with edges in oriented pairs indicated by $e$ and $\overline{e}$, and maps $\iota(e)$ and $\tau(e)$ from each edge to its initial and terminal vertices.  We will, however, often talk about the geometric realisation of a graph as a metric space.  In this case the graph should be assumed to be simplicial (possibly after subdividing) and should have exactly one undirected edge $e$ for each pair $(e,\overline{e})$.  We will often not distinguish between the combinatorial and metric notions.

A \emph{graph of groups} $(A,\cala)$ consists of a graph $A$ together with some extra data $\cala=(V\cala,E\cala,\Phi\cala)$.  This data consists of \emph{vertex groups} $A_v\in V\cala$ for each vertex $v$, \emph{edge groups} $A_e = A_{\overline{e}}\in E\cala$ for each (oriented) edge $e$, and monomorphisms $(\alpha_{e}:A_e \rightarrow A_{\iota(e)})\in\Phi$ for every oriented edge in $A$.  We will often refer to the vertex and edge groups as \emph{local groups} and the monomorphisms as \emph{structure maps}.

The \emph{path group} $\pi(\cala)$ has generators the vertex groups $A_v$ and elements $t_e$ for each edge $e\in EA$ along with the relations:
\[
    \left\{\begin{array}{c} 
        \text{The relations in the groups }A_v,\\
 t_{\overline{e}}=t_{e}^{-1},\\
 t_e\alpha_{\overline{e}}(g)t_e^{-1}=\alpha_e(g) \text{ for all } e\in EA \text{ and } g\in A_e=A_{\overline{e}}.
     \end{array}\right\}
\]

We will often abuse notation and write $\cala$ for a graph of groups.  The \emph{fundamental group of a graph of groups} can be defined in two ways.  Firstly, considering reduced loops based at a vertex $v$ in the graph of groups, in this case the fundamental group is denoted $\pi_1(\cala,v)$ (see \cite[Definition~1.15]{Bass1993}).  Secondly, with respect to a maximal or spanning tree of the graph.  Let $X$ be a spanning tree for $A$, we define $\pi_1(\cala,X)$ to be the group generated by the vertex groups $A_v$ and elements $t_e$ for each edge $e\in EA$ with the relations:
\[
    \left\{\begin{array}{c} 
    \text{The relations in the groups }A_v,\\
    t_{\overline{e}}=t_e^{-1}\text{ for each (oriented) edge }e,\\
    t_e\alpha_{\overline{e}}(g)t_e^{-1}=\alpha_{e}(g)\text{ for all }g\in A_e,\\
    t_e=1\text{ if }e\text{ is an edge in }X.
    \end{array}\right\}
\]
Note that the definitions are independent of the choice of basepoint $v$ and spanning tree $X$ and both definitions yield isomorphic groups so we can talk about \emph{the fundamental group} of $\cala$, denoted $\pi_1(\cala)$.

Let $G$ be the fundamental group corresponding to the spanning tree $X$. For every vertex $v$ and edge $e$, $A_v$ and $A_e$ can be identified with their images in $G$.  We define a tree with vertices the disjoint union of all coset spaces $G/A_v$ and edges the disjoint union of all coset spaces $G/A_e$ respectively.  We call this graph the \emph{Bass--Serre tree} of $\cala$ and note that the action of $G$ admits $X$ as a fundamental domain.

Given a group $G$ acting on a tree $\calt$, there is a \emph{quotient graph of groups} formed by taking the quotient graph from the action and assigning edge and vertex groups as the stabilisers of a representative of each orbit. Edge monomorphisms are then the inclusions, after conjugating appropriately if incompatible representatives were chosen.

\begin{thm}\emph{\cite{Bass1993}}
Up to isomorphism of the structures concerned, the processes of constructing the quotient graph of groups, and of constructing the fundamental group and Bass--Serre tree are mutually inverse.\qed
\end{thm}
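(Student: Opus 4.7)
The plan is to unpack both directions of the correspondence and verify that they compose to the identity on the relevant structures. Starting with a graph of groups $(A,\cala)$ and a spanning tree $X\subseteq A$, form $G=\pi_1(\cala,X)$ and let $\calt$ be the graph with vertex set $\bigsqcup_{v\in VA}G/A_v$, edge set $\bigsqcup_{e\in EA}G/A_e$, and boundary maps induced by the structure maps $\alpha_e$ and the stable letters $t_e$. I would first record the obvious formal checks: the boundary maps are well-defined because $\alpha_e(A_e)\leq A_{\iota(e)}$, the relation $t_{\overline e}=t_e^{-1}$ produces a consistent orientation reversal, and the diagonal $G$-action by left translation is well-defined.

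Next, I would verify that $\calt$ is a tree. Connectivity is straightforward: every generator of $G$ either lies in some $A_v$ (and so fixes the vertex $A_v\in\calt$) or is a stable letter $t_e$ (which moves along the edge $A_e$); combined with connectivity of $X$, one reaches any vertex from $A_{v_0}$ by a finite path. The more delicate point is acyclicity, which amounts to the \emph{normal form theorem} for $\pi_1(\cala,X)$: every element of $G$ has a unique reduced expression $g_0t_{e_1}g_1\cdots t_{e_n}g_n$ where $(e_1,\dots,e_n)$ is a reduced edge path and each intermediate $g_i$ is in a prescribed set of coset representatives. A closed non-backtracking loop in $\calt$ starting at $A_{v_0}$ would produce an element acting trivially on $A_{v_0}$ but with a non-trivial reduced expression, contradicting uniqueness. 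I would prove this normal form by induction on the number of edges not in $X$, decomposing $G$ as an iterated amalgamated product along the edges of $X$ and an iterated HNN extension along the remaining edges, and quoting the standard normal forms for those constructions. Once $\calt$ is a tree, reading off the quotient graph of groups recovers $(A,\cala)$: the stabiliser of the basepoint $A_v$ is exactly $A_v$, and the stabiliser of $A_e$ is $\alpha_e(A_e)$, with the coset representatives $t_e$ providing the identifications along the non-tree edges.

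For the converse direction, assume $G$ acts on a tree $\calt$ admissibly without inversions. I would pick a fundamental domain $Y\subseteq\calt$ consisting of a lift $X$ of a spanning tree of $\calt/G$ together with one representative edge for each orbit not contained in $X$. Setting $A_{\bar v}:=G_v$ and $A_{\bar e}:=G_e$ for these chosen lifts, and recording, for each non-tree edge $\bar e$ with lift $e\in Y\setminus X$, a Bass-Serre element $g_e\in G$ carrying $\tau(e)$ back to the corresponding vertex of $X$, produces a quotient graph of groups $(A,\cala)$ together with a natural homomorphism $\Phi:\pi_1(\cala,X)\to G$ sending each vertex and edge group to its defining subgroup and each stable letter $t_e$ to $g_e$. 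Surjectivity of $\Phi$ follows from connectivity of $\calt$ and the description of $Y$ as a fundamental domain; injectivity is again the normal form theorem, since a non-trivial element in the kernel would produce a closed reduced edge path in $\calt$, which is impossible. The isomorphism of Bass-Serre trees is then given by the equivariant map $gA_v\mapsto g\cdot\widetilde v$, $gA_e\mapsto g\cdot\widetilde e$.

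The central technical obstacle is clearly the normal form theorem, which serves double duty: it guarantees that the object assembled from cosets really is a tree in the first direction, and it guarantees injectivity of the comparison map $\Phi$ in the second. Once that is established, the remaining compatibility verifications — that vertex groups, edge groups, structure maps, and $G$-actions match up under the two constructions — reduce to routine coset bookkeeping.
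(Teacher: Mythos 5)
The paper offers no proof of this statement---it is quoted directly from Bass \cite{Bass1993} with a \qed---so there is no internal argument to compare against; your outline reproduces the standard proof given there and in Serre's \emph{Trees}: the coset construction of $\calt$, connectivity from the generating set, acyclicity and injectivity of the comparison map $\Phi$ both resting on the normal form theorem, which you correctly isolate as the one genuine technical input. The sketch is correct as a blueprint (the only point worth tightening is that injectivity of $\Phi$ in the converse direction needs the fact that a reduced word moves the base vertex of $\calt$ along a reduced edge path, which is the tree-action counterpart of the normal form statement rather than the normal form for $\pi_1(\cala,X)$ itself).
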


Let $(A,\cala)$ and $(B,\calb)$ be graphs of groups.  A \emph{morphism of graphs of groups} $\phi:(A,\cala)\to (B,\calb)$ consists of:
\begin{enumerate}
    \item A graph morphism $f:A\to B$.
    \item Homomorphisms of local groups $\phi_v: A_v\to B_{f(v)}$ and $\phi_e=\phi_{\overline{e}}:A_e\to B_{f(e)}$.
    \item Elements $\gamma_v\in\pi_1(\calb,f(v))$ for each $v\in VA$ and $\gamma_e\in \pi(\calb)$ for each $e\in EA$ such that if $v=i(e)$ then
    \begin{itemize}
        \item $\delta_e:=\gamma_v^{-1}\gamma_e\in\calb_{f(v)}$;
        \item $\phi_a\circ\alpha_e=\Ad(\delta_e)\circ\alpha_{f(e)}\circ\phi_e$.
    \end{itemize}
\end{enumerate}

\begin{remark*}
    A specific instance of the morphism of graphs of groups will be of particular interest to us.  Let $(A,\cala)$ be a graph of groups and let $H$ be a group viewed as a graph of groups over a single vertex.  In this case a morphism $\phi\colon (A,\cala)\to H$ consists of:
    \begin{enumerate}
        \item Homomorphisms of local groups  $\phi_v: A_v\to H$ and $\phi_e=\phi_{\overline{e}}:A_e\to H$.
        \item Elements $h_v,h_e\in H$ for each $v\in VA$ and $e\in EA$ such that if $v=i(e)$, then 
        $\phi_a\circ\alpha_e=\Ad(h_v^{-1}h_e)\circ\phi_e$.
    \end{enumerate}
    Note that in this case, this is exactly the data required to define a homomorphism $\pi_1(\cala,v)\to H$.
\end{remark*}

\subsection{A Structure theorem}\label{sec:gol:structure}
In this section we will define a graph of lattices and prove the structure theorem for $(H\times T)$-lattices.

The \emph{commensurator of $A$ in $G$} is the subgroup 
\[\Comm_G(A)=\{g\in G\ |\ A\cap A^g \text{ has finite index in } A \text{ and } A^g\}.\]
We say two subgroups $A$ and $B$ of a group $G$ are \emph{commensurable in $G$} if $A\cap B$ has finite index in both $A$ and $B$.  We say $A$ and $B$ are \emph{widely commensurable in $G$} if there exist conjugates of $A$ and $B$ in $G$ that are commensurable in $G$.

\begin{defn}[Graph of lattices]\label{def.gol}
Let $H$ be a locally compact group with Haar measure $\mu$.  A \emph{graph of $H$-lattices} $(A,\cala,\psi)$ is a graph of groups $(A,\cala)$ equipped with a morphism $\psi:\cala\to H$ such that:
\begin{enumerate}
    \item For each $\sigma\in\cala$ the pair $(A_\sigma,\psi_\sigma)$ is covirtually an $H$-lattice.
    \item The local groups are commensurable in $\Gamma=\pi_1(\cala)$ and their images are commensurable in $H$.
    \item For each $e\in EA$ the element $t_e$ of the path group $\pi(\cala)$ is mapped under $\psi$ to an element of $\Comm_H(\psi_e(A_e))$.
\end{enumerate}
\end{defn}

\setcounter{thmx}{0}
\begin{thmx}[The Structure Theorem]\label{thm.structure}
Let $X$ be a proper $\CAT(0)$ space and let $H=\Isom(X)$ contain a uniform lattice.  Let $(A,\cala,\psi)$ be a graph of $H$-lattices with locally-finite Bass--Serre tree $\calt$, and fundamental group $\Gamma$.  Suppose $T=\Aut(\calt)$ is non-discrete and unimodular.
\begin{enumerate}
    \item Assume $A$ is finite.  If each local group $(A_\sigma,\psi|_{A_\sigma})$ is covirtually a uniform $H$-lattice, and the kernel $\Ker(\psi|_{A_\sigma})$ acts faithfully on $\calt$, then $\Gamma$ is a uniform $(H\times T)$-lattice and hence a $\CAT(0)$ group.  Conversely, if $\Lambda$ is a uniform $(H\times T)$-lattice, then $\Lambda$ splits as a finite graph of uniform $H$-lattices with Bass--Serre tree $\calt$. \label{thm.gol.uniform}
    \item Assume $X$ is a $\CAT(0)$ polyhedral complex.  Let $\mu$ be the normalised Haar measure on $H$.  If for each local group $A_\sigma$ the kernel $K_\sigma=\Ker(\psi|_{A_\sigma})$ acts faithfully on $\calt$ and the sum $\sum_{\sigma\in VA}\Covol(A_\sigma)/|K_\sigma|$ converges, then $\Gamma$ is a $(H\times T)$-lattice.  Conversely, if $\Lambda$ is a $(H\times T)$-lattice, then $\Lambda$ splits as a graph of $H$-lattices with Bass--Serre tree $\calt$. \label{thm.gol.nonuniform}
\end{enumerate}
\end{thmx}

We will divert the majority of the proof to the proof of Theorem~\ref{thm.col} due to the similarity of the theorem statement and arguments involved in the proof.  The difference arises from the fact that the category of graphs of groups is not equivalent to the category of $1$-complexes of groups (see \cite[Proposition~2.1]{Thomas2006}) due to a difference in morphisms.  We highlight the key differences below.

\begin{proof}
We first prove \eqref{thm.gol.uniform}.  The ``if direction" is the same as Theorem~\ref{thm.col}\eqref{thm.col.uniform}.  For the converse note that an $(H\times T)$-lattice $\Gamma$ splits as a graph of groups $(A,\cala)$ by the fundamental theorem of Bass--Serre theory and the projection to $H$ induces a morphism of graphs of groups $\pi_H:\cala\to H$.  The same argument as Theorem~\ref{thm.col}\eqref{thm.col.uniform} implies that the local groups are commensurable covirtually commensurable $H$-lattices.  In particular, the images of the elements $t_e\in\pi(\cala)$ for $e\in EA$ are contained in $\Comm_H(\pi_H(A_\sigma))$ for every local group $A_\sigma$. $\blackdiamond$

The proof of \eqref{thm.gol.nonuniform} is almost identical to \eqref{thm.gol.uniform} we will highlight the differences.  Since $X$ is a $\CAT(0)$ polyhedral complex, it follows that $X\times \calt$ is.  Now, we may apply Serre's Covolume Formula to $\Gamma=\pi_1(\cala)$.  Let $\Delta$ be a fundamental domain for $\Gamma$ acting on $X\times \calt$, then the covolume of $\Gamma$ may be computed as
\begin{align*}
\sum_{\sigma\in \Delta^{(0)}}\frac{1}{|\Gamma_\sigma|}&=\sum_{\sigma\in\pi_\calt(\Delta^{(0)})}\sum_{\tau\in\pi_\calt^{-1}(\sigma)}\frac{1}{|\Gamma_\tau|}\\
&=\sum_{\sigma\in\pi_\calt(\Delta^{(0)})}\frac{1}{|K_\sigma|}\sum_{\tau\in\pi_\calt^{-1}(\sigma)}\frac{|K_\sigma|}{|\Gamma_\tau|}\\
&=\sum_{\sigma\in\pi_\calt(\Delta^{(0)})}\frac{\Covol(\Gamma_\sigma)}{|K_\sigma|}.\end{align*}
Since $\pi_\calt(\Delta^{(0)})$ can be identified with $VA$ and the latter sum converges by assumption, it follows as before that $\Gamma$ acts faithfully properly discontinuously and isometrically with finite covolume on $X\times Y$.  For the converse we proceed as in Theorem~\ref{thm.col}\eqref{thm.col.nonuniform}. $\blackdiamond$
\end{proof}

\begin{corollary}\label{thm.gol.qi}
    Under the same hypotheses as \Cref{thm.structure}\eqref{thm.gol.uniform}, $\Gamma$ is quasi-isometric to $X\times \calt$.
\end{corollary}
\begin{proof}
    By \Cref{thm.structure}\eqref{thm.gol.uniform} $\Gamma$ acts properly discontinuously cocompactly on $X\times \calt$.  The result follows from the \v{S}varc-Milnor Lemma \cite[I.8.19]{BridsonHaefligerBook}.
\end{proof}

\subsection{Reducible lattices}\label{sec:gol:reducible}
Let $X$ be a proper minimal $\CAT(0)$ space and $H=\Isom(X)$.  Let $\mathcal{T}$ be a locally-finite leafless tree and $T=\Aut(\mathcal{T})$ be unimodular.  We will now characterise reducible uniform $(H\times T)$-lattices by both their projections to $H$ and $T$, and by the separability of the vertex stabilisers in the projection to $T$.  Moreover, if $H$ is linear, we will show that all such lattices are linear, and thus, residually finite.  We say that a subgroup $\Lambda\leq\Gamma$ is \textit{separable} if it is the intersection of finite-index subgroups of $\Gamma$, {\it virtually normal} if $\Lambda$ contains a finite index subgroup $N$ such that $N\trianglelefteq \Gamma$, and \textit{weakly separable} if it is the intersection of virtually normal subgroups of $\Gamma$.

\begin{prop}\label{thm.reducibility}
Let $X$ be a proper minimal irreducible $\CAT(0)$ space or $\EE^2$ and let $H=\Isom(X)$.  Let $\mathcal{T}$ be a locally-finite leafless tree and let $T=\Aut(\mathcal{T})$ be non-discrete and unimodular.  Let $\Gamma$ be a uniform $(H\times T)$-lattice equipped with projections $\pi_H$ and $\pi_T$ to $H$ and $T$ respectively, then the following are equivalent:
\begin{enumerate}
    \item $\pi_H(\Gamma)$ is an $H$-lattice; \label{thm.reducibility.1}
    \item $\pi_T(\Gamma)$ is a $T$-lattice; \label{thm.reducibility.2}
    \item For every vertex $v\in\mathcal{T}$, the projection of the vertex stabiliser $\pi_T(\Gamma_v)$ is separable in $\pi_T(\Gamma)$; \label{thm.reducibility.3}
    \item There is a vertex $v\in\mathcal{T}$ such that the projection of the vertex stabiliser $\pi_T(\Gamma_v)$ is weakly separable in $\pi_T(\Gamma)$; \label{thm.reducibility.4}
    \item $\Gamma$ is a reducible $(H\times T)$-lattice. \label{thm.reducibility.5}
\end{enumerate}
\end{prop}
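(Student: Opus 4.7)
The plan is to prove the cycle $(5) \Rightarrow (1) \Rightarrow (5)$, $(5) \Rightarrow (2) \Rightarrow (5)$, and then close the remaining loop via $(5) \Rightarrow (3) \Rightarrow (4) \Rightarrow (5)$. I would split the argument into a ``soft'' equivalence of $(1)$, $(2)$, $(5)$, which rests on Theorem~\ref{thm.CMirrCrit} together with the standard exact sequence of a lattice through a normal subgroup of a product group, and a harder equivalence with the separability conditions, which requires finer analysis of $\pi_T(\Gamma_v)$ inside $\pi_T(\Gamma)$.

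For $(5) \Rightarrow (1),(2)$ I would pass to a finite-index subgroup $\Gamma_0 = \Gamma_1 \times \Gamma_2$ with both factors infinite. Since $X$ is irreducible (or $X = \EE^2$, whose irreducible de~Rham factors are lines) and $\mathcal{T}$ carries no Euclidean factor, the de~Rham decomposition of $X \times \mathcal{T}$ forces each $\Gamma_i$ to virtually preserve one of $X$ or $\mathcal{T}$; consequently $\pi_T(\Gamma_1)$ and $\pi_H(\Gamma_2)$ are finite, and $\pi_H(\Gamma_0)$, $\pi_T(\Gamma_0)$ are commensurable with uniform $H$- and $T$-lattices respectively, so their finite-index supergroups $\pi_H(\Gamma)$, $\pi_T(\Gamma)$ are discrete. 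Conversely, $(1) \Rightarrow (5)$ and $(2) \Rightarrow (5)$ are contrapositives of Theorem~\ref{thm.CMirrCrit}(2) applied to the canonical $\Gamma$-equivariant splitting $X \times \mathcal{T}$: discreteness of one projection obstructs algebraic irreducibility.

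For $(3) \Rightarrow (4)$: every finite-index subgroup contains its normal core and is therefore virtually normal, so separable implies weakly separable. For $(5) \Rightarrow (3)$ I would use the reducible-case analysis above: $\pi_T(\Gamma_v)$ is a finite extension of the product of the finite groups $\pi_T(\Gamma_1)$ and $(\Gamma_2)_v$, hence itself finite. Separability of a finite subgroup of $\pi_T(\Gamma)$ then follows from residual finiteness of $\pi_T(\Gamma)$, which comes from the linearity/residual-finiteness statement for reducible lattices indicated in the paragraph preceding the proposition.

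The main obstacle is $(4) \Rightarrow (5)$, turning a profinite-algebraic statement about $\pi_T(\Gamma)$ into the geometric conclusion that $\Gamma$ is reducible. The plan is to argue by contradiction: assume $\Gamma$ is algebraically irreducible and $\pi_T(\Gamma_v)$ is weakly separable for some $v$. Cocompactness of $\pi_T(\Gamma)$ on $\mathcal{T}$ forces $\pi_T(\Gamma_v) \subsetneq \pi_T(\Gamma)$, so weak separability supplies a proper virtually normal $H_\alpha$ containing $\pi_T(\Gamma_v)$ and a proper finite-index normal subgroup $N \trianglelefteq \pi_T(\Gamma)$ with $K := N \cdot \pi_T(\Gamma_v) \subsetneq \pi_T(\Gamma)$ of finite index. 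This produces a non-trivial $\pi_T(\Gamma)$-equivariant colouring of the orbit $\pi_T(\Gamma) \cdot v \subseteq V\mathcal{T}$ by the finite set $\pi_T(\Gamma)/K$. One then combines density of $\pi_T(\Gamma)$ in $T$ (guaranteed by algebraic irreducibility via Theorem~\ref{thm.CMirrCrit}) with a Margulis-type normal-subgroup analysis for irreducible $\CAT(0)$ lattices to extract a contradiction. The crux is this interface between the profinite topology on the abstract group $\pi_T(\Gamma)$ and its ambient topology as a dense subgroup of $T$.
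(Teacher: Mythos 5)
Your treatment of the implications among \eqref{thm.reducibility.1}, \eqref{thm.reducibility.2} and \eqref{thm.reducibility.5}, of \eqref{thm.reducibility.3}$\Rightarrow$\eqref{thm.reducibility.4}, and of \eqref{thm.reducibility.5}$\Rightarrow$\eqref{thm.reducibility.3} is essentially sound, though it runs along a different track from the paper: you route everything through the Caprace--Monod splitting machinery (Theorem~\ref{thm.CMirrCrit} in both directions, plus a de~Rham analysis of a virtual product decomposition), whereas the paper proves \eqref{thm.reducibility.1}$\Leftrightarrow$\eqref{thm.reducibility.2} directly in the style of Burger--Mozes (closedness of $\Gamma\cdot T$, the normaliser of a cocompact tree lattice being discrete, and vertex stabilisers in $T$ being profinite) and then exhibits the finite-index direct factor $K\times\Gamma_T$ explicitly. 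Your route is workable but needs care when $X=\EE^2$, since the equivariant splitting furnished by a virtual decomposition $\Gamma_1\times\Gamma_2$ may cut the Euclidean factor into lines rather than respecting $X\times\mathcal{T}$; one must then invoke \cite[Theorem~2]{CapraceMonod2019} to rule out an irreducible factor acting on $\EE^1\times\mathcal{T}$. Also note that for \eqref{thm.reducibility.1}$\Rightarrow$\eqref{thm.reducibility.5} you want the contrapositive of part~(1) of Theorem~\ref{thm.CMirrCrit}, not part~(2).

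The genuine gap is \eqref{thm.reducibility.4}$\Rightarrow$\eqref{thm.reducibility.5}, which you correctly identify as the crux but do not prove. Your sketch already contains an error: a virtually normal subgroup $H_\alpha$ containing $\pi_T(\Gamma_v)$ only supplies $N\trianglelefteq\pi_T(\Gamma)$ of finite index \emph{in $H_\alpha$}, not in $\pi_T(\Gamma)$, so the subgroup $K=N\cdot\pi_T(\Gamma_v)$ need not have finite index and the proposed finite equivariant colouring of the orbit of $v$ does not exist. Moreover, algebraic irreducibility only gives non-discreteness of $\pi_T(\Gamma)$, not density in $T$, and there is no Margulis-type normal subgroup theorem available for the closure of $\pi_T(\Gamma)$ in this generality ($H$ is merely the isometry group of a proper minimal $\CAT(0)$ space). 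The paper closes this step by citing \cite[Corollary~30]{CapraceKrophollerReidWesolek2019}: weak separability of a commensurated subgroup such as $\pi_T(\Gamma_v)$ in a group acting cocompactly on a leafless tree (hence with no invariant proper subtree) forces $\pi_T(\Gamma)$ to be discrete, which is condition~\eqref{thm.reducibility.2} and hence \eqref{thm.reducibility.5}. Without that result, or a genuine substitute for it, the cycle of implications does not close.
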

\begin{proof}
First, we will show that \eqref{thm.reducibility.1} implies \eqref{thm.reducibility.2}, our proof for this case largely follows \cite[Proposition 1.2]{BurgerMozes2000lat}.  Assume $\pi_H(\Gamma)$ is an $H$-lattice, then $\Gamma\cdot T$ is closed and so $\Gamma\cap T$ is a uniform $T$-lattice.  Now, $\pi_T(\Gamma)$ normalises $\Gamma\cap T$ and hence by \cite[1.3.6]{BurgerMozes2000struct} is discrete.  Thus, $\pi_T(\Gamma)$ is discrete and so is a lattice in $T$.

Next, we will show that \eqref{thm.reducibility.2} implies \eqref{thm.reducibility.1}.  Assume $\pi_T(\Gamma)$ is a lattice in $T$ and consider the kernel $K$ of the action of $\Gamma$ on $\mathcal{T}$.  We will show that $K$ is a finite index subgroup of $\pi_H(\Gamma)$.  Assume that $K$ has infinite index, then $\pi_H(\Gamma)/K \leq \pi_T(\Gamma)$ is an infinite subgroup of the vertex stabiliser, a profinite group, and so cannot be discrete.  Thus, $K$ has finite index in $\pi_H(\Gamma)$.  Since $K$ acts trivially on $\mathcal{T}$ we see that $K=\Gamma\cap H$.  Since $\Gamma\cdot H$ is closed it follows $K$ is an $H$-lattice.  Thus, $\pi_H(\Gamma)$ is virtually a lattice in $H$ and therefore an $H$-lattice.

Clearly, \eqref{thm.reducibility.5} implies \eqref{thm.reducibility.1} and \eqref{thm.reducibility.2}.  We will now prove that \eqref{thm.reducibility.1} and \eqref{thm.reducibility.2} imply \eqref{thm.reducibility.5}.  By the previous paragraph we have $K\trianglelefteq\pi_H(\Gamma)$ finite index.  Let $\Gamma_T=\{\gamma \ |\ (e,\gamma)\in\Gamma\}$, we want to show that $\Gamma_T$ is a uniform $T$-lattice.  By \cite[Commensurability Theorem]{BassKulkarni1990} all uniform $T$-lattices are widely commensurable.  Thus, $\Gamma_T$ will be a finite index subgroup of $\pi_T(\Gamma)$. By the first paragraph we see $\Gamma_T$ is a uniform lattice.  Thus, $K\times\Gamma_T$ is a finite index subgroup of $\Gamma$ and so $\Gamma$ is reducible.

Now, evidently \eqref{thm.reducibility.3} implies \eqref{thm.reducibility.4}.  To see that \eqref{thm.reducibility.4} implies \eqref{thm.reducibility.5} we apply \cite[Corollary 30]{CapraceKrophollerReidWesolek2019} to $\pi_T(\Gamma)$, noting that a cocompact action on a leafless tree does not preserve any subtree, in particular, $\pi_T(\Gamma)$ is discrete.  Finally, we show that \eqref{thm.reducibility.5} implies \eqref{thm.reducibility.3}.  Observe that $\pi_T(\Gamma)$ is a virtually free $T$-lattice which splits as a finite graph of finite groups. Since $\pi_T(\Gamma)$ is a finite graph of finite groups, the vertex stabilisers are separable subgroups.
\end{proof}

One immediate consequence of the theorem is that we can determine whether a lattice is irreducible simply by considering the projections to either $H$ or $T$.  Also, note that if $H$ is non-discrete unimodular and the automorphism group of a leafless tree then we recover \cite[Proposition~1.2]{BurgerMozes2000lat} and and \cite[Corollary~32]{CapraceKrophollerReidWesolek2019}.

We also have the following observations about the linearity and residual finiteness of reducible lattices.

\begin{prop}\label{prop.red.linear}
With the same notation as \Cref{thm.reducibility}, assume $H$ is linear (or lattices in $H$ are residually finite).  If $\Gamma$ is a uniform reducible $(H\times T)$-lattice, then $\Gamma$ is linear (resp. residually finite).
\end{prop}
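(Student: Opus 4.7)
The plan is to bootstrap from the decomposition of reducible lattices obtained in (the proof of) Proposition~\ref{thm.reducibility}. Since $\Gamma$ is reducible, that proposition produces a finite-index subgroup of $\Gamma$ of the form $K\times\Gamma_T$, where $K=\Gamma\cap H$ is an $H$-lattice and $\Gamma_T$ is a uniform $T$-lattice. It therefore suffices to verify that each factor has the relevant property and that the property propagates to finite overgroups.

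I would then observe that $\Gamma_T$ is virtually free, and hence both linear and residually finite. Indeed, as a uniform lattice in $\Aut(\mathcal{T})$ for the locally-finite tree $\mathcal{T}$, it acts cocompactly on $\mathcal{T}$ with finite vertex stabilisers, so by Bass-Serre theory it splits as a finite graph of finite groups. For the other factor, in the linear case $K\leq H$ is automatically linear, while in the residually finite case $K$ is an $H$-lattice and is therefore residually finite by hypothesis. Since both linearity (over a fixed field) and residual finiteness are closed under finite direct products, $K\times\Gamma_T$ inherits the property in question.

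To finish, I would use the fact that both properties pass to finite extensions. The cleanest route is to replace $K\times\Gamma_T$ by its normal core $N\trianglelefteq\Gamma$, which still has finite index in $\Gamma$ and, as a subgroup of $K\times\Gamma_T$, is still linear (resp.\ residually finite). For residual finiteness one separates a nontrivial $g\in\Gamma$ from $1$ using either the finite quotient $\Gamma/N$ when $g\notin N$, or a $\Gamma$-conjugate-stable finite-index subgroup of $N$ avoiding $g$ when $g\in N$. For linearity one forms the representation of $\Gamma$ induced from a faithful finite-dimensional representation of $N$; since $N$ is normal in $\Gamma$, this induced representation is again faithful, yielding an embedding of $\Gamma$ into some $\GL_n$. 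The only genuine subtlety is this last step, and it is entirely standard; all the remaining ingredients are routine consequences of Proposition~\ref{thm.reducibility} and the Bass-Serre structure of $T$-lattices.
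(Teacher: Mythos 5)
Your proposal is correct and follows essentially the same route as the paper: both use the virtual direct product decomposition $K\times\Gamma_T$ from Proposition~\ref{thm.reducibility}, note that $\Gamma_T$ is virtually free and $K$ is linear (resp.\ residually finite), and conclude by closure of these properties under finite direct products and finite-index overgroups. The paper states this in two sentences and leaves the standard ``virtually $P$ implies $P$'' step implicit, whereas you spell it out via normal cores and induced representations; that is a faithful expansion rather than a different argument.
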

\begin{proof}
If $\Gamma$ is reducible, then $\Gamma$ is virtually a direct product of a linear (resp. residually finite) group with a virtually free group.  In particular, $\Gamma$ is virtually a direct product of linear (resp. residually finite) groups and therefore linear (resp. residually finite).
\end{proof}

\begin{corollary}\label{cor.trichotemy}
With the same notation as \Cref{thm.reducibility}, assume $H$ is linear.  If $\Gamma$ is a finitely generated uniform $(H\times T)$-lattice, then exactly one of the following holds:
\begin{enumerate}
    \item $\Gamma$ is reducible and therefore linear (hence residually finite);
    \item $\Gamma$ is irreducible and linear (hence residually finite);
    \item $\Gamma$ is irreducible and non-residually finite.
\end{enumerate}
Moreover, if $H$ is a connected centre-free simple linear algebraic group without compact factors and $\Gamma$ is irreducible and linear, then $\Gamma$ is arithmetic and just-infinite.
\end{corollary}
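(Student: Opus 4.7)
My plan is to deduce the trichotomy directly from the preceding proposition together with Malcev's theorem on the residual finiteness of finitely generated linear groups. I would first verify pairwise exclusivity: case~(1) is separated from~(2)--(3) by reducibility, and~(2) is separated from~(3) because every finitely generated linear group is residually finite. For exhaustiveness, if $\Gamma$ is reducible then the preceding proposition realises $\Gamma$ as virtually the direct product of a linear $H$-lattice with a virtually free $T$-lattice, so $\Gamma$ is linear and hence residually finite by Malcev, yielding case~(1). If $\Gamma$ is irreducible, Malcev again ensures that linearity implies residual finiteness, so $\Gamma$ is either in case~(2) (linear, a fortiori residually finite) or in case~(3) (non-residually finite, a fortiori non-linear).

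For the moreover clause, the strategy is to combine density of $\pi_H(\Gamma)$ with the Margulis commensurator criterion for arithmeticity. First, Proposition~\ref{thm.reducibility} together with irreducibility says that $\pi_H(\Gamma)$ is not a lattice in $H$. I would then upgrade this non-discreteness to topological density by exploiting that $H$ is a connected centre-free simple Lie group: the identity component of the closure of $\pi_H(\Gamma)$ is a closed connected subgroup of $H$ normalised by a suitably large subset and so, by simplicity of $H$, is either trivial (ruled out by non-discreteness of $\pi_H(\Gamma)$) or all of $H$. Second, $\Gamma\cap H$ is discrete in $H$ and is commensurated by $\pi_H(\Gamma)$, so $\Comm_H(\Gamma\cap H)$ is dense in $H$. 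Invoking the Margulis commensurator criterion (using linearity of $\Gamma$) then yields that $\Gamma$ is arithmetic, and applying the Margulis normal subgroup theorem to the irreducible arithmetic lattice $\Gamma\subset H\times T$ gives just-infiniteness.

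The main obstacle I foresee is the arithmeticity step: $\Gamma$ is a lattice in the product $H\times T$ rather than in $H$ alone, so the classical formulations of Margulis' theorems must be ported carefully to this hybrid Lie--tree setting. In particular, $\Gamma\cap H$ need not itself be a lattice in $H$, so one cannot feed it into the commensurator criterion directly; instead one must frame the criterion in terms of $\Gamma$ inside $H\times T$ and exploit the tree factor together with the linearity hypothesis on $\Gamma$ to navigate the rank restrictions inherent in Margulis' arithmeticity theorem.
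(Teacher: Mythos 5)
There are two genuine gaps. First, your trichotomy argument establishes mutual exclusivity but not exhaustiveness. In the irreducible case you assert that $\Gamma$ is ``either linear or non-residually finite'', but Malcev only gives the implication linear $\Rightarrow$ residually finite; it does not exclude the fourth possibility that $\Gamma$ is irreducible, non-linear, and residually finite. The paper closes this by examining the kernels of the projections: if $\pi_H$ is injective then $\pi_H$ is a faithful linear representation of $\Gamma$ (since $H$ is linear) and we are in case~(2); if $\pi_H$ (equivalently, by the argument in the paper, $\pi_T$) has non-trivial kernel, then the Caprace--Monod criterion (Theorem~\ref{thm.CM.nonresfin}) forces $\Gamma$ to be non-residually finite, placing us in case~(3). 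Without invoking Theorem~\ref{thm.CM.nonresfin} or some substitute, the dichotomy you rely on is simply unproved.

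Second, for the ``moreover'' clause your plan rests on the classical Margulis commensurator criterion and normal subgroup theorem, and you yourself identify the obstruction: $\Gamma$ is a lattice in $H\times T$ with $T$ totally disconnected, $\Gamma\cap H$ need not be a lattice in $H$, and the classical statements do not apply as written in this hybrid Lie--tree setting. You flag this as the main obstacle but do not overcome it, so the argument remains a sketch at the crucial step. The paper sidesteps the issue by appealing to the generalisations designed for lattices in products of locally compact groups: just-infiniteness follows from the Bader--Shalom normal subgroup theorem \cite{BaderShalom2006} applied to the closure of $\Gamma$ in $H\times T$, and arithmeticity follows from Bader--Furman--Sauer \cite{BaderFurmanSauer2019}. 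If you wish to retain a Margulis-style argument, the density and commensurator analysis must be carried out in $H\times T$ rather than in $H$ alone, which is precisely the content of those references.
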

\begin{proof}
If $\Gamma$ is reducible, then $\Gamma$ is linear by \Cref{prop.red.linear}.   Now, assume $\Gamma$ is irreducible and $\pi_H(\Gamma)$ is injective, then $\pi_H$ is a faithful linear representation of $\Gamma$ and we are in the second case.  Since $\Gamma$ is linear, $\pi_T$ must be injective otherwise $\Gamma$ would contradict Theorem~\ref{thm.CM.nonresfin}.  Now, if either of $\pi_T$ or $\pi_H$ are not injective, then by Theorem~\ref{thm.CM.nonresfin} we see that $\Gamma$ is not residually finite.  Note that $\pi_T$ not being injective necessarily implies that $\pi_H$ is not injective because otherwise $\Gamma$ would admit a faithful linear representation, contradicting non-residual finiteness. To prove the `moreover' note that $\Gamma$ is just-infinite follows from the Bader--Shalom Normal Subgroup Theorem \cite{BaderShalom2006} applied to the closure of $\Gamma$ in $H\times T$.  The arithmeticity of $\Gamma$ follows from \cite{BaderFurmanSauer2019}.
\end{proof}

For a group $\Gamma$, let $b_1(\Gamma)$ denote its \emph{first Betti number}, that is, $b_1(\Gamma)=\rank_\ZZ (\Gamma^{\mathrm{ab}})$. Let $vb_1(\Gamma)$ denote the \emph{first virtual Betti number} of $\Gamma$ which is defined as
\[vb_1(\Gamma)=\sup\{b_1(\Lambda)\ |\ \Lambda \text{ has finite index in } \Gamma\} \in \ZZ_{\geq 0}\cup\{\infty\}.\]

\begin{prop}
With the same notation as \Cref{thm.reducibility}, assume $H$ is a connected centre-free semisimple linear algebraic group without compact factors.  Let $\Gamma$ be a finitely generated uniform irreducible $(H\times T)$-lattice. If $vb_1(\Gamma)>0$, then $\Gamma$ is not residually finite.  In particular, if $b_1(\calt/\Gamma)>0$, then $\Gamma$ is not residually finite.
\end{prop}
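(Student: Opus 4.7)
The plan is a contradiction argument. Suppose $\Gamma$ is residually finite. Because $\Gamma$ is irreducible, Theorem~\ref{thm.CM.nonresfin} applied to each irreducible factor of $X \times \calt$ forces both $\pi_H$ and $\pi_T$ to have trivial kernel on the finite-index subgroup $\Gamma' = \Gamma \cap H$ from the statement of that theorem. In particular $\pi_H$ identifies $\Gamma'$ with a linear group, putting $\Gamma$ into the second (linear) case of the trichotomy of Corollary~\ref{cor.trichotemy}.

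Now replay the just-infinite argument used in the proof of Corollary~\ref{cor.trichotemy}: by Theorem~\ref{thm.CMirrCrit} the projections of $\Gamma'$ are non-discrete in their respective factors, so $\Gamma'$ embeds as an irreducible cocompact lattice in $\overline{\pi_H(\Gamma')} \times \overline{\pi_T(\Gamma')}$, a product of two non-discrete compactly generated locally compact groups. The Bader--Shalom Normal Subgroup Theorem (as invoked in the proof of Corollary~\ref{cor.trichotemy}) then implies that $\Gamma'$ is just-infinite, so $b_1(\Gamma') = 0$; otherwise the kernel of a surjection $\Gamma' \twoheadrightarrow \ZZ$ would be a non-trivial normal subgroup of infinite index. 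The same reasoning applies verbatim to every finite-index subgroup $\Gamma_0 \leq \Gamma$, since such $\Gamma_0$ is again a finitely generated uniform irreducible $(H \times T)$-lattice and remains residually finite; hence $vb_1(\Gamma) = 0$, contradicting the hypothesis.

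For the ``in particular'' clause, the admissible action of $\Gamma$ on $\calt$ realises $\Gamma$ as the fundamental group of a graph of groups with underlying graph $\calt/\Gamma$. Collapsing all vertex groups yields a surjection $\Gamma \twoheadrightarrow \pi_1(\calt/\Gamma)$ onto a free group of rank $b_1(\calt/\Gamma)$, so $b_1(\Gamma) \geq b_1(\calt/\Gamma) > 0$ and the first half of the proposition applies.

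The main obstacle is verifying that the Bader--Shalom machinery, invoked in the proof of Corollary~\ref{cor.trichotemy} only in the case that $H$ is simple, carries over to the semisimple setting. Concretely, when $H = S_1 \times \cdots \times S_k$, one must check that the irreducibility of $\Gamma$ together with Theorem~\ref{thm.CMirrCrit} still realises $\Gamma$ as an irreducible cocompact lattice in a product of two non-discrete compactly generated locally compact groups satisfying the hypotheses of the Normal Subgroup Theorem; the rest of the argument is then purely formal.
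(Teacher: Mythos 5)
Your proof is correct and takes essentially the same route as the paper, which simply cites Corollary~\ref{cor.trichotemy} (residual finiteness plus irreducibility forces linearity and, via Bader--Shalom, just-infiniteness of each finite-index subgroup, incompatible with $vb_1(\Gamma)>0$) and obtains the ``in particular'' clause from the surjection $\Gamma\twoheadrightarrow\pi_1(\calt/\Gamma)\cong F_k$ exactly as you do. The simple-versus-semisimple caveat you flag is genuine but is equally present in the paper's own proof, which invokes the ``moreover'' of that corollary even though it is stated there only for $H$ simple.
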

\begin{proof}
Since $\Gamma$ is irreducible, by \Cref{cor.trichotemy}, either $\Gamma$ is linear and just-infinite, or $\Gamma$ is not residually finite.  Now, if the virtual Betti number of $\Gamma$ is greater than zero, then a finite index subgroup $\Gamma'$ of $\Gamma$ admits $\ZZ$ as a quotient and so cannot be just infinite.  Hence, $\Gamma'$ is not residually finite and so neither is $\Gamma$.  

The quotient space $\calt/\Gamma$ gives rise to a graph of groups splitting of $\Gamma$ with Bass--Serre tree $\calt$.  An easy application of the $\Gamma$-equivariant Mayer-Vietoris sequence (see \cite[Chapter~IV.9]{Brown1982} for the details) applied to $\calt$ shows that $b_1(\Gamma)\geq b_1(\calt/\Gamma)$.  Alternatively note that $\Gamma\twoheadrightarrow\pi_1(\cala)\cong F_k$ where $\cala$ is the geometric realisation of the graph $\calt/\Gamma$ which is homotopy equivalent to a wedge of $k$ copies of $S^1$.
\end{proof}

\subsection{Examples}\label{sec:gol:examples}

\begin{example}[Change of tree]
Let $T_{k,\ell}$ denote the $(k,\ell)$-biregular tree.  Given an edge transitive but not vertex transitive irreducible $(H\times T_{k,\ell})$-lattice $\Gamma$ one may construct a non-residually finite irreducible $(H\times T_{mk,n\ell})$-lattice for all $m,n\geq 2$ as follows:

Firstly, note $\Gamma$ splits as a graph of $H$-lattices.  Indeed, $\Gamma=A\ast_C B$ where $A$, $B$ and $C$ are covirtually $H$-lattices.  We may assume that $A$ stabilises a vertex of valence $k$ and $B$ stabilises a vertex of valence $\ell$.  Let $N_A$ and $N_B$ be finite groups of order $m$ and $n$ respectively and pick split extensions $\widetilde{A}=N_A\rtimes A$ and $\widetilde{B}=N_B\rtimes B$.  We may construct a graph of lattices by considering the graph of groups corresponding to $\widetilde{A}\ast_C \widetilde{B}$.  The representations of $\widetilde{A}$ and $\widetilde{B}$ are the given by the composites $\widetilde{A}\twoheadrightarrow A\to H$ and $\widetilde{B}\twoheadrightarrow B\to H$.  The resulting fundamental group $\widetilde{\Gamma}$ acts on the $(mk,n\ell)$-regular tree.  Moreover, the lattice is irreducible by Proposition~\ref{thm.reducibility} and the non-discreteness of the projection to $H$.
\end{example}

We will compare the notion of a graph of lattices with the ``universal covering trick" of Burger--Mozes \cite[Section~1.8]{BurgerMozes2000struct} and generalised by Caprace--Monod \cite[Section~6.C]{CapraceMonod2009b}.  In particular, we will show how in many cases one can obtain a graph of lattices from the universal covering trick.  We take the opportunity to point out that many of the groups constructed in the previous sections cannot be obtained from universal covering trick.

\begin{example}[The universal covering trick]
Let $A$ be the geometric realisation of a locally finite graph (not reduced to a single point) and let $Q<\Isom(A)$ be a vertex transitive closed subgroup.  Let $C$ be an infinite profinite group acting level transitively on a locally finite rooted tree $\calt_0$.  Let $B$ be the $1$-skeleton of the square complex $A\times\calt_0$ and let $\calt$ be the universal cover.  Define $D$ to be the extension $1\rightarrowtail\pi_1(B)\rightarrowtail D\twoheadrightarrow C\times Q\twoheadrightarrow 1$.  By \cite[Proposition~6.8]{CapraceMonod2009b}, there exists a $\CAT(0)$ space $Y$ such that $D\rightarrowtail\Isom(Y)$ is a closed subgroup, and $D$ acts cocompactly and minimally without fixed point at infinity.

The classical situation where this is applied is as follows:  Let $Q$ be a product of $p$-adic Lie groups, $H$ be a product of real Lie groups and $\Gamma<H\times Q$ to be an $S$-arithmetic irreducible lattice.  Let $A$ be the $1$-skeleton of the Bruhat-Tit's building for $Q$, let $\calt$ be the universal cover of $A$ and let $T=\Aut(\calt)$.  Now, $\Gamma$ lifts to a weakly irreducible lattice $\widetilde\Gamma <H\times Q\times T$ and the corresponding graph of lattices is obtained by considering the graph $A/\Gamma$ equipped with local groups given by the stabilisers of the action of $\Gamma$ on $A$.
\end{example}

As an brief application we will construct (virtually) torsion-free irreducible $(\Isom(\EE^n)\times T_{10n})$-lattices.  Note that for $n$ odd these constitute the first explicit examples of such irreducible lattices.

\begin{example}\label{ex.LM.odddim.lat}
Recall from \cite[Example~9.2]{LearyMinasyan2019} the Leary-Minasyan group $\LM(A)$ where $A$ is the matrix corresponding to the Pythagorean triple $(3,4,5)$ which acts on $\EE^2\times\calt_{10}$. (Note that these groups were classified up to isomorphism by Valiunas \cite{Valiunas2020}.)  By \cite[Example~9.4]{LearyMinasyan2019}, this has presentation \[\LM(A)=\langle a,b,t\ |\ [a,b],\ ta^2b^{-1}t^{-1}=a^2b,\ tab^2t^{-1}=a^{-1}b^2\rangle.\] 
Using this group we will construct a virtually torsion-free irreducible $(\Isom(\EE^n)\times T)$-lattice where $T$ is the automorphism group of the $10n$-regular tree for all $n\geq 3$.

Let $\ZZ^n=\langle a_0,\dots,a_{n-1}\rangle$ and let $F=\langle f\rangle$ be a cyclic group of order $n$ acting on $L$ by cyclically permuting the $a_i$.  Let $L=\ZZ^n\rtimes F$, this is a crystallographic group and so embeds into $\Isom(\EE^n)$.  Now, consider the $(n\times n)$-matrix $B$ given by
\[B=\begin{bmatrix}
A & \mathbf{0}\\
\mathbf{0} & I_{n-2}
\end{bmatrix}.\]
We define $\Gamma_n$ to be the HNN extension of $L$ by the matrix $B$, the Bass--Serre tree of this HNN extension will be regular of valence $10n$.  This has generators $a_0,\dots,a_{n-1},f,t$ and relations
\[f^n=1,\ [a_i,a_j]=1,\ fa_if^{-1}=a_{i+1\pmod{n}},\ [a_2,t]=1,\ \dots,\ [a_{n-1},t]=1,\] \[ta_0^2a_1^{-1}t^{-1}=a_0^2a_1,\ ta_0a_1^2t^{-1}=a_0^{-1}a_1^2,\]
where $i,j\in\{0,\dots,n-1\}$.  Here the first three sets of relation come from $L$, the relations $[a_i,t]=1$ for $i\geq 2$ come from the fact $B$ fixes $\{a_2,\dots,a_{n-1}\}$ point-wise, and the last two relations arise from the action of $B$ on $\langle a_0,a_1\rangle$.  Now, let $a:=a_0$, then we may write $\Gamma_n$ as
\[\Gamma_n= \langle a,f,t\ |\ f^n=1,\ ta^2a^{-f}t^{-1}=a^{2}a^f,\ ta(a^2)^ft^{-1}=a^{-1}(a^2)^f,\ [a^{f^i},a^{f^j}]=1\rangle \]
for $i,j\in\{0,\dots,n-1\}$.  Thus, $\Gamma_n$ is a $3$ generator, $\frac{1}{2}n(n-1)+3$ relator group.

To see $\Gamma_n$ is irreducible note that $\pi_{\OO(n)}(\Gamma)$ is not virtually contained in some $\prod_{j}\OO(k_j)<\OO(n)$.  Indeed, consider the subgroup generated by the $\pi_{\OO(n)}(f)$-orbit of $\pi_{\OO(n)}(t)$.  To show $\Gamma_n$ is virtually torsion-free note that every torsion element of $\Gamma_n$ has non-trivial image in $\pi_{\OO(n)}(\Gamma_n)$.  This is generated by the images of $f$ and $t$ and so is a finitely generated linear group and hence has a finite index torsion-free subgroup $P_n$.  The preimage of $P_n$ in $\Gamma_n$ is torsion-free.
\end{example}

\section{An application to \texorpdfstring{$C^\ast$}{C*}-simplicity}\label{sec.lat.Cstar}

In this section we will consider the $C^\ast$-simplicity of $(H\times T)$-lattices for various locally compact groups $H$.  In \Cref{sec:Cstar:criteria} we recount the relevant definitions and derive a number of sufficient conditions for an $(H\times T)$-lattice to be $C^\ast$-simple from work of Breuillard, Kalantar, Kennedy and Ozawa \cite{BreuillardKalantarKennedyOzawa2017}.  In \Cref{sec:Cstar:app}, we give a number of applications of the criteria.  In \Cref{sec:Cstar:faithful}, we prove that a uniform $(\Isom(\EE^n)\times T)$-lattice has a faithful action on its Bass--Serre tree if and only if it is irreducible (\Cref{prop.faithful.isomEn}).  We also give an analogous statement with $T$ replaced by the automorphism group of a locally finite $\CAT(0)$ polyhedral complex (\Cref{prop.faithdul.isomEn.X}).  Finally, in \Cref{sec:faithful:Euclid} we prove that a uniform $(\Isom(\EE^n)\times T)$-lattice is $C^\ast$-simple if and only if it is irreducible.

\subsection{Some conditions for \texorpdfstring{$C^\ast$}{C*}-simplicity}\label{sec:Cstar:criteria}

Let $\Gamma$ be a discrete group.  The \emph{reduced $C^\ast$-algebra} of $\Gamma$, denoted $C^\ast_r(\Gamma)$, is the norm closure of the algebra of bounded operators on $\ell^2(\Gamma)$ by the left regular representation of $\Gamma$.  We say $\Gamma$ is \emph{$C^\ast$-simple} if $C^\ast_r(\Gamma)$ has exactly two norm-closed two-sided ideals: $0$ and $C^\ast_r(\Gamma)$ itself.

The $C^\ast$-simplicity of graphs of groups has been considered before \cite{delaHarpeJeanPhillipe2011}, however, the methods developed there are not applicable to $(H\times T)$-lattices because the vertex and edge groups are all commensurable.  Instead, we will apply the machinery developed in \cite{BreuillardKalantarKennedyOzawa2017} to prove the $C^\ast$-simplicity of $(H\times T)$-lattices via properties of either $H$ or the action on $\calt$.

Let $\Gamma$ be a group.  We say a subgroup $H$ is \emph{normalish} if for every $n\geq 1$ and $t_1\dots,t_n$ the intersection $\bigcap_{i=1}^n H^{t_i}$ is infinite.  We say $\Gamma$ has the \emph{icc property} if every non-trivial conjugacy class of $\Gamma$ is infinite.

\begin{prop}\label{Prop.pnai.gog}
Let $\Gamma$ be the fundamental group of a (possibly infinite) graph of finite groups with leafless Bass--Serre tree $\calt$ not quasi-isometric to $\RR$.  If $\Gamma$ is infinite, not virtually cyclic and acts faithfully on $\calt$, then $\Gamma$ is $C^\ast$-simple.
\end{prop}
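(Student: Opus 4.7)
The plan is to verify the naive Powers property $\Pnaive$ for $\Gamma$, which classically implies $C^\ast$-simplicity by the theorem of Bekka, Cowling and de la Harpe. Recall that $\Pnaive$ asks: for every finite $F\subset\Gamma\setminus\{1\}$ there exists $g\in\Gamma$ such that $\langle f,g\rangle\cong\langle f\rangle\ast\langle g\rangle$ for every $f\in F$. This is traditionally established by a ping-pong argument on a suitable boundary, here $\partial\calt$.

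The first step is to show that the $\Gamma$-action on $\calt$ is non-elementary, meaning it has no fixed point in $\calt\cup\partial\calt$ and preserves no bi-infinite line in $\calt$. Using Serre's trichotomy for isometric group actions on trees, I would rule out three elementary cases in turn. First, $\Gamma$ cannot be elliptic, since then it would be contained in a (finite) vertex stabiliser, contradicting that $\Gamma$ is infinite. Second, $\Gamma$ cannot preserve a line $\ell\subset\calt$: by faithfulness, the kernel of the action on $\ell$ lies inside a vertex stabiliser, hence is finite, while the quotient embeds in $\mathrm{Isom}(\RR)$, so $\Gamma$ would be virtually cyclic, contrary to hypothesis. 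Third, $\Gamma$ cannot fix a single end $\xi\in\partial\calt$: a parabolic action would induce a Busemann homomorphism $\chi_\xi\colon\Gamma\to\ZZ$ with locally finite kernel acting cocompactly on each horosphere about $\xi$, but combined with the leafless hypothesis, the faithfulness, and the assumption that $\calt$ is not quasi-isometric to $\RR$, one can show this forces $\calt$ to be a line, a contradiction.

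With non-elementarity in hand, classical tree theory provides two independent hyperbolic elements and the density of attracting/repelling endpoint pairs of hyperbolic elements in $(\partial\calt\times\partial\calt)\setminus\Delta$. Given a finite $F\subset\Gamma\setminus\{1\}$, I would use this density to choose a hyperbolic element $g$ whose two boundary endpoints avoid the fixed set, in $\calt\cup\partial\calt$, of every $f\in F$: for elliptic $f$ the fixed set is a proper subtree of $\calt$, and for hyperbolic $f$ it is the two endpoints of its axis. Passing to a sufficiently large power $g^N$, a standard ping-pong argument on $\partial\calt$ then yields $\langle f,g^N\rangle\cong\langle f\rangle\ast\langle g^N\rangle$ for every $f\in F$, which is exactly $\Pnaive$ and completes the proof.

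The main obstacle I expect is the parabolic sub-case of the first step: ruling out that $\Gamma$ fixes a single end of $\calt$ is the only place where the full combination of hypotheses (leafless, not quasi-isometric to $\RR$, faithful, and all local groups finite) is genuinely needed, and where the specific structure of a graph of finite groups, as opposed to a general faithful action on a tree, enters the argument. The ping-pong step of the second half is by contrast a classical technique for non-elementary actions on trees and should proceed routinely once non-elementarity is secured.
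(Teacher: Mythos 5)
Your reduction to non-elementarity breaks down at exactly the point you flag, and the break is not repairable from the stated hypotheses: the fixed-end case genuinely occurs. Take the underlying graph to be a bi-infinite line with vertex groups $\dots,1,1,\sym(2),\sym(3),\sym(4),\dots$ and each edge group equal to the smaller of its two endpoint groups, included into the larger one in the standard way. The fundamental group is the finitary symmetric group $\Gamma=\bigcup_n\sym(n)$. Its Bass--Serre tree is leafless (the vertex over $\sym(n)$ has valence $1+[\sym(n):\sym(n-1)]$ and the vertices with trivial stabiliser have valence $2$), it has infinitely many ends and so is not quasi-isometric to $\RR$, and the action is faithful because the kernel is a normal subgroup of $\Gamma$ contained in the finite stabiliser $\sym(2)$, whereas the only nontrivial normal subgroups of $\Gamma$ are the infinite groups $A_\infty$ and $\Gamma$. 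This group is infinite, not virtually cyclic, and fixes an end of $\calt$; being locally finite it is amenable, hence certainly not $C^\ast$-simple. So your assertion that a fixed end ``forces $\calt$ to be a line'' is false, and no Busemann-homomorphism argument can rescue it. A smaller but real second issue sits in your ping-pong step even when the action is of general type: an elliptic $f$ in a graph of finite groups can fix a half-tree, so $\partial\Fix(f)$ need not be nowhere dense in $\partial\calt$ and finitely many such sets can cover the whole boundary; the condition you actually need is that every nontrivial power of $f$ moves the two endpoints of $g$, which requires a separate argument rather than a density count.

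For comparison, the paper does not attempt $\Pnaive$ or ping-pong at all. It appeals to the Breuillard--Kalantar--Kennedy--Ozawa criteria: either $\Gamma$ has positive first $L^2$-Betti number (computed from the equivariant chain complex of $\calt$ by pairing vertex orbits with edge orbits) and trivial amenable radical, or every normalish subgroup of $\Gamma$ contains a free group. Note, however, that both of those routes also implicitly presuppose that the action is of general type -- in the example above the pairing count is an $\infty-\infty$ expression and $\Gamma$ is itself a normalish amenable subgroup -- so the honest fix, for your approach and for the paper's, is an additional hypothesis guaranteeing a general-type action (for instance minimality, or the existence of a hyperbolic element together with no fixed end), which does hold in the paper's application to kernels of projections of finitely generated lattices. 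Under such a hypothesis your strategy is viable, provided you replace ``the endpoints of $g$ avoid $\Fix(f)$'' by the correct Powers-type displacement condition.
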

\begin{proof}
As $\Gamma$ is not finite or virtually cyclic $\Gamma$ has a positive (possibly infinite) first $L^2$-Betti number.  Indeed, the chain complex of the Bass--Serre tree $C_\ast(\calt;\calu \Gamma)$, which is concentrated in dimension $0$ and $1$, may be used to compute the $L^2$-homology.  As $\Gamma$ is infinite the boundary map is surjective and so the $L^2$-homology is concentrated in degree $1$.  We may pair each orbit of $0$-cells $v$ with an orbit of $0$-cells $e$ contained in the boundary of $e$, in each case the dimension of the $\calu \Gamma$-module is $1/|\Gamma_v|$ or $1/|\Gamma_e|$, and $1/|\Gamma_e|-1/|\Gamma_v|\geq 0$.  Since $\Gamma$ is non-trivial and not virtually cyclic some of these inequalities must be strict.  In particular, we conclude $\Gamma$ has a (possibly infinite) non-trivial first $L^2$-Betti number.  Since $\Gamma$ has a trivial amenable radical and a non-trivial $L^2$-Betti number we may apply \cite[Theorem~6.5]{BreuillardKalantarKennedyOzawa2017} to deduce that $\Gamma$ is $C^\ast$-simple.

Alternatively, we first note that any normalish subgroup of $\Gamma$ contains a free subgroup since $\Gamma$ is a faithful graph of finite groups and is not virtually cyclic.  Now, we apply \cite[Theorem~6.2]{BreuillardKalantarKennedyOzawa2017} to deduce that $\Gamma$ is $C^\ast$-simple.
\end{proof}

\begin{thm}\label{thm.cstar.main}
Let $X=X_1\times\dots\times X_k$ be a product of proper minimal cocompact $\CAT(0)$-spaces each not isometric to $\RR$ and let $H=\Isom(X_1)\times\dots\times\Isom(X_k)$ act without fixed point at infinity.  Let $\mathcal{T}$ be a locally-finite leafless tree and $T=\Aut(\mathcal{T})$ be non-discrete unimodular.  Let $n\geq0$ and $\Gamma<\Isom(\EE^n)\times H\times T$ be a uniform lattice. Assume $\Gamma$ is weakly irreducible.  If one of the following holds: 
    \begin{enumerate}
        \item $H$-lattices have no normalish amenable subgroups;\label{thm.cstar.2a}
        \item $\Ker(\pi_T)$ is trivial and $\Ker(\pi_{\Isom(\EE^n)\times H)})$ is infinite;\label{thm.cstar.2b}
        \item $H$-lattices have a non-zero $L^2$-Betti number and trivial amenable radical;\label{thm.cstar.2c}
    \end{enumerate}
    then $\Gamma$ is $C^\ast$-simple.
\end{thm}
\begin{proof}
We will show that \eqref{thm.cstar.2a} implies $C^\ast$-simplicity.  We first show that any amenable normalish subgroup $N$ of $\Gamma$ must fix a vertex of $\calt$.  Let $g\in\Gamma$ act as a hyperbolic element on $\calt$, choose any other element $h\in\Gamma$ acting hyperbolically on $\calt$ with an axis not equal to $g$, then any normalish subgroup $N$ containing $g$ contains the free group $\langle g,g^h\rangle$ and so cannot be amenable.  Thus, $N$ fixes a vertex of $\calt$.  Now, by Theorem~\ref{thm.structure} every vertex and edge stabiliser of $\Gamma$ is a finite-by-$\{H$-lattice$\}$ group.  Since by assumption $H$-lattices do not contain any normalish amenable subgroups, neither does $\Gamma$.  It remains to verify that $\Gamma$ has no finite normal subgroups, but $\Gamma$ has trivial amenable radical by \cite[Corollary~2.7]{CapraceMonod2009b}.  In particular the result now follows from \cite[Theorem~6.2]{BreuillardKalantarKennedyOzawa2017}.

We next prove \eqref{thm.cstar.2b} implies $C^\ast$-simplicity.  Let $K=\Ker(\pi_{\Isom(\EE^n)\times H)})$, we have that $\Gamma$ is an extension of $K$ by $\pi_{\Isom(\EE^n)\times H)}(\Gamma)$.  Now, $K$ is a (possibly infinite) graph of finite groups acting faithfully on $\calt$.  Indeed, restricting $\pi:=\pi_{\Isom(\EE^n)\times H}$ to a vertex stabiliser $\Gamma_v<\Gamma$ of the action on $\calt$, by Theorem~\ref{thm.structure} we see $\Ker(\pi|_{\Gamma_v})$ is finite.  Every finite subgroup of $\Gamma$, and hence $K$, is conjugate to a finite subgroup of some vertex stabiliser.  Thus, the graph of groups decomposition is given by $\calt/K$.  

We claim $K$ is not virtually infinite cyclic.  Indeed, if $K$ was virtually cyclic, then there exists a commensurated infinite cyclic subgroup $Z<K<\Gamma$.  By \cite[Theorem 2(i)]{CapraceMonod2019} $Z$ acts properly on $\EE^n$ in the decomposition of $X$.  But $Z<K$, a contradiction.

It follows the group $K$ is $C^\ast$-simple by Proposition~\ref{Prop.pnai.gog}. Because $\Ker(\pi_T)$ is trivial, every element acts non-trivially on $\calt$ and so the centraliser $C_\Gamma(K)$ is trivial.  Now, we apply \cite[Theorem~1.4]{BreuillardKalantarKennedyOzawa2017} to prove the result.

Finally, we will prove \eqref{thm.cstar.2c} implies $C^\ast$-simplicity.  We apply the cohomology $\Gamma$-equivariant Mayer-Vietoris sequence with $\calu\Gamma$ coefficients arising from filtering $E\Gamma$ by the Bass--Serre tree \cite[Chapter~VII.9]{Brown1982}.  Since $\calt$ is not a quasi-line there is a vertex $v$ connected to an edge $e$ such that the stabilisers satisfy $|\Gamma_v:\Gamma_e|\geq 3$, thus the $L^2$-Betti numbers of $\Gamma_e$ are at least $3$ times the $L^2$-Betti number of $\Gamma_v$. Now, additivity of von Neumann dimension over exact sequences and a simple counting argument implies every $(H\times T)$-lattice must have a non-trivial $L^2$-Betti number.  Alternatively, we note that every $(H\times T)$-lattice is measure equivalent to $L\times F_r$ where $L$ is an $H$-lattice and $F_r$ is a free group.  Now, an application of the Kunneth formula yields that $L\times F_r$ has a non-trivial $L^2$-Betti number and so by Gaboriau's theorem \cite[Theorem~6.3]{Gaboriau2002} so does every $(H\times T)$-lattice.  By \cite[Corollary~2.7]{CapraceMonod2009b} every $(H\times T)$-lattice has trivial amenable radical, the result follows from \cite[Theorem~6.5]{BreuillardKalantarKennedyOzawa2017}.
\end{proof}

\subsection{\texorpdfstring{$C^\ast$}{C*}-simplicity of \texorpdfstring{$(H\times T)$}{(HxT)}-lattices}\label{sec:Cstar:app}
We now present some applications of the criteria above.

\begin{corollary}\label{cor:Cstar:Lie}
    Let $\calt$ be a locally-finite leafless tree and $T=\Aut(\calt)$ be non-discrete unimodular. Let $H$ be a finite product of simple real compact Lie groups with trivial centre.  If $\Gamma$ is a uniform $(H\times T)$-lattice, then $\Gamma$ is $C^\ast$-simple.
\end{corollary}
 Note that this appears to be new whenever $\Gamma$ is not residually finite.
\begin{proof}
   By \cite[Theorem~1]{BekkaCowleydelaHarpe1994}, we see that $H$-lattices are $C^\ast$-simple.  By \cite[Theorem~6.8]{BreuillardKalantarKennedyOzawa2017}, we see they have no normalish amenable subgroups.  In the case that $\Gamma$ is weakly irreducible, the result now follows from \Cref{thm.cstar.main}\eqref{thm.cstar.2a}.
   
   If $\Gamma$ is reducible then we proceed by induction on the number of direct factors of $H=\prod_{i=1}^kH_i$.  If $k=1$, then $\Gamma$ virtually splits as $F_n\times\Gamma_H$.  The result follows from the following three observations \cite[Proposition 19 (i,iii,iv)]{delaharpe07}, a direct product of two $C^\ast$-simple groups is $C^\ast$-simple, finite index subgroups of $C^\ast$-simple groups are simple, and a virtually $C^\ast$-simple group is $C^\ast$-simple if and only if it satisfies the icc property.  Note that the icc property follows from the fact that $\Gamma$ has no virtually normal finitely generated abelian subgroups.  Hence, $C_\Gamma(g)$ has infinite index in $\Gamma$ for all elements $g$.  Thus, the conjugacy class of $g$ is infinite by the Orbit Stabiliser Theorem.
   
   We now suppose $k\geq 1$ and note that by induction the result holds for all lattices in $H_{\hat j}\times T$ where $H_{\hat j}=\prod_{i=1, i\neq j}^kH_i$ and $1\leq j\leq k$.  Since $\Gamma$ is weakly reducible, there is some $H_{i_1},\dots,H_{i_\ell}$ such that the projection $\pi_{i_1,\dots,i_\ell}\colon\Gamma \to \prod_{j=1}^\ell H_{j_i}$ has discrete image.  Let $K_1=\prod_{j=1}^\ell H_{j_i}$ and let $K_2$ denote the remaining factors of $H\times T$.  In particular, $\Gamma$ virtually splits as a direct product of $\Gamma\cap K_1$ and $\Gamma\cap K_2$.  Lattices in both of these groups are $C^\ast$-simple by induction.  Thus, we may conclude the proof by applying \cite[Proposition 19 (i,iii,iv)]{delaharpe07} as in the case $k=1$.
\end{proof}

\begin{remark}
    Essentially the same proof as above as above applies if $H$ is the automorphism group of a direct product of affine buildings with no factors equal to $\EE^n$.  One could also prove this using \Cref{thm.cstar.main}\eqref{thm.cstar.2c}, the $\ell^2$-cohomology computations in \cite[Theorem~1.6]{PetersonSauerThom2018}, Gaboriau's Theorem \cite{Gaboriau2002}, and an argument like in \cite{Hughes2023hd}.  We do not pursue this here.
\end{remark}

We call an axial isometry $g$ of a $\CAT(0)$ space $X$ \emph{rank-one} if there is an axis $L$ for $g$ which does not bound a flat half-plane.  Here by a \emph{flat half-plane} we mean a totally geodesic embedded isometric copy of an euclidean half-plane in $X$.

\begin{corollary}
    Let $\calt$ be a locally-finite leafless tree and $T=\Aut(\calt)$ be non-discrete unimodular.   Let $X$ be a proper minimal cocompact irreducible $\CAT(0)$ polyhedral complex not isometric to $\RR$. Let $H=\Aut(X)$ and assume every uniform lattice in $H$ admits a rank-one isometry.  If $\Gamma$ is a uniform $(H\times T)$-lattice, then $\Gamma$ is $C^\ast$-simple.
\end{corollary}

Note that this result appears to be new whenever $X$ is not a cubical complex.

\begin{proof}
    First, note that since a uniform $H$-lattice $\Lambda$ admits a rank-one isometry and is not virtually cyclic, it is acylindrically hyperbolic \cite[Theorems~1.3 and 1.4(i)]{Sisto2018} (see also \cite[Corollary 5.5]{PetytSprianoZalloum2024} for a cleaner statement).  Thus, by \cite[Theorem~0.2(5)]{AbbottDahmani2019}, $\Lambda$ is $C^\ast$-simple.  If $\Gamma$ is reducible, then $\Gamma$ virtually splits as $F_n\times\Gamma_H$.  The result follows from \cite[Proposition 19 (i,iii,iv)]{delaharpe07} as in \Cref{cor:Cstar:Lie}.  
    
    We now prove that $\Lambda$ has no normalish amenable subgroups.  By \cite[Discussion before Corollary~6.6]{BreuillardKalantarKennedyOzawa2017}, we have that $\Lambda$ is in the class $\calc_{\mathrm{reg}}$ and so has $H^2_b(\Lambda;\ell^2(\Lambda))\neq 0$. It follows from \cite[Proposition~6.4]{BreuillardKalantarKennedyOzawa2017}, that $\Lambda$ has no normalish amenable subgroups.  Now, if $\Gamma$ is an irreducible $(H\times T)$-lattice, then $\Gamma$ is $C^\ast$-simple by \Cref{thm.cstar.main}\eqref{thm.cstar.2a}.
\end{proof}

\subsection{Faithful actions of \texorpdfstring{$(\Isom(\EE^n)\times T)$}{Isom(En)xT)}-lattices}\label{sec:Cstar:faithful}

The following propositions give criteria for irreducibility in terms of the action of $\Gamma$ on $\calt$.

\begin{prop}\label{prop.faithful.isomEn}
Let $\calt$ be a locally finite leafless tree not quasi-isometric to $\RR$ and let $T=\Aut(\calt)$ be non-discrete unimodular.  Let $\Gamma$ be a uniform $(\Isom(\EE^n)\times T)$-lattice.  Then $\Gamma$ is irreducible if and only if $\Gamma$ acts on $\calt$ faithfully.
\end{prop}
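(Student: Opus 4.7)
Set $K := \Ker(\pi_T)$; since an element of $\Gamma\leq\Isom(\EE^n)\times T$ acts trivially on $\calt$ exactly when its $T$-coordinate is trivial, we have $K=\Gamma\cap\Isom(\EE^n)$, a discrete normal subgroup of $\Gamma$ sitting inside $\Isom(\EE^n)$.  Faithful action on $\calt$ is equivalent to $K=\{e\}$, so the plan is to prove each direction of the equivalence by controlling $K$.

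For the \emph{only if} direction, assuming $\Gamma$ is both weakly and algebraically irreducible, I will rule out $K$ being either finite and nontrivial, or infinite.  Since $\Isom(\EE^n)\times T$ acts cocompactly and minimally on the geodesically complete space $X=\EE^n\times\calt$, the uniform lattice $\Gamma$ also acts minimally; hence if $K$ is a finite normal subgroup then $\Fix(K)$ is $\Gamma$-invariant closed convex, forcing $\Fix(K)=X$ and $K=\{e\}$.  If instead $K$ is infinite, it is virtually $\ZZ^k$ for some $k\geq 1$, and its characteristic translation subgroup $K_0\cong\ZZ^k$ is normal in $\Gamma$.  Conjugation by $\Gamma$ permutes the translation vectors of $K_0$, so $\pi_{O(n)}(\Gamma)$ preserves their $\RR$-span $V\subseteq\RR^n$; writing $\RR^n=V\oplus W$ places $\Gamma$ inside $\Isom(V)\times\Isom(W)\times T$, with $K_0$ a uniform lattice in $\Isom(V)=\RR^k\rtimes O(V)$ contained in $\Gamma\cap\Isom(V)$.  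The standard product-lattice principle, that a uniform lattice $\Lambda\leq G_1\times G_2$ with $\Lambda\cap G_1$ a uniform lattice in $G_1$ has $\pi_2(\Lambda)$ a uniform lattice in $G_2$, then forces $\pi_{\Isom(W)\times T}(\Gamma)$ to be a discrete subgroup of $\Isom(W)\times T$; applying Caprace--Monod's criterion (Theorem~\ref{thm.CMirrCrit}) to the $\Gamma$-equivariant splitting $X=V\times(W\times\calt)$ then contradicts algebraic irreducibility.

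For the \emph{if} direction, assuming $K=\{e\}$, algebraic irreducibility is immediate from Proposition~\ref{thm.reducibility}: algebraic reducibility would force $K$ to be a uniform $\Isom(\EE^n)$-lattice, hence infinite for $n\geq 1$.  For weak irreducibility I will split into cases.  If some $k_j=1$, the remarks in Section~\ref{sec.irr} together with \cite[Theorem~2]{CapraceMonod2019} show $\Gamma$ is already algebraically reducible, reducing to the previous case.  If a proper subproduct $G_{I,J}=\prod_{j\in J}O(k_j)\times T$ (so $1\in I$) has discrete image under the natural projection, the kernel of this projection lies inside $K=\{e\}$, so the projection is injective; since discrete subgroups of a product of a compact group with $T$ project to discrete subgroups of $T$, this yields $\pi_T(\Gamma)$ discrete in $T$, whence algebraic reducibility by Proposition~\ref{thm.reducibility}.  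The remaining subcase is $I=\emptyset$: the image is finite and a finite-index subgroup of $\Gamma$ acts by translations on $\bigoplus_{j\in J}\EE^{k_j}$, and I will run a variant of the infinite-$K$ argument on the resulting normal translation subgroup, using the maximality of the decomposition $\EE^n=\prod_j\EE^{k_j}$ to produce a $\Gamma$-equivariant product splitting of $X$ with a discrete projection to one factor, contradicting algebraic irreducibility.

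The main obstacle is this last subcase: one must combine the translational reduction with the minimality of the $k_j$ to locate a $\Gamma$-equivariant direct summand on which the lattice projects discretely, so that Caprace--Monod's criterion can be invoked a second time.
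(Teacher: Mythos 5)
Your overall strategy is genuinely different from the paper's (which runs the forward direction through the finiteness of the amenable radical via \cite{CapraceMonod2019} and the converse through the canonical commensurated $\ZZ^n$ of \cite[Theorem~2]{CapraceMonod2019}), and most of it is sound: the dichotomy on $K=\Gamma\cap\Isom(\EE^n)$, the minimality argument killing a finite $K$, the product-lattice principle producing a discrete projection from an infinite $K$, and the reduction of the $I=\{1\}$ subcase to discreteness of $\pi_T(\Gamma)$ via compactness of $\prod_{j\in J}O(k_j)$ are all workable. Two caveats on the parts you treat as routine. First, an infinite discrete subgroup of $\Isom(\EE^n)$ need \emph{not} contain any nontrivial translations (consider a screw motion with irrational rotational part), so ``its characteristic translation subgroup $K_0\cong\ZZ^k$'' is not automatic; you must first use minimality of the $\Gamma$-action together with the Flat Torus Theorem to see that $\Min(K_1)=\EE^n$ for a finite-index free abelian $K_1\trianglelefteq\Gamma$ inside $K$, after which $K_1$ genuinely acts by translations and your argument resumes. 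Second, Proposition~\ref{thm.reducibility} is stated only for $X$ irreducible or $X=\EE^2$, so invoking it for $\Isom(\EE^n)$ with $n\geq 3$ requires either checking that its proof never uses irreducibility of $X$ (it does not, for the implications you need) or replacing the citation with a direct argument that a commuting infinite direct factor must fix a coarsely dense orbit in $\calt$ and hence lie in $K$.

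The genuine gap is the one you flag yourself: the subcase $I=\emptyset$, where $\pi_{O_J}(\Gamma)$ is finite for some $J$. The ``variant of the infinite-$K$ argument'' does not go through as described, because after passing to the finite-index subgroup $\Gamma_0$ acting by translations on $\bigoplus_{j\in J}\EE^{k_j}$ there is no normal \emph{discrete} translation subgroup to work with: the translation-vector map $\Gamma_0\to\RR^{d}$ is a homomorphism whose image need not be discrete, and its kernel is contained in $K=\{e\}$, so you cannot manufacture the $\ZZ^k\trianglelefteq\Gamma$ that drove the earlier splitting. The paper closes this case differently: a group acting by translations on a Euclidean factor fixes a point of $\partial X$, so by the discussion surrounding \cite[Theorem~2]{CapraceMonod2019} (as recorded in the remarks of Section~\ref{sec.irr}) $\Gamma$ virtually splits as $\Gamma'\times\ZZ$ with the $\ZZ$ factor acting trivially on $\calt$, directly contradicting faithfulness of the action. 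You should substitute that argument (or an equivalent one) for the unfinished ``variant''; as written, the proposal does not establish weak irreducibility in this final case.
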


We will also state and prove the analogous result for lattices in the product of the automorphism group of a $\CAT(0)$ polyhedral complex (with mild hypothesis) and $\Isom(\EE^n)$.  We will prove both results simultaneously.

\begin{prop}\label{prop.faithdul.isomEn.X}
Let $X$ be an irreducible locally finite $\CAT(0)$ polyhedral complex not quasi-isometric to $\RR$ and let $A=\Aut(X)$ act cocompactly and minimally.  Let $\Gamma$ be a uniform $(\Isom(\EE^n)\times A)$-lattice.  Then $\Gamma$ is irreducible if and only if $\Gamma$ acts on $X$ faithfully.
\end{prop}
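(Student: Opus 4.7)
The plan is to prove both directions by contrapositive. The main technical tool is the Splitting Theorem for $\CAT(0)$ groups with a normal free abelian subgroup \cite[Corollary~II.7.2]{BridsonHaefligerBook}, applied to a careful analysis of the kernel $K := \Ker(\pi_H|_\Gamma) \le \Isom(\EE^n)$, where $H = A$ (respectively $T$). Throughout, faithfulness of $\Gamma$ on $X$ (resp.\ on $\calt$) is equivalent to $K = 1$.

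For $(\Leftarrow)$ I would assume $K \neq 1$ and show $\Gamma$ is not weakly and algebraically irreducible. Since $K$ is discrete and normal in $\Gamma$ and contained in $\Isom(\EE^n)$, Bieberbach's theorem makes $K$ virtually $\ZZ^k$ for some $0 \le k \le n$. If $k = 0$ then $K$ is finite; its fixed-point set $F \subsetneq \EE^n$ is a non-empty proper affine subspace preserved by $\Gamma$, so $\pi_{\Isom(\EE^n)}(\Gamma) \subseteq \Isom(F) \times \OO(F^\perp)$ contains no translations in the $F^\perp$-direction. This is incompatible with the cocompactness of $\Gamma$ on $\EE^n \times X$ and forces $F = \EE^n$, i.e.\ $K = 1$, a contradiction. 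If $k \ge 1$ I would pass to the characteristic translation subgroup $L \cong \ZZ^k$ of $K$, which is normal in $\Gamma$ and spans a subspace $V \le \RR^n$. Since conjugation of $\Gamma$ on $L$ factors through a finite subgroup of $\OO(V) \cap \GL(L)$, the finite-index subgroup $\Gamma_1 := C_\Gamma(L)$ has $L$ as a central free abelian subgroup acting by translations on a $\Gamma_1$-invariant flat, and the Splitting Theorem virtually decomposes $\Gamma_1$ as $L' \times \Gamma_2$ with both factors infinite; this contradicts algebraic irreducibility of $\Gamma$.

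For $(\Rightarrow)$ I would assume $\Gamma$ is faithful on $X$ and establish both algebraic and weak irreducibility. Faithfulness identifies $\Gamma$ with the graph of a homomorphism $\phi : \pi_H(\Gamma) \to \Isom(\EE^n)$, so that any finite-index decomposition $\Gamma' = \Gamma_1 \times \Gamma_2$ with both factors infinite yields infinite commuting subgroups $\pi_H(\Gamma_1), \pi_H(\Gamma_2)$ of $H$. By the Caprace-Monod structure theory for commuting isometry groups of an irreducible $\CAT(0)$ space (using cocompactness and minimality of the $H$-action), one of them, say $\pi_H(\Gamma_1)$, must have bounded orbits on $X$ and hence fix a point, so $\pi_H(\Gamma_1)$ lies in a compact cell stabiliser. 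Faithfulness then makes $\Gamma_1$ itself a discrete subgroup of $\Isom(\EE^n) \times U$ for some compact $U \le H$, so $\Gamma_1$ is virtually $\ZZ^{k}$ by Bieberbach. The central $\ZZ^{k} \le \Gamma_1$ then commutes with the cocompact subgroup $\pi_H(\Gamma_2)$ of $H$; since the centraliser in $H$ of such a cocompact subgroup is trivial (again by minimality and irreducibility), the $H$-image of $\ZZ^{k}$ is finite, contradicting faithfulness. For weak irreducibility I would argue similarly: a $\Gamma$-invariant orthogonal splitting of $\EE^n$ with a one-dimensional factor, or a discrete projection of $\Gamma$ to a proper subproduct of $\prod_j \OO(k_j) \times H$, would by \cite[Theorem~2]{CapraceMonod2019} virtually split off a $\ZZ$-summand inside $\Ker(\pi_H)$, again contradicting faithfulness.

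The main obstacle I anticipate is the commuting-action step in $(\Rightarrow)$: namely, justifying that commuting infinite isometry subgroups of the irreducible space $X$ force one to have bounded orbits, and that cocompact subgroups of $H$ have trivial centraliser in $H$. Both statements follow from the Caprace-Monod decomposition theory already invoked earlier in the paper, though some care is required since $X$ is not assumed geodesically complete. The tree case of Proposition~\ref{prop.faithful.isomEn} runs identically, with $X$ replaced by $\calt$ and $A$ by $T$, using that vertex stabilisers of $\calt$ in $T$ are profinite.
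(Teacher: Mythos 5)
Your proposal covers both implications and reaches correct conclusions, but it takes a genuinely different route from the paper in each direction. For ``irreducible $\Rightarrow$ faithful'' (your first paragraph), the paper quotes \cite[Corollary~3]{CapraceMonod2019} to get a finite amenable radical and then rules out a finite kernel by a bespoke claim about the intersection $\bigcap_n\Gamma_v^{t^n}$ of conjugates of a vertex stabiliser; your Bieberbach-plus-cocompactness treatment of the finite case and the central-$\ZZ^k$ splitting theorem for the infinite case is more self-contained and arguably cleaner. One genuine slip there: the ``characteristic translation subgroup'' of a discrete $K\le\Isom(\EE^n)$ need not have finite index (an infinite cyclic group generated by a screw motion with irrational rotation angle contains no non-trivial pure translation), so you should instead take a characteristic finite-index free abelian $L\cong\ZZ^k$ in $K$ (e.g.\ the intersection of the finitely many $\ZZ^k$-subgroups of a fixed index) and justify finiteness of the image of $\Gamma\to\Aut(L)$ by observing that conjugation preserves the translation-length function, which is a Euclidean norm on $L\otimes\RR$ by the Flat Torus Theorem; you should also say why the complementary factor $\Gamma_2$ is infinite.

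The real gap is in your direct attack on algebraic irreducibility in the converse direction. The two facts you defer to ``Caprace--Monod structure theory'' --- that of two commuting subgroups of $\Isom(X)$ with cocompact product on irreducible $X$ one must be bounded, and that cocompact subgroups of $A$ have trivial centraliser --- are exactly the non-routine content of \cite[Theorem~4.2]{CapraceMonod2009b}, already stated in the paper as Theorem~\ref{thm.CMirrCrit}, and as you note they are delicate without geodesic completeness and without knowing that $\overline{\pi_A(\Gamma_i)}$ acts minimally. The paper avoids this entirely: it proves \emph{weak} irreducibility directly --- the key step being that the commensurated $\ZZ^n$ of \cite[Theorem~2]{CapraceMonod2019} fixes a vertex of $X$ whose stabiliser in $A$ is profinite, so $\pi_A(\Gamma)$ is non-discrete, with the remaining subproducts handled by a rank count on commensurated abelian subgroups --- and then algebraic irreducibility follows from Theorem~\ref{thm.CMirrCrit}(2) using minimality. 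The cheapest repair of your argument is to drop the commuting-subgroups analysis and cite that theorem once weak irreducibility is in hand; your compressed weak-irreducibility sentence does unpack correctly (every discrete projection to a proper subproduct either makes $\Ker(\pi_A|_\Gamma)$ a crystallographic, hence infinite, group, or splits off a $\ZZ$ acting trivially on $X$), but the case where $\pi_A(\Gamma)$ itself is discrete should be made explicit.
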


\begin{proof}[Proof of Proposition~\ref{prop.faithful.isomEn} and \ref{prop.faithdul.isomEn.X}]
Assume $\Gamma$ is irreducible.  By \cite[Corollary~3]{CapraceMonod2019}, $\Gamma$ has finite amenable radical $B$.  Such a non-trivial element $g\in B$ stabilises a vertex of the Bass--Serre tree $\calt$ (resp. complex $X$).  Now, either $g$ has infinitely many conjugates which contradicts the finiteness of $B$, or $g$ stabilises the whole of $\calt$ (resp. $X$) and so is contained in $\Gamma\cap\Isom(\EE^n)$.  

Suppose we are in the latter case, then $B$, by the graph (resp. complex) of lattices description of $\Gamma$, acts faithfully on every vertex stabiliser of $\Gamma$.  Let $\bar t$ be an infinite order element of $\pi_{\OO(n)}(\Gamma)$ such that the intersection of the complements of the fixed point subspaces of $t$ and $B$ in $\EE^n$ is positive dimensional (and so at least dimension $2$) - such an element exists by the definition of irreducibility.  Let $t$ be a lift of $\bar t$ to $\Isom(\EE^n)$.   Consider a vertex stabiliser $\Gamma_v$ in the action on $\calt$ (resp. $X$) and let $E^+$ denote a subgroup of $\Gamma_v$ corresponding to a $2$-dimensional subspace fixed by neither $t$ nor $B$. 

\noindent \textbf{Claim:} \emph{L:=$\bigcap_{n\in\ZZ}\Gamma_v^{t^n}\cap E^+$ is trivial.}

We explain how the claim implies the result:  The claim implies that every element of $E^+$ acts non-trivially on $\calt$ (resp. $X$).  Thus, since $B$ acts non-trivially on $E^+$, it must also act non-trivially on $\calt$ (resp. $X$).  A contradiction.  Thus $\Gamma$ acts faithfully on $\calt$ (resp. $X$).

\noindent \textbf{Proof of claim:}
Suppose the intersection $L$ is non-trivial, then $L$ is normalised by $\langle t\rangle$.  It follows that $\pi_{\Isom(\EE^n)}(L)$ is normalised by $
\langle t\rangle$ which is clearly nonsensical unless $L=\{1\}$.  Indeed, $\bar t$ generates a non-discrete subgroup of $\OO(n)$ and acts by conjugation with infinite order on $\pi_{\Isom(\EE^n)}(L)$.  Thus, the orbit of the $\pi_{\Isom(\EE^n)}(L)$ under conjugation by $\langle t\rangle$ in $\Isom(\EE^n)$ is non-discrete.
\hfill $\blackdiamond$

% Since $\Gamma$ is irreducible there is an infinite order element in $\pi_{O(n)}(\Gamma)$ and hence an infinite order element in $\pi_{\Isom(\EE^n)}(\Gamma)$ which does not commute with $g$.  But now the normal closure of $g$ in $\Gamma$ must contained infinitely many conjugates of $g$.  Hence, $B$ is infinite, a contradiction.  Thus, $B$ must be trivial.  

% If $\Gamma$ acts on $X$ faithfully, then the projection $\pi_A(\Gamma)$ is non-discrete.  By \cite[Proposition~3.]{Hughes2021a} it suffices to show $P=\pi_{\Isom(\EE^n)}(\Gamma)$ is non-discrete.  Suppose $P$ is discrete, then there is a finite index subgroup of $P$ isomorphic to $Z=\ZZ^n$.  But this is a virtually normal free abelian subgroup, so by \cite[Theorem~2(ii)]{CapraceMonod2019}, $\Gamma$ is reducible and so there is a finite index subgroup of $Z$ which acts trivially on $X$, a contradiction.  Thus, $P$ is non-discrete and so $\Gamma$ is weakly irreducible and by Theorem~\ref{thm.CMirrCrit} algebraically irreducible.

Suppose, for the converse, that $\Gamma$ acts on $\calt$ (resp. $X$) faithfully.  Decompose minimally the closure of $\pi_{\OO(n)}(\Gamma)$ as $\prod_{i=1}^\ell O_i$ where $O_i=\OO(k_i)$ and $\sum_{i=1}^\ell k_i=n$.  To show $\Gamma$ is irreducible we need to show that each $k_i\geq 2$, and that $\pi_T(\Gamma)$ (resp. $\pi_A(\Gamma)$), $\pi_{O_I}(\Gamma)$ and $\pi_{O_I\times T}(\Gamma)$ (resp. $\pi_{O_I\times A}(\Gamma)$) are non-discrete for each proper subset $I\subset\{1\dots,\ell\}$.

First, suppose some $k_i=1$, then by \cite{CapraceMonod2019} $\Gamma$ fixes a points in $\partial X\times \EE^n$ and so virtually splits as $\Gamma'\times\ZZ$.  It follows from \cite[Theorem~2(ii)]{CapraceMonod2019} that the direct factor $\ZZ$ acts on $X$ trivially, contradicting faithfulness of the action.  Thus, each $k_i\geq2$.

Now, by \cite[Theorem~2]{CapraceMonod2019} there is a commensurated free abelian subgroup $L$ of $\Gamma$ of rank $n$ which fixes a vertex of $\calt$ (resp. $X$).  Since $\Gamma$ acts on $\calt$ (resp. $X$) faithfully, the projection of $L$ to $T$ (resp. $A$) fixes a vertex and so is an infinite subgroup of a profinite group.  In particular, the projection of $L$ and hence, $\pi_T(\Gamma)$ (resp. $\pi_A(\Gamma)$) are non-discrete.

Next, suppose some $\pi_{O_I}(\Gamma)$ is discrete, then by an identical argument to the previous paragraph $\Gamma$ virtually splits as direct product $\Gamma'\times\ZZ$ and the direct factor $\ZZ$ acts on $X$ trivially contradicting faithfulness of the action.  Thus, the projections $\pi_{O_I}(\Gamma)$ are non-discrete for each proper subset $I\subset\{1\dots,\ell\}$.

Finally, suppose some $\pi_{O_I\times T}(\Gamma)$ (resp. $\pi_{O_I\times A}(\Gamma)$) is discrete.  Note that by \cite[Theorem~2]{CapraceMonod2019} the largest rank of a commensurated abelian subgroup of $\Gamma$ is exactly $n$.  Since $\Gamma$ acts on $\calt$ (resp. $X$) faithfully, the projection is injective.  Let $m=\sum_{i\in I}k_i$ and note that this is strictly less than $n$  It follows that the projection of $\Gamma$ is an $(\Isom(\EE^{m})\times T)$-lattice (resp. ($\Isom(\EE^{m})\times A$)-lattice).  In particular, the largest rank of a commensurated abelian subgroup of $\Gamma$ is exactly $m$, but $m<n$, a contradiction. Thus, the projections $\pi_{O_I\times T}(\Gamma)$ (resp. $\pi_{O_I\times A}(\Gamma)$) are non-discrete for each proper subset $I\subset\{1\dots,\ell\}$.
\end{proof}

\subsection{\texorpdfstring{$C^\ast$}{C*}-simplicity of \texorpdfstring{$(\Isom(\EE^n)\times T)$}{(Isom(En)xT)}-lattices}\label{sec:faithful:Euclid}

\begin{thm}\label{thm.ExT.cstar}
Let $\mathcal{T}$ be a locally-finite leafless tree and $T=\Aut(\mathcal{T})$.  Let $n\geq1$ and $\Gamma<\Isom(\EE^n)\times T$ be a uniform lattice.  Then, $\Gamma$ is $C^\ast$-simple if and only if $\Gamma$ is irreducible.
\end{thm}
\begin{proof}
Suppose $\Gamma$ is reducible, then $\Gamma$ has a finite index subgroup $\Lambda$ splitting as $\ZZ\times\Lambda'$ which has non-trivial amenable radical and so cannot be $C^\ast$-simple.  Suppose $\Gamma$ is irreducible, then by Proposition~\ref{prop.faithful.isomEn} $\Gamma$ acts on $\calt$ faithfully.  Now, $\Gamma$ is non-residually finite by Theorem~\ref{thm.CM.nonresfin} so $\Ker(\pi_{\Isom(\EE^n)})$ is non-trivial (since otherwise $\Gamma$ would be a finitely generated linear group).  Moreover, $\Ker(\pi_{\Isom(\EE^n)})$ is infinite (if it was finite it would fix $\calt$). Now, by Theorem~\ref{thm.cstar.main}\eqref{thm.cstar.2b} we deduce that $\Gamma$ is $C^\ast$-simple
\end{proof}

% \section{Autostackability}\label{sec.lat.autostackability}
% \input{5auto}

\section{Complexes of lattices}\label{sec.col}
In this section we will introduce the notion of a complex of $H$-lattices.  We will then prove a structure theorem analogous to Theorem~\ref{thm.structure} for these complexes of $H$-lattices.  

\subsection{Complexes of groups}
The definitions in this section are adapted from \cite[Section~1.4]{Thomas2006} and \cite{Haefliger1991,Haefliger1992}. Throughout this section if $X$ is a polyhedral complex then $X'$ is its first barycentric subdivision.  This is a simplicial complex with vertices $VX'$ and edges $EX'$.  Each $e\in EX'$ corresponds to cells $\tau\subset\sigma$ of $X$ and so we may orient them from $\sigma$ to $\tau$.  We will write $i(e)=\sigma$ and $t(e)=\tau$.  We say two edges $e$ and $f$ of $X'$ are \emph{composable} if $i(e)=t(f)$, in which case there exists an edge $g=ef$ of $X'$ such that $i(g)=i(e)$ and $t(g)=t(f)$, and $e,\ f$ and $g$ form the boundary of a $2$-simplex in $X'$.  We denote the set of composable edges by $E^2X'$.

A \emph{complex of groups} $G(X)=(G_\sigma,\psi_e,g_{e,f})$ over a polyhedral complex $X$ is given by the following data:
\begin{enumerate}
    \item For each vertex $\sigma$ of $VX'$, a group $G_\sigma$ called the {\it local group at $\sigma$}.
    \item For each edge $e$ of $EX'$, a monomorphism $\psi_e: G_{i(e)}\rightarrow G_{t(e)}$ called the \emph{structure map}.
    \item For each pair of composable edges $e$ and $f$, an element $g_{e,f}\in  G_{t(e)}$ called the {\it twisting element}.  We require these elements to satisfy the following conditions:
\begin{enumerate}
    \item For $(e,f)\in EX'$, we have $\Ad(g_{e,f})\psi_{ef}=\psi_e\psi_f$.
    \item For each triple of composable edges $a$, $b$ and $c$ we have a {\it cocycle condition} $\psi_{a}(g_{b,a})=g_{c,b}g_{cb,a}$.
\end{enumerate} 
\end{enumerate}
We say $G(X)$ is \emph{simple} if each of the twisting elements $g_{e,f}$ are the identity.

Some complexes of groups arise from actions on polyhedral complexes.  Let $G$ be a group acting without inversions on a polyhedral complex $Y$.  Let $X=Y/G$ with natural projection $p:Y\to X$.  For each $\sigma\in VX'$, choose a lift $\widetilde\sigma\in VY'$ such that $p\widetilde\sigma=\sigma$.  The local group $G_\sigma$ is the stabiliser of $\widetilde\sigma$ in $G$, and the structure maps and twisting elements are given by further choices.  The resulting complex of groups $G(X)$ is unique up to isomorphism.  A complex of groups isomorphic to a complex of groups arising from a group action is called \emph{developable}.

Let $G(X)$ be a complex of groups over a polyhedral complex $X$.  Let $T$ be a maximal tree in the $1$-skeleton of $X'$ and fix a basepoint $\sigma$ in $T$.  The \emph{fundamental group} of $G(X)$, denoted $\pi_1(G(X),\sigma_0)$, is generated by the set
\[\coprod_{\sigma\in VX'}G_\sigma\coprod \{e^+,e^-\colon e\in EX'\} \]
subject to the relations
\[\left\{\begin{array}{l}
\text{the relations in the groups }G_\sigma,\\
(e^+)^{-1}=e^- \text{ and } (e^-)^{-1}=e^+,\\
e^+f^+=g_{e,f}(ef)^+,\ \forall (e,f)\in E^2X',\\
\psi_e(g)=e^+ge^-,\ \forall g\in G_{i(e)},\\
e^+=1,\ \forall e\in T.
\end{array} \right\} \]

If $G(X)$ is developable, then it has a \emph{universal cover} $\widetilde{G(X)}$.  This is a simply connected polyhedral complex, equipped with an action of $G=\pi_1(G(X),\sigma_0)$ such that the complex of groups given by $\widetilde{G(X)}/G$ is isomorphic to $G(X)$.

Let $G(X)=(G_\sigma,\psi_e)$ and $H(Y)=(H_\tau,\psi_f)$ be complexes of groups over polyhedral complexes $X$ and $Y$.  Let $f:X'\to Y'$ be a simplicial map sending vertices to vertices and edges to edges.  A \emph{morphism} $\Phi:G(X)\to H(Y)$ over $f$ consists of:
\begin{enumerate}
    \item A homomorphism $\phi_\sigma:G_\sigma\to H_{f(\sigma)}$ for each $\sigma\in VX'$. 
    \item For each $e\in EX'$ an element $h_e\in H_{t(f(e))}$ such that
    \begin{enumerate}
        \item $\Ad(g_e)\psi_{f(e)}\phi_{i(e)}=\phi_{t(e)}\psi_{e}$;
        \item For all $(a,b)\in E^2X'$ we have $\phi_{t(a)}(g_{a,b})h_{ab}=h_a\psi_{f(a)}(h_b)g_{f(a),f(b)}$.
    \end{enumerate}
\end{enumerate}

\begin{remark*}
    A specific instance of the morphism of complex of groups will be of particular interest to us.  Let $G(X)=(G_\sigma,\psi_e)$ be a complex of groups and let $H$ be a group viewed as a complex of groups over a single vertex.  In this case a morphism $\Phi\colon G(X)\to H$ consists of:
    \begin{enumerate}
    \item A homomorphism $\phi_\sigma:G_\sigma\to H$ for each $\sigma\in VX'$. 
    \item For each $e\in EX'$ an element $h_e\in H$ such that
        \begin{enumerate}
            \item $\Ad(h_e)\phi_{i(e)}=\phi_{t(e)}\psi_{e}$;
            \item For all $(a,b)\in E^2X'$ we have $\phi_{t(a)}(g_{a,b})h_{ab}=h_ah_b$.
        \end{enumerate}
    \end{enumerate}
    Note that in this case, this is exactly the data required to define a homomorphism $\pi_1(G(X))\to H$.
\end{remark*}

\subsection{Complexes of lattices}
In this section we introduce complexes of lattices in analogy with the graphs of lattices we defined previously.

\begin{defn}[Complex of lattices]\label{def.col}
Let $H$ be a locally compact group with Haar measure $\mu$.  A \emph{complex of $H$-lattices} $(G(X),\psi)$ is a developable complex of groups equipped with a morphism $\psi$ to $H$ such that:
\begin{enumerate}
    \item For each $\sigma\in VX'$ the pair $(A_\sigma,\psi_\sigma)$ is covirtually an $H$-lattice.
    \item The local groups are commensurable in $\Gamma=\pi_1(G(X),\sigma)$ and their images are commensurable in $H$.
    \item For each $e\in EX'$, the elements $e^+$ and $e^-$ in $\Gamma$ are mapped to elements of $\Comm_H(\psi(G_\sigma))$.
\end{enumerate}
\end{defn}

The analogous structure theorem is given as follows.

\begin{thm}\label{thm.col}
Let $X$ be a finite dimensional proper $\CAT(0)$ space and let $H=\Isom(X)$ contain a uniform lattice.  Let $(G(Z),\psi)$ be a complex of $H$-lattices over a polyhedral complex $Z$, with universal cover $Y$, and fundamental group $\Gamma$.  Suppose $A=\Aut(Y)$ admits a uniform lattice.
\begin{enumerate}
    \item Assume $Z$ is finite and $Y$ is a $\CAT(0)$ space.  If each local group $G_\sigma$ is covirtually a uniform $H$-lattice, and the kernel $\Ker(\psi|_{G_\sigma})$ acts faithfully on $Y$, then $\Gamma$ is a uniform $(H\times A)$-lattice and hence a $\CAT(0)$ group.  Conversely, if $\Lambda$ is a uniform $(H\times A)$-lattice, then $\Lambda$ splits as a finite complex of uniform $H$-lattices with universal cover $Y$.\label{thm.col.uniform}
    \item Assume $X$ is a $\CAT(0)$ polyhedral complex and $Y$ is a $\CAT(0)$ space.  Let $\mu$ be the normalised Haar measure on $H$.  If for each local group $G_\sigma$ the kernel $K_\sigma=\Ker(\psi|_{G_\sigma})$ acts faithfully on $Y$ and the sum $\sum_{\sigma\in VZ}\Covol(G_\sigma)/|K_\sigma|$ converges, then $\Gamma$ is a $(H\times A)$-lattice.  Conversely, if $\Lambda$ is a $(H\times A)$-lattice, then $\Lambda$ splits as a finite complex of $H$-lattices with universal cover $Y$.\label{thm.col.nonuniform}
\end{enumerate}
\end{thm}

Note that by definition we are assuming all complexes of lattices are developable complexes of groups. 

\begin{proof}
We first prove \eqref{thm.col.uniform}.  The fundamental group $\Gamma$ of $G(Z)$ acts on the universal cover $Y$ and on $X$ via the homomorphism $\psi:\Gamma\to H$.  The action on the product space $X\times Y$ is properly discontinuous cocompact and by isometries.  The kernel of the action is contained in the intersection $\bigcap_{\sigma\in Z'}\Ker(\psi|_{G_\sigma})$.  But this acts faithfully on $Y$, thus, the action is faithful.  It follows $\Gamma$ is an $(H\times A)$-lattice.

We now prove the converse.  Assume $\Gamma$ is an $(H\times A)$-lattice, and note that the action of $\Gamma$ on $Y$ yields a developable complex of groups $G(Z)=(\Gamma_\sigma, \psi_a, g_{a,b})$ with spanning tree $T$ and equipped with a homomorphism $\pi_H:\Gamma\to H$.  It suffices to show the local groups corresponding to the vertices of $Z$ are covirtually $H$-lattices.  Indeed, for an edge $e\in EZ'$, if the index $|\Gamma_{t(e)}:\psi_e(\Gamma_{i(e)})|$ is infinite, then the universal cover of $G(Z)$ would not be locally finite.  It follows that all of the local groups are commensurable and hence, commensurable in $H$.  Consequently, the elements $e^+$ and $e^-$ for all $e\in E^2Z'/T$ in $\Gamma$ must commensurate the local groups.

Let $\sigma\in Y$ be a vertex and consider the stabiliser $\Gamma_\sigma<\Gamma$ for the action on $X\times Y$.  Suppose $\Gamma_\sigma$ does not act cocompactly on $X\times\sigma$, then there is no compact set whose $\Gamma_\sigma$ translates cover $X\times\sigma$.  Let $D$ be a non-compact set whose $\Gamma_\sigma$-translates cover $X\times\sigma$, but there is a compact set $C$ whose $\Gamma$ translates cover $X\times Y$.  We may arrange our subsets such that $C'=C\cap(X\times\sigma)\subseteq D$.  In particular, there are elements $g_i\in\Gamma/\Gamma_\sigma$ whose translates of $C'$ cover $D$.  But some of these elements must fix $X\times\sigma$ yielding a contradiction.  Hence, $\Gamma_\sigma$ is cocompact.

It is clear that $\Ker(\Gamma_\sigma\to H)$ is finite.  Otherwise $\Gamma$ would act with infinite point stabilisers on $X\times Y$ contradicting the discreteness of $\Gamma$.  It remains to show that the projection $\overline{\Gamma}_\sigma$ of $\Gamma_\sigma$ to $H$ is discrete.  Assume that $\overline{\Gamma}_\sigma$ is not discrete, then there does not exists a neighbourhood $N$ of $1\in H$ such that $N\cap\overline{\Gamma}_\sigma=\{1\}$.  But this implies there does not exist a neighbourbood $N'$ of $1\in H\times A$ such that $N'\cap\Gamma=\{1\}$ which contradicts the discreteness of $\Gamma$.  It follows $\Gamma_\sigma$ is covirtually an $H$-lattice.

The final step is to show the elements $e^+$ and $e^-$ for each $e\in EX'$ are mapped to elements of $\Comm_H(\pi_H(\Gamma_\sigma))$.  But this is immediate since the local groups map to $H$ with finite kernel, the elements $e^+$ and $e^-$ commensurate the local groups, and so must still preserve the appropriate conjugation relations in the map to $H$. $\blackdiamond$

The proof of \eqref{thm.col.nonuniform} is almost identical to \eqref{thm.col.uniform} we will highlight the differences.  Since $X$ is a $\CAT(0)$ polyhedral complex, it follows that $X\times Y$ is.  Now, we may apply Serre's Covolume Formula to $\Gamma$.  Let $\Delta$ be a fundamental domain for $\Gamma$ acting on $X\times Y$, then the covolume of $\Gamma$ may be computed as
\begin{align*}\sum_{\sigma\in \Delta^{(0)}}\frac{1}{|\Gamma_\sigma|}&=\sum_{\sigma\in\pi_Y(\Delta^{(0}))}\sum_{\tau\in\pi_Y^{-1}(\sigma)}\frac{1}{|\Gamma_\tau|}\\
&=\sum_{\sigma\in\pi_Y(\Delta^{(0)})}\frac{1}{|K_\sigma|}\sum_{\tau\in\pi_Y^{-1}(\sigma)}\frac{|K_\sigma|}{|\Gamma_\tau|}\\
&=\sum_{\sigma\in\pi_Y(\Delta^{(0)})}\frac{\Covol(\Gamma_\sigma)}{|K_\sigma|}. \end{align*}
Since $\pi_Y(\Delta^{(0)})$ can be identified with $Z$ and the latter sum converges by assumption, it follows as before that $\Gamma$ acts faithfully properly discontinuously and isometrically with finite covolume on $X\times Y$.  

For the converse observe that as in \eqref{thm.col.uniform} the stabilisers of $Y$ are commensurable and discrete.  If the stabilisers of $Y$ were not finite volume, then one easily obtains a contradiction using Serre's covolume formula.  The rest of the proof is identical. $\blackdiamond$
\end{proof}

\begin{corollary}\label{thm.col.qi}
    Under the same hypotheses as \Cref{thm.col}\eqref{thm.col.uniform}, $\Gamma$ is quasi-isometric to $X\times Y$.
\end{corollary}
\begin{proof}
   By \Cref{thm.col}\eqref{thm.col.uniform}, $\Gamma$ acts properly discontinuously cocompactly on $X\times Y$.  The result follows from the \v{S}varc-Milnor Lemma \cite[I.8.19]{BridsonHaefligerBook}.
\end{proof}

\section{Biautomaticity in the presence of a Euclidean factor}\label{sec.biaut}
In this section we give a condition to determine the failure of biautomaticity for a $\CAT(0)$ group in the presence of a non-trivial Euclidean de Rham factor.

For the rest of this section we fix the following notation and terminology, the treatment roughly follows \cite[Section~2]{LearyMinasyan2019} and \cite[Section~2.3, 2.5]{EpsteinEtAl}.  Let $\cala$ be a finite set and let $\Gamma$ be a group with a map $\mu:\cala\to\Gamma$.  We say that $\Gamma$ is \emph{generated by $\cala$} if the unique extension of $\mu$ to the homomorphism from the free monoid $\cala^\ast$ to $\Gamma$ is surjective.  We will call elements of $\cala^\ast$ \emph{words} and for any $w\in\cala^\ast$, if $\mu(w)=g$ for some $g\in\Gamma$, we will say \emph{$w$ represents $g$}.  We will always assume $\cala$ is closed under inversion, that is, there is an involution $i:\cala\to\cala$ such that $\mu(i(a))=\mu(a)^{-1}$, in this case we will denote $i(a)$ as $a^{-1}$.  Any \emph{subset} $\call\subseteq\cala^\ast$ will be called a \emph{language over $\cala$}.

An \emph{automatic structure} for a group $\Gamma$ is a pair $(\cala,\call)$, where $\cala$ is a finite generating set of $\Gamma$ equipped with a map $\mu:\cala\to\Gamma$ and closed under inversion, and $\cala\subseteq\cala^\ast$ is a language satisfying three conditions.  Firstly, $\mu(\call)=\Gamma$, secondly $\call$ is a \emph{regular language}, that is, it is accepted by some finite state automaton, and thirdly, it satisfies a fellow traveller property (which we will not make precise here).  We say $(\cala,\call)$ is \emph{biautomatic structure} if both $(\cala,\call)$ and $(\cala,\call^{-1})$ are automatic structures.  A group $\Gamma$ is said to be \emph{automatic (resp. biautomatic)} if it admits an automatic (resp. biautomatic) structure.

A (bi)automatic structure is \emph{finite-to-one} if $|\mu^{-1}(g)\cap\cala|<\infty$ for all $g\in\Gamma$.  As noted in \cite[Page~8]{LearyMinasyan2019} by \cite[Theorem~2.5.1]{EpsteinEtAl} it may be assumed that all (bi)automatic structures are finite-to-one. So without loss of generality we will make this assumption and we will also suppose that all the automata in this paper have no dead states.

A subgroup $H<\Gamma$ is \emph{$\call$-quasiconvex} if there exists $\kappa\geq0$ such that for any path $p$ in the Cayley graph of $\Gamma$ with respect to $\cala$, starting at $1_\Gamma$, ending at some $h\in H$, and labelled by a word $w\in\call$, then every vertex of $p$ lies in the $\kappa$-neighbourhood of $H$.  The main examples of $\call$-quasiconvex subgroups are centralisers of finite subsets as proved in \cite[Proposition~4.3]{GerstenShort1991} and \cite[Theorem~8.3.1 and Corollary~8.3.5]{EpsteinEtAl}.

\begin{thm}\label{thm.notbiaut}
Let $X=\prod_{i=1}^mX_i$ be a product of proper irreducible $\CAT(0)$ spaces each not isometric to $\EE$ and $H<\Isom(X)$ be a closed subgroup acting minimally and cocompactly on $X$.  Let $n\geq 2$ and let $\Gamma$ be an $(\Isom(\EE^n)\times H)$-lattice.  If the projection $\pi_{\Isom(\EE^n)}(\Gamma)$ is not discrete, then $\Gamma$ is not virtually biautomatic.
\end{thm}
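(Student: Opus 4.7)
I would argue by contradiction, adapting and extending the strategy of Leary and Minasyan \cite{LearyMinasyan2019}. Suppose $\Gamma$ has a finite-index subgroup $\Gamma_0$ admitting a biautomatic structure $(\cala,\call)$. Since $[\Gamma:\Gamma_0]<\infty$, the subgroup $\Gamma_0$ is still an $(\Isom(\EE^n)\times H)$-lattice and the projection $\pi_{\Isom(\EE^n)}(\Gamma_0)$ remains non-discrete.

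The first goal is to extract the two key ingredients for a biautomaticity obstruction: a commensurated rank-$n$ free abelian subgroup $L\leq\Gamma_0$, together with an element $\gamma\in\Gamma_0$ whose conjugation action on $L$ is by an orthogonal matrix of infinite order. The existence of $L$ follows immediately from the Caprace--Monod structure theorem for $\CAT(0)$ lattices (Theorem~2 of \cite{CapraceMonod2019} as cited in the preliminaries): it yields a commensurated $\ZZ^n\leq\Gamma_0$ acting faithfully by translations on $\EE^n$ and trivially on $X$, so $L\leq\Gamma_0\cap(\RR^n\times\{1\})$. Writing $\Isom(\EE^n)=\RR^n\rtimes\OO(n)$, the conjugation action of any $\gamma\in\Gamma_0$ on $L\otimes_\ZZ\QQ$ is given precisely by the orthogonal part $\pi_{\OO(n)}(\gamma)$. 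To produce $\gamma$ with infinite-order rotational part, I would argue that $\pi_{\OO(n)}(\Gamma_0)$ cannot be finite: otherwise passage to the kernel would place all of $\Gamma_0$ inside $\RR^n\times H$, making $L$ central in $\Gamma_0$, and the non-discreteness of $\pi_{\Isom(\EE^n)}(\Gamma_0)$ would then force an accumulation of translations in $\RR^n$ that produces a commensurated abelian subgroup of rank strictly greater than $n$, contradicting the maximality assertion in the Caprace--Monod theorem. Since $\OO(n)$ is a compact Lie group, an infinite subgroup contains elements generating non-discrete subgroups, from which (after a further finite-index refinement) one can select $\gamma$ with $\pi_{\OO(n)}(\gamma)$ of infinite order, commensurating a finite-index subgroup of $L$.

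The final step is the application of the biautomaticity obstruction. By the theorem of Gersten--Short \cite{GerstenShort1991} cited in the preliminaries of this section, the centraliser of any finite subset of $\Gamma_0$ is $\call$-quasiconvex. Applying this to a non-zero $v\in L$ and considering the sequence of iterated conjugates $\gamma^k v\gamma^{-k}=A^k v$ (where $A=\pi_{\OO(n)}(\gamma)$), these elements trace out a non-closed orbit on the sphere of radius $\|v\|$ in $\EE^n$ owing to the infinite order of $A$. The Leary--Minasyan analysis then compares the word-length of $\gamma^k v\gamma^{-k}$ inside $C_{\Gamma_0}(v)$ with the Euclidean displacement of $A^k v$ from $v$, yielding estimates incompatible with the fellow-traveller property required of the regular language $\call$. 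The main obstacle I expect is to reformulate the Leary--Minasyan obstruction, originally stated for explicit HNN extensions over $\ZZ^n$ with a rational orthogonal matrix, into an abstract criterion depending only on the commensuration data $(L,\gamma)$ with a rotational conjugation action of infinite order; once this abstract obstruction is in place, the contradiction follows at once.
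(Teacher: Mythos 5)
Your proposal assembles the right ingredients --- the commensurated rank-$n$ free abelian subgroup $A$ from \cite{CapraceMonod2019}, an element whose image in $\OO(n)$ has infinite order, quasiconvexity of centralisers of finite sets via \cite{GerstenShort1991}, and a Leary--Minasyan-type obstruction --- but it has a genuine gap exactly where the paper's proof does its real work. The engine you need, and which you flag as ``the main obstacle'', is \cite[Corollary~5.4]{LearyMinasyan2019}: in a biautomatic group, if a commensurated free abelian subgroup is $\call$-quasiconvex, then a finite-index normal subgroup centralises a finite-index subgroup of it. This is already stated in the abstract generality you want, so no reformulation is required; but it needs $\call$-quasiconvexity of (a finite-index subgroup of) $A$ \emph{itself}, and that is precisely what your argument does not supply. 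Applying Gersten--Short to the single element $v$ only shows that $C_{\Gamma_0}(v)$ is quasiconvex, and in general $C_{\Gamma_0}(v)$ is much larger than $A$: it contains everything in the $H$-direction commuting with $v$ (for instance, if $\Gamma=\LM(A)\times\Lambda$ with $\Lambda$ an $H$-lattice, then $C_\Gamma(v)\supseteq\ZZ^2\times\Lambda$). Quasiconvexity of this larger subgroup does not descend to $A$, and your sketched endgame --- comparing word lengths of the conjugates $\gamma^k v\gamma^{-k}$ inside $C_{\Gamma_0}(v)$ with the rotation dynamics on a sphere --- has no evident source of contradiction, since $\gamma\notin C_{\Gamma_0}(v)$ and the distortion you would need to control lives in $A$, not in $C_{\Gamma_0}(v)$. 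The paper's proof spends essentially all of its effort on this point: it splits $X=Y_1\times Y_2$ according to where $A$ acts trivially and constructs a specific finite set $S_K'\cup S_{Y_2}\cup S_A$ (generators of the complementary lattice factor together with non-commuting partners, elements of $\Ker(\pi_{\Isom(\EE^n)})$ acting nontrivially on each irreducible factor of $Y_2$ together with non-commuting partners, and generators of $A$) whose centraliser is exactly a finite-index subgroup $A'\leq A$; only then does \cite[Proposition~4.3]{GerstenShort1991} deliver the quasiconvexity needed to invoke \cite[Corollary~5.4]{LearyMinasyan2019}, after which the contradiction with an infinite-order element of $\pi_{\OO(n)}(\Gamma)$ is immediate.

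Two smaller points. First, your claim that $L\leq\Gamma_0\cap(\RR^n\times\{1\})$, i.e.\ that the commensurated $\ZZ^n$ acts trivially on $X$, is false in general: it acts properly on $\EE^n$ but typically acts non-trivially on $X$ (in $\LM(A)$ the subgroup $\ZZ^2$ stabilises a vertex of the tree yet moves other vertices, and irreducibility forbids a finite-index subgroup acting trivially); fortunately your later steps do not really use this. Second, your justification that $\pi_{\OO(n)}(\Gamma_0)$ is infinite is not yet a proof: a non-discrete group of translations need not yield a commensurated free abelian subgroup of rank greater than $n$ (consider $\ZZ[\frac{1}{p}]$ as a lattice in $\RR\times\QQ_p$, where the whole group is abelian of torsion-free rank one), so the claimed contradiction with the Caprace--Monod maximality statement does not follow as stated; one must genuinely use the structure of $H$ here, and the paper itself asserts the existence of the infinite-order element without further argument.
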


\begin{proof}
 By \cite[Theorem~2(i)]{CapraceMonod2019} there exists a commensurated free abelian subgroup $A\leq\Gamma$ acting properly on $\EE^n$ of rank $n$.  Assume $(\calb,\call)$ is a biautomatic structure on $\Gamma$.

\noindent\textbf{Claim:} \textit{There is a finite index subgroup $B$ of $A$ that is the centraliser of a finite set $\cals\subseteq G$.}%  Moreover, $B$ is $\call$-quasiconvex.}

\noindent\textbf{Proof of Claim:} By the Flat Torus Theorem the rank of a maximal abelian subgroup of $\Gamma$ is bounded by the rank of a maximal flat in $X\times\EE^n$.  Let $F$ be such a flat acted on by $A$.  Fix a set of generators $S_A$ for $A$ and a set of generators $S$ containing $S_A$ for the maximal abelian subgroup containing $A$ stabilising $F$.

We may split $X$ into a product $Y_1\times Y_2$ where $A$ acts trivially on $Y_1$ and non-trivially on $Y_2$.  For $j=1,2$ let $K_j=\Isom(Y_j)\cap H$.  Since, $A$ acts trivially on $Y_1$ it follows $A$ and $\Gamma\cap K_1$ commute.  Now, $\Gamma$ splits as a complex of $(\Isom(\EE^n)\times K_1)$-lattices.  Note that any symmetric space factor of $X$ is contained in $Y_1$ since $A$ projects trivially by \cite[Lemma~6]{CapraceMonod2019}.  In particular, $A$ is a subgroup of a vertex group $\Gamma_v$, which is covirtually virtually isomorphic to $A\times K_v$, where $K_v$ is a lattice in $K_1$. Define $S_K$ to be a set of generators for $K_v$ and for each $s\in S_K$ let $s'\in K_v$ be some element which does no commute with $s$.  Define a set $S_K'=\{s,s'\colon s\in S_K\}$ and note that it is finite.

Let $N=\Ker(\pi_{\Isom(\EE^n)})$.  For each irreducible factor $Z_j$ for $j=1,\dots,\ell$ of $Y_2$ choose some element $g_j\in N <\Gamma$  which acts non-trivially on $Z_j$.  Note the kernel $N$ is non-empty since otherwise $\Gamma$ would be a finitely generated linear group and hence residually finite, contradicting \cite[Theorem~2(iv)]{CapraceMonod2009a}.  Now, we can choose such an element so that it centralises a finite index subgroup of $A$.  Indeed, we may choose $g_j\in N$.  Since $A$ is commensurated and $N$ is a normal subgroup, $g_j$ centralises a finite index subgroup of $A$.  For each $g_j$ pick another element $g_j'$ which centralises a finite index subgroup of $A$ and does not commute with $g_j$.  Let $S_{Y_2}=\{g_j,g_j'\colon j=1\dots,\ell\}$ and note that it is finite. Let $B=\left(\bigcap_{g\in S_{Y_2}}A^g\right)\cap A$.  Since $B$ is the intersection of $A$ with finitely many subgroups of $\Gamma$ commensurable with $A$, we see $B$ is a finite index subgroup of $A$.  By construction $B\coloneqq A'$ is the centraliser of the finite set $\cals \coloneqq  S_K'\cup S_{Y_2}\cup S_A$.  %Thus, by \cite[Proposition~4.3]{GerstenShort1991}, $A'$ is $\call$-quasiconvex. 
\hfill $\blackdiamond$

 We aim to apply a result of Leary and Minasyan \cite[Corollary~5.4]{LearyMinasyan2019}, which states: 

\emph{Let $G$ be a biautomatic group and let $X\subseteq G$ be a finite subset such that $B\coloneq C_G(X)$ is abelian.  Then, there is a finite index subgroup $\Comm_G^0(B)$ of $\Comm_G(B)$ such that every finitely generated subgroup of $\Comm_G^0(H)$ centralises a finite-index subgroup of $B$ in $G$.} 

In our case we take our $B$ to be in the claim, noting it is the centraliser of the finite set $\cals$.  Now, since $B$ is a commensurated subgroup of $\Gamma$, by the aforementioned result of Leary--Minasyan, there is a finite index subgroup $\Gamma^0\trianglelefteq\Gamma$ such that every finitely generated subgroup of $\Gamma^0$ centralises a finite index subgroup of $B$.

As $\pi_{\Isom(\EE^n)}(\Gamma)$ is not discrete, there exists an element in $\overline{t}\in\pi_{O(n)}(\Gamma)$ with infinite order, let $t$ denote a preimage of $\overline{t}$ in $\Gamma$.   After passing to a suitable power we may assume $t^k\in\Gamma^0$.  But $\langle t^k\rangle$ does not centralise any finite index subgroup of $A$ (hence also of $B$), a contradiction.  Thus, there is no biautomatic structure on $\Gamma$.  Since the hypotheses on $\Gamma$ pass to finite index subgroups, it follows $\Gamma$ is not virtually biautomatic.
\end{proof}

\begin{example}
The group $\Gamma_n$ for each $n\ge2$ constructed in Example~\ref{ex.LM.odddim.lat} is an irreducible $(\Isom(\EE^n)\times T_{10n})$-lattice that is not virtually biautomatic.
\end{example}

\begin{remark}
In light of M.~Valiunas' result \cite[Theorem~1.2]{Valiunas2021a} Theorem~\ref{thm.notbiaut} can be strengthened to state that $\Gamma$ does not embed into any biautomatic group.  It may also be possible to simplify the proof using their result.
\end{remark}

\section{Products with Salvetti complexes}\label{sec.salvetti}
In this section we will adapt a construction of Horbez and Huang \cite[Proposition~4.5]{HorbezHuang2020} to extend actions from trees to Salvetti complexes.  Horbez--Huang constructed an example of a non-uniform lattice acting on the universal cover of the Salvetti complex $\widetilde X_L$ provided $L$ is not a complete graph.  We adapt this to construct a tower of uniform lattices in $\Aut(\widetilde X_L)$.

\subsection{Right-angled Artin groups and Salvetti complexes}

Let $L$ be a flag complex.  We begin by with a wedge of circles (each made of a single vertex and edge) attached along a common vertex $x$.  For each edge $\{v,w\}$ in $L$ we attach a $2$-torus along the word $vwv^{-1}w^{-1}$.  Continuing inductively, for each $n\geq 2$ cell $\{v_1,\dots,v_n\}$ in $L$ we attach an $n$-torus such that the faces correspond to already attached $(n-1)$-tori.  We denote the resulting space by $X_L$ and call it the \emph{Salvetti complex} of $L$.

The fundamental group $A_L\coloneqq \pi_1(X_L)$ is the \emph{right-angled Artin group} (RAAG) on $L$.  The group has a generating set given by the vertices of $L$ and the relations that two generators $v$ and $w$ commute if and only if they are joined by an edge in $L$.  We denote the universal cover of the Salvetti complex $X_L$ by $\widetilde{X}_L$ and the isometry group of $\widetilde{X}_L$ by $S_L$.

\subsection{Extending actions over Salvetti complexes}
We will now adapt the construction of Horbez and Huang \cite[Proposition~4.5]{HorbezHuang2020} to extend actions from trees to Salvetti complexes and present some applications.  Let $\calt_{2k}$ denote the $2k$-regular tree and let $T_{2k}$ denote $\Aut(\calt_{2k})$.

\begin{construction}\label{construction.salvetti}
\emph{Let $L$ be a finite simplicial graph on vertices $\{v_1,\dots,v_m\}$ and suppose $\langle v_1,\dots, v_k\rangle=F_{k}<A_L$ is a free subgroup.  Let $\Gamma$ be a group acting on $\calt_{2k}$ by isometries such that the action is label-preserving, then the action of $\Gamma$ on $\calt_{2k}$ extends to an action of $\widetilde{\Gamma}$ on $\widetilde{S}_L$ by isometries.  Moreover, if $\Gamma$ is a $T_{2k}$-lattice then $\widetilde{\Gamma}$ is an $\Aut(\widetilde{S}_L)$-lattice.}
\end{construction}
\begin{proof}
Define $\calv=\{v_1,\dots,v_k\}$.  Define $\phi:A_L\twoheadrightarrow F_{k}$ by $v\mapsto 1$ unless $v\in\calv$ and let $\pi:\widetilde{S}_L\to X$ be the covering space corresponding to $\Ker(\phi)$.  Let $\Gamma$ be a group acting on $\calt_{2k}$ preserving the labelling, we want to extend the action of $\Gamma$ on $\calt_{2k}$ to an action on $X$.

We may identify the vertex set of $\calt_{2k}$ with the vertex set of $X$ via the embedding of $\calt_{2k}\rightarrowtail X$.  We orient each edge of $\widetilde{S}_L$ and endow $X$ with the induced labelling and orientation.  The $1$-skeleton $X^{(1)}$ of $X$ is obtained from $\calt_{2k}$ by attaching to each vertex of $\calt_{2k}$ a circle for each $v\in VL\backslash \calv$.

Since $\Gamma$ acts by isometries on $\calt_{2k}$ label preservingly, it follows $\Gamma$ acts by isometries on $X^{(1)}$ label preservingly and preserves the orientation of edges in $VL\backslash \calv$.  It follows the action extends to $X$.  Let $\widetilde{\Gamma}$ be the group of lifts of all automorphisms in $\Gamma$, we have a short exact sequence
\[\begin{tikzcd}
1 \arrow[r] & \Aut(\pi) \arrow[r] & \widetilde{\Gamma} \arrow[r] & \Gamma \arrow[r] & 1.
\end{tikzcd} \]

We have $\widetilde{S}_L/\widetilde{\Gamma}=X/\Gamma$ so there is a bijection between the $\widetilde{\Gamma}$-orbits of $\widetilde{S}_L^{(0)}$ and the $\Gamma$-orbits of $\calt_{2k}^{(0)}$.  For a vertex $v\in X$, each lift of $g\in\rm{Stab}_\Gamma(v)$ fixes a unique vertex $\tilde{v}\in\widetilde{S}_L$.  In particular, the cardinality of the vertex stabilisers is preserved.  It follows from Serre's covolume formula that if $\Gamma$ was a $T_{2k}$-lattice, then $\widetilde{\Gamma}$ is an $\Aut(\widetilde{S}_L)$-lattice.
\end{proof}

A sequence of lattices $(\Gamma_n)_{n\geq 1}$ in a locally compact group $H$ is \emph{an ascending tower} if we have proper inclusions $\Gamma_n\rightarrowtail \Gamma_{n+1}$.  Note that if a sequence of lattices $(\Gamma_n)_{n\geq 1}$ is an ascending tower, then the covolumes $\Covol(\Gamma_n)$ converge to $0$ as $n$ goes to infinity.

\begin{prop}
 There is an ascending tower of lattices in $T_{4}=\Aut(\calt_{4})$ with label-preserving action.
\end{prop}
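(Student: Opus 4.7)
The plan is to realise $\mathcal{T}_4$ as the Cayley graph of $F_2 = \langle a,b\rangle$ with its natural labelling of edges by generators, and work inside the Burger--Mozes universal subgroup $U := U(V_0) \leq T_4$ of label-preserving automorphisms. Here $V_0 = \langle \sigma_a, \sigma_b\rangle \cong (\ZZ/2)^2$ is the subgroup of the symmetric group on the four labelled edge-germs $\{a^{\pm 1}, b^{\pm 1}\}$ that preserves the label pairs $\{\{a,a^{-1}\},\{b,b^{-1}\}\}$. The group $U$ is a non-discrete closed subgroup of $T_4$ and contains $F_2$ as a uniform lattice acting freely transitively on vertices, giving covolume $1$.

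To build the tower, I would set $\Gamma_0 := F_2$ and define $\Gamma_n := \langle \Gamma_{n-1}, \tau_n\rangle$ inductively, where $\tau_n \in U \setminus \Gamma_{n-1}$ is a label-preserving involution fixing the basepoint $v_0$ and supported in the combinatorial ball $B(v_0, n)$. Concretely, $\tau_n$ can be chosen as a local flip of a pair of like-labelled edges at some vertex at distance $n-1$ from $v_0$, extended by the identity outside a small neighbourhood. Since each $\tau_n$ lies in $U$, the whole tower sits in $U$ and acts label-preservingly. Each $\Gamma_n$ contains $F_2$, hence is vertex-transitive and therefore cocompact on $\mathcal{T}_4$. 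The inclusion $\Gamma_n \subsetneq \Gamma_{n+1}$ is strict because $\tau_{n+1}$ acts non-trivially on $B(v_0, n+1)\setminus B(v_0, n)$, while any element of $\Gamma_n$ fixing $v_0$ must act trivially on this annulus by construction.

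The crux of the argument is to verify discreteness of each $\Gamma_n$, i.e.\ the finiteness of $\mathrm{Stab}_{\Gamma_n}(v_0)$. A priori, the $F_2$-conjugates $g \tau_n g^{-1}$ are supported in translated balls $B(g\cdot v_0, n)$, and products of these with the other generators could yield a stabiliser at $v_0$ larger than the subgroup generated by $\tau_n$ and the finitely many of its $F_2$-translates fixing $v_0$. The cleanest way to rule this out is to realise each $\Gamma_n$ as the fundamental group of an explicit finite graph of finite groups with Bass--Serre tree $\mathcal{T}_4$: a single vertex carrying a finite group $E_n$ (playing the role of $\mathrm{Stab}_{\Gamma_n}(v_0)$), together with two loops whose edge groups are index-two subgroups of $E_n$ encoding the local action of $V_0$ on the $a$- and $b$-edge pairs. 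By Bass--Serre theory this presentation automatically gives a discrete uniform lattice of covolume $1/|E_n|$, and strict containment $\Gamma_n \subsetneq \Gamma_{n+1}$ follows from strict containment $E_n \subsetneq E_{n+1}$, which is engineered by the choice of $\tau_{n+1}$.

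The main obstacle is therefore the compatible identification of the involutions $\tau_n$ (equivalently, of the vertex stabilisers $E_n$) so that the resulting finite graphs of finite groups are well-defined, embed into $U$, and fit into a coherent sequence of inclusions. Once this is done, each $\Gamma_n$ is a uniform lattice in $T_4$ with label-preserving action, and $\Gamma_0 \subsetneq \Gamma_1 \subsetneq \Gamma_2 \subsetneq \cdots$ is the required ascending tower.
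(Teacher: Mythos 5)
Your overall strategy --- an ascending chain of cocompact, label-preserving lattices on $\calt_4$ distinguished by strictly growing finite vertex stabilisers, with discreteness certified by realising each group as the fundamental group of a finite graph of finite groups --- is the right one, and is in spirit what the paper does. However, the proof is not complete: the entire content of the proposition is concentrated in the step you defer (``the main obstacle is therefore the compatible identification of the involutions $\tau_n$ \dots Once this is done\dots''). Adjoining a finitely supported elliptic element of $U(V_0)$ to $F_2$ need not produce a discrete group ($U(V_0)$ is non-discrete precisely because $V_0$ does not act freely on the four germs, and its vertex stabilisers contain nontrivial elements fixing arbitrarily large balls), so the choice of the $\tau_n$ --- equivalently, of the nested finite groups $E_n$ together with their edge subgroups and the identifications making the graphs of groups coherent and embeddable in $U(V_0)$ --- is exactly the delicate construction that must be exhibited; you correctly diagnose the difficulty but do not resolve it. The paper resolves it by importing an explicit family from Bass--Kulkarni: the HNN extensions $\Gamma_r=\langle V_r,t \mid f^t=\alpha_r(f),\ f\in W_r\rangle$ with $V_r\cong\ZZ_2^r$, $W_r\leq V_r$ of index $2$ and $\alpha_r$ the coordinate shift, which act on $\calt_4$ with quotient a loop and covolume $2^{-r}$, and which satisfy $\Gamma_r\leq\Gamma_{r'}$ whenever $r\mid r'$; the label-preserving tower is then the chain of index-two subgroups $\Lambda_{r^s}=\Ker(\Gamma_{r^s}\to\ZZ_2)$, whose quotient graph is the barycentric subdivision of a loop, so that the two edge orbits realise the two labels.

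There is also a concrete error in the shape of your proposed graph of groups. A single vertex carrying $E_n$ with \emph{two} loops whose edge groups have index two at both ends has Bass--Serre tree of valence $2+2+2+2=8$, i.e.\ $\calt_8$, not $\calt_4$; and taking the edge groups equal to $E_n$ to restore valence $4$ forces $E_n$ to act trivially on the tree. To obtain the $4$-regular tree with a nontrivial, faithfully acting vertex group one needs either a single loop with index-two edge inclusions on both sides (the Bass--Kulkarni HNN shape, which is edge-transitive and therefore cannot preserve a two-labelling --- this is precisely why the paper passes to the index-two kernel) or two vertices joined by two edges with all edge inclusions of index two (the paper's $\Lambda_r$). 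This is not merely cosmetic: getting the quotient graph right is what produces the two edge orbits needed for the action to preserve the labelling of $\calt_4$ by the two generators of $F_2$.
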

\begin{proof}
Let $r\geq 2$ and $s\geq1$. By \cite[Proposition~7.10]{BassKulkarni1990} there exists, in $T_3$, a tower of uniform lattices $\Gamma_{r^s}$, with $r$ fixed and $s\to \infty$, and with fundamental domain a path of length $2$ wedged onto the basepoint of a single edge loop.  By \cite[Lemma~9.2]{Huang2018} (see also \cite[Proposition~4.1]{Hughes2022} for a more explicit statement) we obtain a sequence of lattices $\Gamma_{r^s}'$ acting label preservingly on $\calt_4$.  To see that $\Gamma_{r^s}\rightarrowtail \Gamma_{r^{s'}}$ induces $\Gamma_{r^s}'\rightarrowtail \Gamma_{r^{s'}}'$ note that by \cite[Proposition~4.1]{Hughes2022} all of the groups admit a short exact sequence \[1\to F_\infty \to \Gamma_{r^s}'\to \Gamma_{r^s}\to 1 \]
and that the extensions are all compatible because the fundamental domain does not depend on $r$ or $s$.
Passing to the orientation preserving, index at most $2$, subgroup we obtain a tower of lattices $\Lambda_{r^s}$.  To see the covolume converges to $0$ note that the fundamental domain of $\Lambda_{r^s}$ is fixed for all $s$ but the stabilisers increase in order.  The result now follows from Serre's covolume formula.
% The groups will be index two subgroups of the HNN extensions constructed in \cite[Example~7.4]{BassKulkarni1990}.  We describe them here for the convenience of the reader.  Let $V_r=\{f:\ZZ_r\to\ZZ_2\ \colon\ f\text{ a function}\}\cong\ZZ_2^r$ and $\alpha_r\in\Aut(V_r)$ by $\alpha_r(f)(i)=f(i+1)$.  Let $W_r=\{f\in V_r\ \colon\ f(0)=1\}\cong\ZZ_2^{r-1}$ and define $\Gamma_r$ to be the HNN extension
% \[\langle V_r,t\ |\ f^t=\alpha_r(f)\ \forall f\in W_r\rangle. \]
% By \cite[Proposition~7.6]{BassKulkarni1990} the group $\Gamma_r$ acts faithfully on $\calt_{4}$ with quotient a loop (one vertex and one edge) and covolume $1/m^r$.  Moreover, if $r|r'$ then $\Gamma_r\leq\Gamma_{r'}$ with index $m^{r'-r}$ and so for $r\geq2$, the sequence $(\Gamma_{r^s})_{s\geq1}$ is an infinite ascending chain in $\rm{Lat}_u(\calt_4)$.
%
% Now, define $\phi:\Gamma_r\to\ZZ_2$ by $\phi(V_r)=0$ and $\phi(t)=1$.  The kernel $\Lambda_r$ is an index two subgroup which satisfies the same properties as $\Gamma_r$ except now the quotient has fundamental domain the first barycentric subdivision of a loop (two vertices and two edges) and covolume $2/m^r$.
\end{proof}

\begin{corollary}
Let $L$ be a finite flag complex which is not a full simplex.   Then the automorphism group of the universal cover of the Salvetti complex contains an ascending tower of uniform lattices $(\Gamma_n)_{n\geq 1}$.  In particular, $\Covol(\Gamma_n)$ converges to $0$ as $n$ goes to infinity. 
\end{corollary}
\begin{proof}
Fix $r\geq2$.  We apply Construction~\ref{construction.salvetti} to the lattices $\Lambda_{r^s}$ for $s\geq1$ in the preceding proposition and obtain a sequence of lattices $\widetilde{\Lambda}_{r^s}$ in $S_L$.  To see the covolume converges to $0$ note that the fundamental domain of $\Lambda_{r^s}$ is fixed for all $s$ but the stabilisers increase in order, then apply Serre's covolume formula. It remains to show that the inclusions $\Lambda_{r^s}\rightarrowtail\Lambda_{r^{s'}}$ induce inclusions $\widetilde{\Lambda}_{r^s}\rightarrowtail\widetilde{\Lambda}_{r^{s'}}$ for $s'<s$.  Consider the covering space $\pi:\widetilde{X}_L\to X$ where $X$ is as in Construction~\ref{construction.salvetti}.  Note that $X$ and hence $\Aut(\pi)$ does not depend on $r$ or $s$ since each group acts with the same fundamental domain.  In particular, as $\Lambda_{r^s}<\Lambda_{r^{s'}}$ we have $\widetilde{\Lambda}_{r^s}<\widetilde{\Lambda}_{r^{s'}}$ for $s< s'$.  We set $\Gamma_n=\widetilde \Lambda_{r^n}$ to complete the proof.
\end{proof}

% Let $J$ be a simplicial complex on $[m]$ and let $V\subseteq [m]$.  The \emph{double of $J$ over $V$}, denoted $\cald(J;V)$ is the simplicial complex with vertices $[m]\backslash V \cup \{v^+,v^-\colon v\in V\}$ and simplices described as follows: $[w_1^{\epsilon_1},\dots,w_n^{\epsilon_n}]$, where $\epsilon_i\in\{+,-,\ \ \}$, spans an $n$-simplex in $\cald(J,V)$ if and only if $[w_1,\dots,w_n]$ spans an $n$-simplex in $J$.

\begin{thm}\label{thm.salvetti.ext}
Let $L$ be a flag complex on $m$ vertices.  Let $X$ be a proper $\CAT(0)$ space and assume $H<\Isom(X)$ acts cocompactly and minimally.  The following conclusions hold:
\begin{enumerate}
    \item Let $\Gamma$ be a group acting on $\calt_{2k}$ by label-preserving isometries.  Then the action of $\Gamma$ on $\calt$ extends to an action of $\widetilde{\Gamma}$ on $\widetilde{S}_L$ by isometries.\label{thm.salvetti.ext.1}
    \item If $\Gamma$ is a uniform lattice in $H\times T_{2k}$, then $\widetilde{\Gamma}$ is a uniform lattice in $H\times\Aut(\widetilde{S}_L)$.\label{thm.salvetti.ext.2}
    \item If in addition $X$ is a $\CAT(0)$ polyhedral complex and $\Gamma$ is an $(H\times T_{2k})$-lattice, then $\widetilde{\Gamma}$ is an $(H\times\Aut(\widetilde{S}_L))$-lattice.\label{thm.salvetti.ext.3}
    \item If the projection of $\Gamma$ to $H$ (resp. $T_{2k}$) is non-discrete, then so is the projection of $\widetilde{\Gamma}$ to $H$ (resp. $\Aut(\widetilde{S}_L)$).\label{thm.salvetti.ext.4}
\end{enumerate}
\end{thm}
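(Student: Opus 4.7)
My plan is to apply Construction~\ref{construction.salvetti} after choosing appropriate free generators in $A_L$, and then verify the lattice properties in turn. Because $L=\bigvee_{i=1}^k K$ is a wedge of $k$ pointed flag complexes, I first select one non-basepoint vertex $v_i$ from each copy of $K$; vertices in distinct factors of a wedge are never joined by an edge, so $v_1,\dots,v_k$ are pairwise non-adjacent in $L$, whence $\langle v_1,\dots,v_k\rangle\cong F_k<A_L$. This is precisely the input required for Construction~\ref{construction.salvetti}. Applying it (after replacing $\Gamma$ with a finite-index label-preserving subgroup if necessary, which does not affect any of the four stated conclusions) yields the extension $\widetilde{\Gamma}$ acting on $\widetilde{S}_L$ by isometries and fitting into $1\to\Aut(\pi)\to\widetilde{\Gamma}\to\Gamma\to 1$, establishing \eqref{thm.salvetti.ext.1}.

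For \eqref{thm.salvetti.ext.2} and \eqref{thm.salvetti.ext.3} the task is to embed $\widetilde{\Gamma}$ discretely into $H\times\Aut(\widetilde{S}_L)$ with the right covolume. The $H$-action factors as $\widetilde{\Gamma}\twoheadrightarrow\Gamma\xrightarrow{\pi_H}H$, while the $\widetilde{S}_L$-action comes from the construction; injectivity of the diagonal map follows because $\Aut(\pi)$ acts freely on $\widetilde{S}_L$ as a group of deck transformations and $\Gamma$ is already faithful in $H\times T_{2k}$. For \eqref{thm.salvetti.ext.2}, cocompactness follows from $\widetilde{S}_L/\widetilde{\Gamma}=X/\Gamma$ being a finite CW-complex, since $\calt_{2k}/\Gamma$ is finite and $X$ is obtained from $\calt_{2k}$ by attaching finitely many cells per vertex orbit. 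A lifting argument shows that each stabiliser $\Gamma_v$ lifts uniquely to a subgroup $\widetilde{\Gamma}_{\tilde v}$ fixing a chosen preimage $\tilde v$ (use that any lift can be post-composed with the unique deck transformation returning $\tilde v$ to itself); combined with the discreteness of $\Gamma$ in $H\times T_{2k}$ this gives discreteness of $\widetilde{\Gamma}$. For \eqref{thm.salvetti.ext.3}, the same canonical identification of vertex stabilisers lets me express Serre's covolume formula for $\widetilde{\Gamma}$ over $\widetilde{S}_L$ in terms of the corresponding sum for $\Gamma$ over $\calt_{2k}$, which converges by hypothesis. Alternatively, Theorem~\ref{thm.structure} supplies a graph of $H$-lattices structure on $\Gamma$ which the construction upgrades to a complex of $H$-lattices over $X/\Gamma$, so Theorem~\ref{thm.col} applies directly.

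Part \eqref{thm.salvetti.ext.4} splits in two. The equality $\pi_H(\widetilde{\Gamma})=\pi_H(\Gamma)$ is immediate from the factorisation of the $H$-action, so non-discreteness in $H$ transfers automatically. For the other half, given $g_n\in\Gamma$ with $\pi_{T_{2k}}(g_n)\to 1$, I would lift to $\tilde g_n\in\widetilde{\Gamma}$ fixing a preferred preimage of a base vertex; since label-preserving automorphisms of $X$ that fix a vertex automatically fix the loops attached at that vertex, the fact that $g_n$ fixes arbitrarily large balls in $\calt_{2k}$ forces it to fix arbitrarily large balls in $X$, and covering-space geometry (together with uniform separation of $\Aut(\pi)$-orbits) then forces $\tilde g_n$ to fix the corresponding balls in $\widetilde{S}_L$. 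The main obstacle I expect is the label-preservation issue underlying Construction~\ref{construction.salvetti}: a generic isometric action of $\Gamma$ on $\calt_{2k}$ need not preserve a labelling, so one must either exploit the $\Sym(k)$-symmetry built into the wedge $L=\bigvee_{i=1}^k K$ to absorb label permutations, or pass to a finite-index subgroup; verifying that this reduction is harmless for each of the four conclusions (together with keeping Haar-measure normalisations on $\Aut(\widetilde{S}_L)$ and $T_{2k}$ consistent in the covolume comparison) is the technical heart of the argument.
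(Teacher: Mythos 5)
Your overall architecture matches the paper's: pick one non-basepoint vertex in each wedge factor to get $F_k<A_L$, pass to the cover $\pi\colon\widetilde{S}_L\to X$ corresponding to $\Ker(A_L\twoheadrightarrow F_k)$, take $\widetilde{\Gamma}$ to be the group of lifts, and then verify \eqref{thm.salvetti.ext.2}--\eqref{thm.salvetti.ext.4} by the diagonal embedding, Serre's covolume formula, and the observation that $\pi_H(\widetilde{\Gamma})=\pi_H(\Gamma)$ while nontrivial tree automorphisms lift to nontrivial automorphisms of $\widetilde{S}_L$. Those parts are fine and agree with the paper.

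The gap is exactly at the point you flag as ``the technical heart'' and then do not carry out: the label-preservation issue in part \eqref{thm.salvetti.ext.1}. Your primary route --- ``replacing $\Gamma$ with a finite-index label-preserving subgroup if necessary, which does not affect any of the four stated conclusions'' --- fails on both counts. First, a finite-index label-preserving subgroup need not exist: preserving the $F_k$-Cayley-graph labelling of $\calt_{2k}$ (colours \emph{and} orientations) is a genuine restriction on a subgroup of $\Aut(\calt_{2k})$, not a virtual property, and for the lattices this theorem is applied to (e.g.\ the Leary--Minasyan groups, whose vertex stabilisers permute the ten edges at a vertex in two $5$-cycles) there is no reason such a subgroup exists. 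Second, even where it does exist, the reduction does change conclusion \eqref{thm.salvetti.ext.1}, which asserts that the action of the \emph{full} group $\Gamma$ extends; you would only produce an extension of a finite-index subgroup, and the specific group $\widetilde{\Gamma}$ in \eqref{thm.salvetti.ext.2}--\eqref{thm.salvetti.ext.4} would likewise be the wrong one. The hypothesis $L=\bigvee_{i=1}^k K$ is there precisely to \emph{remove} the label-preservation assumption from Construction~\ref{construction.salvetti}, not to make it easier to arrange: since the $k$ attached sheets are lifts of one and the same complex $S_K$, the link of every vertex of the intermediate cover $X$ is the double $\cald(L;\calv)$, which is invariant under \emph{arbitrary} permutations of the $2k$ half-edges corresponding to $\calv^{\pm1}$; hence every isometry of $\calt_{2k}$, label-preserving or not, extends over $X$ by permuting the sheets of the lifts of $S_K$, and then lifts to $\widetilde{S}_L$. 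That one-step symmetry argument is what the proof needs and what your write-up leaves as an unexecuted alternative.
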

\begin{proof}
The proof of \eqref{thm.salvetti.ext.1} is exactly Construction~\ref{construction.salvetti}.

Now, let $\Gamma$ be the group as in the theorem statement.  The desired group $\widetilde{\Gamma}$ can now be described as the group of lifts of all automorphisms in $\Gamma$. In particular we obtain an extension $\Aut(\pi)\to\widetilde{\Gamma}\to\Gamma$. $\blackdiamond$

The proof of \eqref{thm.salvetti.ext.2} follows from taking the diagonal embedding $\widetilde{\Gamma}\rightarrowtail H\times\Aut(\widetilde{S}_L)$ and then noting that the quotient $(\widetilde{S}_L\times X) / \widetilde{\Gamma}$ is compact and that cardinality of each of the vertex stabilisers is finite. $\blackdiamond$ 

We prove \eqref{thm.salvetti.ext.3} in the same manner, noting the covolume on the product space is finite by Serre's Covolume Formula. $\blackdiamond$

The images of the projections of $\Gamma$ and $\widetilde{\Gamma}$ to $H$ coincide.  Since any element of $\Gamma$ which acts non-trivially on $\calt_{2k}$ lifts to an element acting non-trivially on $\widetilde{S}_L$, the non-discreteness of $\pi_{T_{2k}}(\Gamma)$ implies the non-discreteness of $\pi_{\Aut(\widetilde{S}_L)}(\widetilde{\Gamma})$.  This proves \eqref{thm.salvetti.ext.4}. $\blackdiamond$
\end{proof}

\section{A functor theorem}\label{sec.functor}
In this section will show that the functors introduced by A. Thomas in \cite{Thomas2006} take graphs of $H$-lattices with a fixed Bass--Serre tree to complexes of $H$-lattices whose development is a ``sufficiently symmetric" right-angled building (we will make this precise later).  Finally, we will combine these tools to construct a number of examples.  In particular, non-residually finite $(\Isom(\EE^n)\times A)$-lattices where $A$ is the automorphism group of a sufficiently symmetric right-angled building, and non-residually finite algebraically irreducible lattices in products of arbitrarily many isometric and non-isometric sufficiently symmetric right-angled buildings.

\subsection{Right angled buildings} \label{sec.functor.rab}
We now recount the background we need, fore more information the reader is referred to Davis' book \cite{Davis2008}.  Let $(W,I)$ be a right-angled Coxeter system.  Let $N$ be the finite nerve of $(W,I)$ and $P'$ be the simplicial cone on $N'$ with vertex $x_0$.  A \emph{right-angled building} of type $(W,I)$ is a polyhedral complex $X$ equipped with a maximal family of subcomplexes called \emph{apartments}.  Such an apartment is isometric to the Davis complex for $(W,I)$ and the copies of $P'$ in $X$ are called \emph{chambers}.  Moreover, the apartments and chambers satisfy the axioms for a Bruhat--Tits building.

Let $\cals$ denote the set of $J\subseteq I$ such that $W_J\leq W$ is finite.  Note that $W_\emptyset=\{1\}$ so $\emptyset\in\cals$. For each $i\in I$, the vertex $P'$ of \emph{type} $\{i\}$ will be called an \emph{$i$-vertex}, and the union of the simplices of $P'$ which contains the $i$-vertex but not $x_0$ will be called the \emph{$i$-face}  There is a one-to-one correspondence between the vertices of $P'$ and the types $J\in\cals$.

Let $X$ be a right-angled building.  A vertex of $X$ has a type $J\in\cals$ induced by the types of $P'$.  For $i\in I$ an \emph{$\{i\}$-residue} of $X$ is the connected subcomplex consisting of all chambers which meet in a given $i$-face.  The cardinality of the $\{i\}$-residue is the number of copies of $P'$ in it.

\begin{thm}[\cite{HaglundPaulin2003}]
Let $(W,I)$ be a right-angled Coxeter system and $\{q_i\colon i\in I\}$ a set of integers such that $q_i\geq 2$.  Then up to isometry there exists a unique building $X$ of type $(W,I)$ such that for each $i\in I$ the $\{i\}$-residue of $X$ has cardinality $q_i$.
\end{thm}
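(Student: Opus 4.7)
The plan is to split the argument into existence (an explicit construction) and uniqueness (a back-and-forth extension of isometries), each of which exploits the particularly rigid combinatorics available in the right-angled setting.

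For existence, I would build the desired $X$ from its chamber system. For each $i\in I$ fix an arbitrary set $Q_i$ of cardinality $q_i$ with a distinguished basepoint. Let $C$ be the graph product of the pointed sets $(Q_i)_{i\in I}$ over the commutation graph of $(W,I)$; concretely, $C$ is the set of reduced words $q_{i_1}\cdots q_{i_k}$ with $q_{i_j}\in Q_{i_j}\setminus\{\ast\}$ and $i_1\cdots i_k$ a reduced word in $W$, taken modulo the commutations allowed by $(W,I)$. Define $c\sim_i c'$ iff $c$ and $c'$ differ by right multiplication by an element of $Q_i$, so each $\{i\}$-residue has cardinality exactly $q_i$. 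Geometrically realize this by taking one copy $P'_c$ of the Davis chamber $P'$ for each $c\in C$ and gluing $P'_c$ to $P'_{c'}$ along their $i$-faces whenever $c\sim_i c'$. The axioms of a building are verified by noting that apartments are identified with the subcomplexes indexed by coset classes coming from the standard copy of $W$ inside the graph product, and each such apartment is isometric to the Davis complex of $(W,I)$ by construction.

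For uniqueness, let $X$ and $X'$ be two right-angled buildings of type $(W,I)$ whose $\{i\}$-residues have the same cardinalities $q_i$. Pick base chambers $C_0 \subset X$, $C'_0 \subset X'$ together with a type-preserving isometry $\phi_0 : C_0 \to C'_0$. Extend by induction on combinatorial gallery distance from $C_0$: at each step, for every $\{i\}$-residue $R$ in $X$ containing at least one chamber in the current domain, pick any type-preserving bijection of $R$ with the residue $R'$ in $X'$ containing $\phi(R\cap\mathrm{dom}\,\phi)$, using that $|R|=|R'|=q_i$. Extend $\phi$ to each new chamber by the canonical type-preserving isometry of $P'$ to $P'$.

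The main obstacle is well-definedness: different galleries from $C_0$ to the same chamber must yield the same image. The key point is that in a right-angled Coxeter system any two reduced expressions for the same element of $W$ differ by a sequence of commutation moves on pairs of commuting generators. Consequently, any two minimal galleries between fixed chambers in a right-angled building differ by elementary homotopies supported on $2$-residues of type $\{i,j\}$ with $i,j$ commuting, and each such $2$-residue is canonically a product of an $\{i\}$-residue with a $\{j\}$-residue. One checks by induction on the number of such elementary moves that the extension $\phi$ is insensitive to the chosen gallery; this uses only that the local bijections chosen at step $n$ are consistent with the product structure on $2$-residues, which can be arranged by a further inductive choice. Granted this coherence, the global map is a type-preserving isometry $X\to X'$, proving uniqueness.
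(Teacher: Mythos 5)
This statement is not proved in the paper at all: it is quoted as an external result of Haglund--Paulin \cite{HaglundPaulin2003}, so there is no internal argument to compare yours against. Judged on its own, your outline follows the standard route (a graph-product chamber system for existence, a gallery-by-gallery extension for uniqueness), but it asserts rather than proves the two points where the actual content lies. On the existence side, the set of reduced words modulo commutations is indeed the chamber system of the graph product of groups of orders $q_i$ over the commutation graph, and its realisation by copies of $P'$ is the desired building; but the verification of the building axioms is only claimed, and the description of the apartments is wrong as stated: unless every $q_i=2$ there is no ``standard copy of $W$'' inside the graph product of the $Q_i$. Apartments correspond to thin sub-chamber-systems, i.e.\ consistent choices of one chamber in each panel, and one still has to check that any two chambers lie in a common one (this follows from the normal form theorem for graph products, but it is a step, not an observation).

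The more serious gap is in uniqueness. You correctly isolate well-definedness as the obstacle, but the clause ``which can be arranged by a further inductive choice'' conceals the entire difficulty. Concretely: two $\{i\}$-residues contained in a common spherical $\{i,j\}$-residue (with $m_{ij}=2$) are canonically in bijection via $j$-adjacency, and your extension is consistent only if the bijections you choose onto residues of $X'$ intertwine \emph{all} of these parallelisms simultaneously. A given $\{i\}$-residue is parallel to many others through many different rank-two residues, so one must show that transporting a bijection around a closed chain of parallelisms returns the identity; this coherence statement is the real content of the Haglund--Paulin uniqueness theorem (their analysis of tree-walls), and it does not follow from induction on gallery length or from the word problem for right-angled Coxeter groups alone. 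As written, the proof of uniqueness is therefore incomplete at its central step.
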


If $(W,I)$ is generated by reflections in an $n$-dimensional right-angled hyperbolic polygon $P$, then $P'$ is the barycentric subdivision of $P$.  Moreover, the apartments of $X$ are isometric to $\RH^n$.  In this case we call $X$ a \emph{hyperbolic building}.  %We remark that a right-angled building can be expressed as the universal cover of a polyhedral product, however, we will not use this observation elsewhere.

% \begin{remark}
% Let $(W,I)$ be a right-angled Coxeter system with parameters $\{q_i\}$ and nerve $N$.  Let $E_i$ be a set of size $q_i$ and let $CE_i$ denote the simplicial cone on $E_i$, denote the collections of these by $\underline{E}$ and $C\underline{E}$ respectively.  The right-angled building of type $(W,I)$ with parameters $\{q_i\}$ is the universal cover of the polyhedral product $(C\underline{E},\underline{E})^N$.
% \end{remark}

\subsection{A functor theorem}
In this section we will recap a functorial construction of A.~Thomas which takes graphs of groups with a given universal covering tree to complexes of groups with development a right-angled building.  We will then show that this functor takes graphs of lattices to complexes of lattices and deduce some consequences.

Let $\calt$ be a tree, let $X$ be a right-angled building.  We define some categories: 
\begin{itemize}
    \item Let $\calg$ denote the category with objects graphs of groups and with arrows given by morphisms of graphs of groups.
    \item Let $\calg(\calt)$ denote the subcategory of $\calg$ with objects consisting of graphs of groups with universal covering tree $\calt$.
    \item Let $\calc$ denote the category with objects complexes of groups and with arrows given by  morphisms of complexes of groups
    \item Let $\calc_1$ denote the subcategory of $\calc$ consisting of complexes of groups over 1–dimensional polyhedral complexes (that is, simplicial graphs), and morphisms over nondegenerate polyhedral maps.
    \item Let $\calc(X)$ denote the subcategory of $\calc$ with objects consisting of developable complexes of groups with unviversal cover $X$.
\end{itemize}

\begin{defn}
Let $X$ be a right-angled building of type $(W,I)$ and parameters $\{q_i\}$ with chamber $P'$.  Suppose $m_{i_1,i_2}=\infty$ and define the following two symmetry conditions due to Thomas \cite{Thomas2006}:
\begin{enumerate}[label=(T\arabic*)]
    \item There exists a bijection $g$ on $I$ such that $m_{i,j}=m_{g(i),g(j)}$ for all $i,j\in I$, and $g(i_1)=i_2.$ \label{Thomas.1}
    \item There exists a bijection $h:\{i\in I: m_{i_1,i}<\infty\}\to \{i\in I:m_{i_2,i}<\infty\}$ such that $m_{i,j}=m_{h(i),h(j)}$ for all $i,j$ in the domain, $h(i_1)=i_2$, and for all $i$ in the domain $q_i=q_{h(i)}$. \label{Thomas.2}
\end{enumerate}
\end{defn}

We include the construction adapted from \cite{Thomas2006} for completeness and for utility in the proofs of the new results which will follow.  An example of the construction for a graph of groups consisting of a single edge is given in Figure~\ref{fig.pentagon}

\newdimen\R
\R=2.3cm
\begin{figure}[h!]
    \centering
    \begin{tikzpicture}[scale=0.7]
 \draw[xshift=0.0\R] (0:\R) \foreach \x in {72,144,...,359} {
            -- (\x:\R) } 
        -- cycle (360:\R) node[circle,fill=black,inner sep=0pt,minimum size=5pt,label=right:{$\{i_2,i_4\}$}] (c) {}
        -- cycle (288:\R) node[circle,fill=black,inner sep=0pt,minimum size=5pt,label=below right:{$\{i_2,i_5\}$}] (d) {}
        -- cycle (216:\R) node[circle,fill=black,inner sep=0pt,minimum size=5pt,label=below left:{$\{i_1,i_5\}$}] (e) {}
        -- cycle (144:\R) node[circle,fill=black,inner sep=0pt,minimum size=5pt,label=above left:{$\{i_1,i_3\}$}] (a) {}
        -- cycle  (72:\R) node[circle,fill=black,inner sep=0pt,minimum size=5pt,label=above right:{$\{i_3,i_4\}$}] (b) {};
        \draw (a) -- (b) node [midway, circle,fill=black,inner sep=0pt,minimum size=5pt,label=above left:{$\{i_3\}$}] (ab) {};
        \draw (b) -- (c) node [midway, circle,fill=black,inner sep=0pt,minimum size=5pt,label=above right:{$\{i_4\}$}] (bc) {};
        \draw (c) -- (d) node [midway, circle,fill=black,inner sep=0pt,minimum size=5pt,label=below right:{$\{i_2\}$}] (cd) {};
        \draw (d) -- (e) node [midway, circle,fill=black,inner sep=0pt,minimum size=5pt,label=below:{$\{i_5\}$}] (de) {};
        \draw (e) -- (a) node [midway, circle,fill=black,inner sep=0pt,minimum size=5pt,label=left:{$\{i_1\}$}] (ea) {};
        \node [circle,fill=black,inner sep=0pt,minimum size=5pt,label={[right,xshift=0.34cm,yshift=0.125cm]:{$\emptyset$}}] (m) {};
        \draw (m) -- (a);
        \draw (m) -- (b);
        \draw (m) -- (c);
        \draw (m) -- (d);
        \draw (m) -- (e);
        \draw (m) -- (ab);
        \draw (m) -- (bc);
        \draw[dashed] (m) -- (cd);
        \draw (m) -- (de);
        \draw[dashed] (m) -- (ea);
\end{tikzpicture}  
\begin{tikzpicture}[scale=0.7]
 \draw[xshift=0.0\R] (0:\R) \foreach \x in {72,144,...,359} {
            -- (\x:\R) } 
        -- cycle (360:\R) node[circle,fill=black,inner sep=0pt,minimum size=5pt,label=right:{$G_w\times \ZZ_{q_4}$}] (c) {}
        -- cycle (288:\R) node[circle,fill=black,inner sep=0pt,minimum size=5pt,label=below right:{$G_w\times \ZZ_{q_5}$}] (d) {}
        -- cycle (216:\R) node[circle,fill=black,inner sep=0pt,minimum size=5pt,label=below left:{$G_v\times \ZZ_{q_5}$}] (e) {}
        -- cycle (144:\R) node[circle,fill=black,inner sep=0pt,minimum size=5pt,label=above left:{$G_v\times\ZZ_{q_3}$}] (a) {}
        -- cycle  (72:\R) node[circle,fill=black,inner sep=0pt,minimum size=5pt,label={[right,xshift=0cm]:{$G_e\times \ZZ_{q_3}\times\ZZ_{q_4}$}}] (b) {};
        \draw (a) -- (b) node [midway, circle,fill=black,inner sep=0pt,minimum size=5pt,label={[above,xshift=-0.15cm]:{$G_e\times\ZZ_{q_3}$}}] (ab) {};
        \draw (b) -- (c) node [midway, circle,fill=black,inner sep=0pt,minimum size=5pt,label=right:{$G_e\times \ZZ_{q_4}$}] (bc) {};
        \draw (c) -- (d) node [midway, circle,fill=black,inner sep=0pt,minimum size=5pt,label=below right:{$G_w$}] (cd) {};
        \draw (d) -- (e) node [midway, circle,fill=black,inner sep=0pt,minimum size=5pt,label=below:{$G_e\times \ZZ_{q_5}$}] (de) {};
        \draw (e) -- (a) node [midway, circle,fill=black,inner sep=0pt,minimum size=5pt,label=left:{$G_v$}] (ea) {};
        \node [circle,fill=black,inner sep=0pt,minimum size=5pt, label={[right,xshift=0.34cm,yshift=0.125cm]:{$G_e$}}] (m) {};
        \draw (m) -- (a);
        \draw (m) -- (b);
        \draw (m) -- (c);
        \draw (m) -- (d);
        \draw (m) -- (e);
        \draw (m) -- (ab);
        \draw (m) -- (bc);
        \draw[dashed] (m) -- (cd);
        \draw (m) -- (de);
        \draw[dashed] (m) -- (ea);
\end{tikzpicture}
    \caption[A complex of groups over the pentagon.]{The left pentagon shows a labelling of the types $J\in\cals$.  The right pentagon shows the local groups after applying Thomas' functor to a graph of groups with a single edge.  In both pentagons the dashed line shows the embedding of the graph.  If the graph of groups has a single vertex, then $G_v=G_w$, $q_1=q_2$, $q_3=q_4$, the edge $(\{i_1,i_5\},\{i_1\})$ is glued to $(\{i_2,i_5\},\{i_2\})$, and the edge $(\{i_1,i_3\},\{i_1\})$ is glued to $(\{i_2,i_4\},\{i_2\})$.}
    \label{fig.pentagon}
\end{figure}
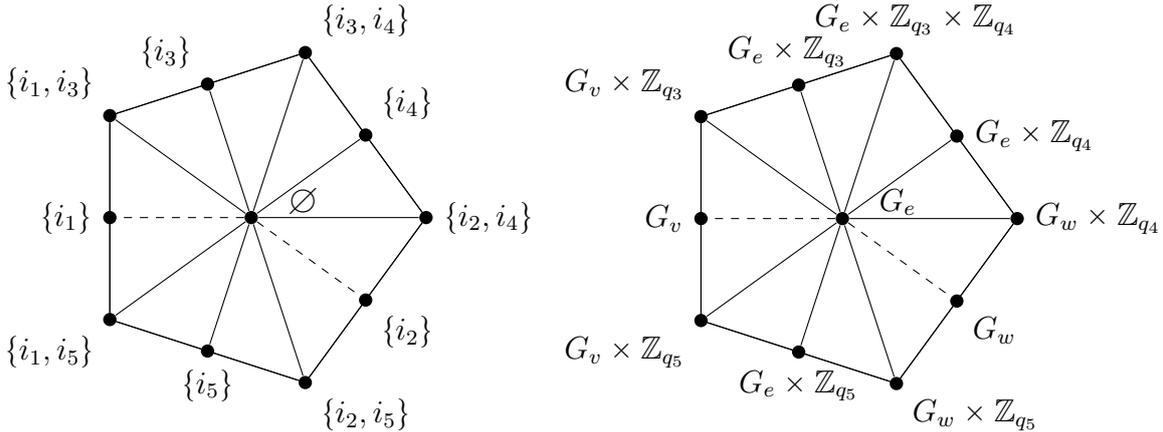

\begin{construction}[Thomas' Functor \cite{Thomas2006}] 
\textit{Let $X$ be a right-angled building of type $(W,I)$ and parameters $\{q_i\}$.  For each $i_1,i_2\in I$ such that $m_{i_1,i_2}=\infty$ let $\calt$ be the $(q_{i_1},q_{i_2})$-biregular tree.  Suppose \ref{Thomas.1} holds and if $q_{i_1}=q_{i_2}$ then \ref{Thomas.2} holds with $g$ an extension of $h$.  Then there is functor $F:\calg(\calt)\to\calc(X)$ preserving faithfulness and coverings.}

We will construct $F$ as a composite $F_2\circ F_1$.  We first define $F_1:\calg\to\calc_1$.  Let $(A,\cala)$ be a graph of groups and $|A|$ the geometric realisation of $A$.  We will construct a complex of groups $F_1(A)$ over $|A|$.  For the objects we have:
\begin{itemize}
    \item The local groups at the vertices of $|A|$ are the vertex groups of $\cala$.
    \item For all $e\in EA$ let $\sigma_e=\sigma_{\overline{e}}$ be the vertex of the barycentric subdivision $|A|'$ at the midpoint of $e$.
    \item The local group at $\sigma_e$ in $F_1(A)$ is $A_e=A_{\overline{e}}$.
    \item A monomorphism $\alpha_e:A_e\to A_{i(e)}$ in $A$ induces the same monomorphism in $F_1(A)$.
\end{itemize}
Let $\phi:A\to B$ be a morphism of graphs of groups over a map of graphs $f$, note that by \cite[Proposition~2.1]{Thomas2006} $F_1$ is not injective on morphisms.  We define $F_1(\phi)$ as follows:
\begin{itemize}
    \item The map $f$ induces a polyhedral map $f':|A|'\to|B|'$ so we will define $F_1(\phi):F_1(A)\to F_1(B)$ over $f$.
    \item Now take the morphisms on the local groups to be the same as for $\phi$.
%    \item Let $a\in E|A|'$, then the structure map $\psi_a$ in $F_1(A)$ is the same as some $\alpha_e$ in $A$
\end{itemize}
Let $\calc(\calt)=\im(F_1(\calg(\calt)))$ and $G(Y)\in\calc(\calt)$.  Now, we will define $F_2:\calc(\calt)\to\calc(X)$ as follows:
\begin{itemize}
    \item We first embed $Y'$ into a canonically constructed polyhedral complex $F_2(Y)$.  For each $e\in EY$ let $P_e'$ be a copy of $P'$ and identify the midpoint of $e$ with the cone vertex $x_0$ of $P_e'$.
    \item If $Y$ is $2$-colourable with colours $i_1$ and $i_2$ (from the valences of the Bass--Serre tree if $q_{i_1}\neq q_{i_2}$), then we identify the vertex of $e$ of type $i_j$ with the $i_j$-vertex of $P_e'$.
    \item Suppose $Y$ is not $2$-colourable.  If $e\in EY$ is not a loop in $Y$ then identify one vertex of $e$ with the $i_1$-vertex of $P_e'$ and the other with the $i_2$-vertex.  If $e$ forms a loop then we attach $P_e'/h$ (where $h$ is the isometry from the assumption) and identify the vertex of $e$ to the image of the $i_1$- and $i_2$-vertices of in $P_e'/h$.
    \item Glue together, either by preserving type on the $i_1$- and $i_2$-faces or by the isometry $h$, the faces of the the $P_e'$ and $P_e'/h$ whose centres correspond to the same vertex of $Y$.  Let $F_2(Y)$ denote the resulting polyhedral complex.
    \item Note that $Y'\rightarrowtail F_2(Y)$ and that each vertex of $F_2(Y)$ has a unique type $J\in\cals$ or two types $J$ and $h(J)$ where $i_1\in J\in\cals$ and $h$ is the isometry from the assumption.
    \item Fix the local groups and structure maps induced by the embedding of $Y'$ in $F(Y)$.  For each $i\in I$ let $G_i=\ZZ_{q_i}$ and for $J\subseteq I$ let $G_J=\prod_{j\in J}G_j$.  For each $e\in EY$ let $G_e$ be the local group at the midpoint of $e$.
    \item Let $J\in \cals$ such that neither $i_1$ or $i_2$ are in $J$.  The local group at a vertex of type $J$ is $G_e\times G_J$.  The structure maps between such local groups are the natural inclusions.
    \item Let $J\in\cals$ and suppose $i_k\in J$ for one of $k=1$ or $k=2$.  Since $m_{i_1,i_2}=\infty$ both $i_1$ and $i_2$ cannot be in $J$.  Let $F_e$ be the $i_k$-face of $P_e'$ or the glued face of $P_e'/h$.  The vertex of type $J$ in $P_e'$ or $P_e'/h$ is contained in $F_e$.  Let $v$ be the vertex of $Y$ identified with the centre of $F_e$ and let $G_v$ be the local group at $v$ in $G(Y)$
    \item The local group at the vertex of type $J$ is $G_v\times G_{J\backslash \{i_k\}}$.  For each $J'\subset J$ with $i_k\in J'$ the structure map $G_v\times G_{J'\backslash \{i_k\}}\rightarrowtail G_v\times  G_{J\backslash \{i_k\}}$ is the natural inclusion.  For each $J'\subset J$ with $i_k\not\in J'$ the structure map $G_e\times G_{J'}\rightarrowtail G_v\times  G_{J\backslash \{i_k\}}$ is the product of the structure map $G_e\rightarrowtail G_v$ in $G(Y)$ and the natural inclusion.
\end{itemize}
Now, let $\phi:G(Y)\to H(Z)$ be a morphism in $\calc(\calt)$ over a non-degenerate polyhedral map $f:Y\to Z$.  We will define $F_2(\phi)$ as follows:
\begin{itemize}
    \item If $Y$ and $Z$ are two colourable $f$ extends to a polyhedral map $F_2(f):F_2(Y)\to F_2(Z)$.  Otherwise we use \ref{Thomas.1} to construct $F_2(f)$.
    \item If $\tau\in VF(Y)$ then $G_\tau=G_\sigma\times G_J$ where $\sigma$ is a vertex of $Y'$.  The homomorphism of local groups $G_\sigma\times G_J\to H_{f(\sigma)}\times G_{J}$ is $\phi_\sigma$ on the first factor and the identity on the other factors.
    \item Let $a\in EF(Y)$.  If $\psi_a$, the structure map along $a\in F_2(G(Y))$, has a structure map $\psi_b$ from $G(Y)$ as its first factor, put $F_2(\phi)(b)=\phi(a)$.  Otherwise set $F_2(\phi)(b)=1$.
\end{itemize}
\end{construction}

We will now show the functor takes graphs of lattices to complexes of lattices and deduce a number of consequences.  For locally compact group $H$ let $\rm{Lat}(H)$ denote the set of $H$-lattices and let $\rm{Lat}_u(H)$ denote the set of uniform $H$-lattices.

\begin{thm}\label{thm.functor.new}
Let $Y$ be a right-angled building of type $(W,I)$ and parameters $\{q_i\}$ and let $A=\Aut(Y)$.  For each $i_1,i_2\in I$ such that $m_{i_1,i_2}=\infty$ let $\calt$ be the $(q_{i_1},q_{i_2})$-biregular tree and let $T=\Aut(\calt)$.  Suppose \ref{Thomas.1} holds and if $q_{i_1}=q_{i_2}$ then \ref{Thomas.2} holds with $g$ an extension of $h$, and let $F:\calg(\calt)\to\calc(Y)$ be Thomas' functor.  Let $X$ be a finite dimensional proper $\CAT(0)$ space and assume $H=\Isom(X)$ contains a cocompact lattice.  The following conclusions hold:
\begin{enumerate}
    \item If $G(\calt)$ is a graph of $H$-lattices, then $F(G(\calt))$ is a complex of $H$-lattices. \label{thm.functor.gtoc}
    \item $F$ induces an inclusion of sets $\rm{Lat}_u(H\times T)\rightarrowtail\rm{Lat}_u(H\times A)$. \label{thm.functor.uniform}
    \item If $Y$ is a $\CAT(0)$ polyhedral complex then $F$ induces an inclusion of sets $\rm{Lat}(H\times T)\rightarrowtail \rm{Lat}(H\times A)$. \label{thm.functor.nonuni}
\end{enumerate}
Let $\Gamma$ be a uniform $(H\times T)$-lattice and let $F\Gamma$ be the corresponding $(H\times A)$-lattice.
\begin{enumerate}[resume]
    \item $\pi_T(\Gamma)$ is discrete if and only if $\pi_A(F\Gamma)$ is discrete.  Moreover, $\pi_H(\Gamma)=\pi_H(F\Gamma)$. \label{thm.functor.proj}
    \item If $\Gamma$ satisfies any of $\{$algebraically irreducible, non-residually finite, not virtually torsion free$\}$, then so does $F\Gamma$. \label{thm.functor.props}
\end{enumerate}
\end{thm}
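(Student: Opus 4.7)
The plan is to address each of the five parts in sequence, with the structure theorems for graphs and complexes of $H$-lattices providing the main machinery. The key observation throughout is that $F$ modifies local groups by direct products with finite abelian groups $G_J = \prod_{j \in J} \ZZ_{q_j}$, the morphism $\psi_F$ to $H$ factors through projection onto the original $G_\sigma$ factor, and the Bass-Serre tree $\calt$ of $G(\calt)$ embeds as a subcomplex of the universal cover $Y$ of $F(G(\calt))$.

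For part \eqref{thm.functor.gtoc}, I would check the three conditions of Definition~\ref{def.col} directly. Since each $G_\sigma$ is covirtually an $H$-lattice with $\psi(G_\sigma)$ an $H$-lattice, the same holds for $G_\sigma \times G_J$ equipped with the morphism killing the finite factor; commensurability is inherited since the $|G_J|$ are bounded; and the edge elements $e^{\pm}$ either come from the original graph (where the commensurator condition already holds) or are new elements whose images in $H$ are trivial. Parts \eqref{thm.functor.uniform} and \eqref{thm.functor.nonuni} then combine the two structure theorems: given $\Lambda \in \rm{Lat}_u(H \times T)$, the converse direction of Theorem~\ref{thm.structure}\eqref{thm.gol.uniform} yields a finite graph of uniform $H$-lattices with Bass-Serre tree $\calt$; applying $F$ via part \eqref{thm.functor.gtoc} produces a complex of $H$-lattices whose universal cover is $Y$, and Theorem~\ref{thm.col}\eqref{thm.col.uniform} delivers $F\Lambda \in \rm{Lat}_u(H \times A)$. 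Injectivity is immediate since the original graph of groups is recoverable from the complex by restricting to $\calt$. For the non-uniform case, the covolume sum transfers since $\mu(G_\sigma \times G_J)/|K_\sigma \times G_J| = \mu(G_\sigma)/|K_\sigma|$ and each original vertex contributes only finitely many new summands.

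For part \eqref{thm.functor.proj}, the equality $\pi_H(\Gamma) = \pi_H(F\Gamma)$ is immediate by construction since the $G_J$ factors map trivially under $\psi_F$. For the discreteness equivalence, I would exploit the natural restriction map from the subgroup $A_\calt \leq A$ stabilising $\calt$ to $T$, obtained by restricting automorphisms of $Y$ to the embedded subtree; this map has compact kernel consisting of automorphisms acting trivially on $\calt$ but permuting the finite cyclic structures of chambers. Under this map, non-discreteness of $\pi_T(\Gamma)$ corresponds precisely to non-discreteness of $\pi_A(F\Gamma)$.

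Finally, for part \eqref{thm.functor.props}, I would use Proposition~\ref{thm.reducibility} and its natural extension to complexes of lattices over right-angled buildings to characterise algebraic irreducibility as non-discreteness of the projection to $H$; by part \eqref{thm.functor.proj} this is equivalent for $\Gamma$ and $F\Gamma$. Non-residual finiteness transfers via Theorem~\ref{thm.CM.nonresfin}, as the natural embedding $\Gamma \hookrightarrow F\Gamma$ identifies $\Ker(\pi_H|_\Gamma)$ with a subgroup of $\Ker(\pi_H|_{F\Gamma})$, so a non-trivial kernel on the one side forces one on the other. Not being virtually torsion-free transfers because any finite-index torsion-free subgroup of $F\Gamma$ would intersect $\Gamma$ in a finite-index torsion-free subgroup. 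The main obstacle will be part \eqref{thm.functor.proj}: establishing the restriction $A_\calt \to T$ rigorously, since automorphisms of $Y$ permute the finite abelian fibres of chambers in ways invisible on $\calt$, so one must carefully verify that this restriction is continuous with compact kernel in a manner compatible with the action of $F\Gamma$.
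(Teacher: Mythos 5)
Your overall strategy for parts \eqref{thm.functor.gtoc}--\eqref{thm.functor.nonuni} coincides with the paper's: verify Definition~\ref{def.col} for the image complex (with the morphism to $H$ killing the finite factors $G_J$), then chain the converse direction of Theorem~\ref{thm.structure} with the forward direction of Theorem~\ref{thm.col}, checking the covolume sum in the non-uniform case exactly as you do. The one step you gloss over there is the verification of the faithfulness hypothesis of Theorem~\ref{thm.col}\eqref{thm.col.uniform}: one must check that $\Ker(\pi_H|_{G_\sigma\times G_J})=L_\sigma\times G_J$ acts faithfully on the building, which the paper does by noting that $G_J$ acts faithfully by construction while $L_\sigma$ acts faithfully on the embedded copy of $\calt$. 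For part \eqref{thm.functor.proj} your restriction-map argument is more elaborate than the paper's (which simply derives both claims from the construction in part \eqref{thm.functor.gtoc}), but it is a legitimate alternative provided you also justify that $F\Gamma$ interacts with the embedded tree as you claim.

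The genuine problems are in part \eqref{thm.functor.props}. First, for non-residual finiteness you argue that non-residual finiteness of $\Gamma$ is witnessed by $\Ker(\pi_H|_\Gamma)\neq 1$ and then propagate the kernel to $F\Gamma$; but Theorem~\ref{thm.CM.nonresfin} only gives the implication in the other direction (non-trivial kernel implies non-residually finite), so a non-residually finite $\Gamma$ need not have non-trivial kernel to $H$, and your argument does not cover all cases. The correct (and simpler) argument, which the paper uses, is that residual finiteness is inherited by subgroups, so $\Gamma\rightarrowtail F\Gamma$ immediately gives the contrapositive --- the same mechanism you correctly use for virtual torsion-freeness. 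Second, for algebraic irreducibility you invoke ``the natural extension of Proposition~\ref{thm.reducibility} to complexes of lattices over right-angled buildings'', which is nowhere established and would require reproving the Burger--Mozes normaliser and profinite-stabiliser arguments in that setting; the paper instead applies Theorem~\ref{thm.CMirrCrit}, which is already stated for arbitrary proper $\CAT(0)$ spaces and, combined with part \eqref{thm.functor.proj}, characterises irreducibility of the uniform lattices $\Gamma$ and $F\Gamma$ by the same non-discreteness conditions. Both defects are repairable, but as written the proof of part \eqref{thm.functor.props} is incomplete.
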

\begin{proof}
We first prove \eqref{thm.functor.gtoc}.  We will first verify the conditions on the local groups and then construct a morphism to $H$.  Let $(B,\calb,\psi)$ be a graph of $H$-lattices and consider the image $L(Z)$ of $\calb$ under $F$.  Here $Z=F(B)$.  Each local group in $L(Z)$ is of the form $G_\sigma\times G_J$ where $G_\sigma$ is a local group in $\calb$ and $G_J$ is a finite product of finite cyclic groups.  We have a morphism $\psi:\calb\to H$ such that the image of each local group $G_\sigma$ is an $H$-lattice and the restriction to $G_\sigma$ has finite kernel.  Thus, by construction the local groups in $L(Z)$ are commensurable in $\pi_1(L(Z))$.  We define $F(\psi_\sigma)$ to be the composite $\psi|_{G_\sigma}\circ\pi_\sigma:G_\sigma\times G_J\twoheadrightarrow G_\sigma\to\psi(G_\sigma)$, thus commensurability of the images in $H$ is immediate.

We will now deal with the edges.  Note the twisting elements in $L(Z)$ are all trivial and the complex of groups $H$ has all structure maps the identity.  Let the structure maps in $L(Z)$ be denoted by $\lambda_a$ for $a\in EZ'$ and the structure maps in $\calb$ by $\alpha_e$ for $e\in EB$.  The family of elements $(t_e)_{e\in EB}$ in the path group $\pi(\calb)$ are mapped under $\psi$ to elements of $\Comm_H(\psi(G_\sigma))$ where $G_\sigma$ is some local group.  Now, let $a\in EZ'$, then by construction $a$ either corresponds to a subdivision of an edge $a$ in $EB$ in which case we define $(F\psi)(a)=\psi(a)$.  Or, $a$ corresponds to a inclusion of local groups $G_\sigma\times G_{J'}\to G_\sigma\times G_J$, in which case we define $(F\psi)(a)=1_H$.  

It remains to verify the two edge axioms for a morphism.  For each $a\in EZ'$ corresponding to the subdivision of an edge $a$ in $EB$ we have \[\Ad((F\psi)(a))\circ F(\psi_{i(a)})=\Ad(\psi(a))\circ \psi_{i(a)}\circ \pi_a=\psi_{t(a)}\circ\alpha_a\circ \pi_a=F(\psi_{t(a)})\circ F(\alpha_a),\]
where $\pi_a$ is the surjection $G_a\times G_J\twoheadrightarrow G_a$.  For any other edge $a\in EZ'$ we have
\[\Ad((F\psi)(a))\circ F(\psi_{i(a)})=F(\psi_{i(a)})\text{ and }F(\psi_{t(a)})\circ\lambda_a=F(\psi_{i(a)}).\]
Finally, the other condition that $(F\psi)(ab)=(F\psi)(a)(F\psi)(b)$ for $(a,b)\in E^2Z'$ is verified trivially.  Thus, $F(\calb)=L(Z)$ is a complex of $H$-lattices. $\blackdiamond$

We will next prove \eqref{thm.functor.uniform}.  Let $\Gamma$ be an $(H\times T)$-lattice.  By Theorem~\ref{thm.structure}, $\Gamma$ splits as graph of $H$-lattices $\calb$.  Thus, by \eqref{thm.functor.gtoc} we obtain a complex of $H$-lattices $F(\calb)$ with fundamental group $\Lambda$.  By Theorem~\ref{thm.col}\eqref{thm.col.uniform} it suffices to show that for each local group $G_\sigma$ in $F(\calb)$ the kernel $K_\sigma=\Ker(\pi_H|_{FG_\sigma})$ acts faithfully on $X$.  Now, $K_\sigma$ is a direct product of $L_\sigma=\Ker(\pi_H|_{G_\sigma})$ with a direct product of cyclic groups $G_J$, where $G_\sigma$ is a local group in $\calb$.  By construction $G_J$ acts faithfully on $X$ and by Theorem~\ref{thm.structure}, $K_\sigma$ acts faithfully on $\calt$ which embeds into $Y$.  In particular, $K_\sigma$ acts faithfully on $X$. $\blackdiamond$

We will next prove \eqref{thm.functor.nonuni}.  We construct a complex of lattices as in the previous case.  The proof for \eqref{thm.functor.nonuni} is now identical once we have verified that covolume condition in Theorem~\ref{thm.col}\eqref{thm.col.nonuniform}.  Let $c$ denote the covolume of an $(H\times T)$-lattice $\Gamma$ with associated graph of lattices $(B,\calb)$, this is given by the formula $c=\sum_{\sigma\in VA}\mu(\Gamma_\sigma)<\infty$.  Now, every vertex of the complex $Z=F(B)$ has local group isomorphic to a finite extension of some $\Gamma_\sigma$.  In particular we may bound $\sum_{\sigma\in Z}\mu(\Gamma_\sigma)$ by $\ell\times c$ where $\ell$ is the number of vertices in the finite Coxeter nerve of $X$. $\blackdiamond$

The proof of \eqref{thm.functor.proj} follows from the proof of \eqref{thm.functor.gtoc}. $\blackdiamond$

The proof of \eqref{thm.functor.props} follows from either applying Theorem~\ref{thm.CMirrCrit} to \eqref{thm.functor.proj} (algebraically irreducible) or the fact $\Gamma\rightarrowtail F\Gamma$ and the properties of residual finiteness and virtual torsion-freeness are subgroup closed. $\blackdiamond$
\end{proof}

\subsection{Examples and applications}

In this section we will detail some sample examples and applications of the functor theorem.

We can obtain a number of examples by applying Thomas' functor to any irreducible $(\Isom(\EE^n)\times T)$-lattice.  This will give a non-biautomatic group acting properly discontinuously cocompactly on $\EE^n\times X$ where $X$ is a sufficiently symmetric right-angled building.  More precisely, we have the following corollary:

\begin{corollary}[General version of Corollary~\ref{corx.thomas.notbiaut}]\label{cor.thomas.notbiaut}
Let $Y$ be a right-angled building of type $(W,I)$ and parameters $\{q_i\}$ and let $A=\Aut(Y)$.  For each $i_1,i_2\in I$ such that $m_{i_1,i_2}=\infty$ let $\calt$ be the $(q_{i_1},q_{i_2})$-biregular tree and let $T=\Aut(\calt)$.  Suppose \ref{Thomas.1} holds and if $q_{i_1}=q_{i_2}$ then \ref{Thomas.2} holds with $g$ an extension of $h$ and let $F:\calg(\calt)\to\calc(Y)$ be Thomas' functor.  Let $\Gamma$ be a uniform $(\Isom(\EE^n)\times T)$-lattice and suppose $\pi_{\OO(n)}(\Gamma)$ is infinite.  Then, $F\Gamma$ is a uniform $(\Isom(\EE^n)\times A)$-lattice which is not virtually biautomatic nor residually finite.  In particular, if $Y$ is irreducible, then the direct product of a uniform $A$-lattice with $\ZZ^2$ is not quasi-isometrically rigid.
\end{corollary}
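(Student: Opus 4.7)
The first task is to assemble the hypotheses of Theorems~\ref{thm.notbiaut} and~\ref{thm.CM.nonresfin} for $F\Gamma$. Theorem~\ref{thm.functor.new}\eqref{thm.functor.uniform} immediately gives that $F\Gamma$ is a uniform $(\Isom(\EE^n)\times A)$-lattice. Since $\OO(n)$ is compact, any infinite subgroup of it is non-discrete, so the hypothesis that $\pi_{\OO(n)}(\Gamma)$ is infinite implies $\pi_{\Isom(\EE^n)}(\Gamma)$ is non-discrete. In particular $\pi_{\Isom(\EE^n)}(\Gamma)$ cannot be an $\Isom(\EE^n)$-lattice, as every such lattice is virtually $\ZZ^n$ by Bieberbach and so has finite image in $\OO(n)$. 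Proposition~\ref{thm.reducibility} then forces $\Gamma$ to be algebraically irreducible. Transport via Theorem~\ref{thm.functor.new}\eqref{thm.functor.props} yields algebraic irreducibility of $F\Gamma$, and \eqref{thm.functor.proj} gives $\pi_{\Isom(\EE^n)}(F\Gamma)=\pi_{\Isom(\EE^n)}(\Gamma)$, which remains non-discrete.

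With these in hand, non-biautomaticity of $F\Gamma$ follows from Theorem~\ref{thm.notbiaut}, applied to $F\Gamma<\Isom(\EE^n)\times A$ with $A$ acting cocompactly and minimally on the right-angled building $Y$ (decomposing $Y$ into its irreducible $\CAT(0)$ factors if necessary, none of which is isometric to $\EE$). For non-residual-finiteness, the plan is to apply Theorem~\ref{thm.CM.nonresfin} to $F\Gamma$ acting on $\EE^n\times Y$. The crucial point is that Thomas' functor attaches finite cyclic factors $G_J=\prod_{j\in J}\ZZ_{q_j}$ to the vertex groups of the complex of lattices $F(G(\calt))$, and by construction each such subgroup maps trivially under $\psi\colon F(G(\calt))\to\Isom(\EE^n)$. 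Hence $\Ker(\pi_{\Isom(\EE^n)}|_{F\Gamma})$ is infinite, and intersecting with the characteristic finite-index subgroup from Theorem~\ref{thm.CM.Isom} still leaves the kernel non-trivial, so Theorem~\ref{thm.CM.nonresfin} rules out residual finiteness.

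For the quasi-isometric rigidity conclusion assume $Y$ is irreducible. Theorem~\ref{thm.col}\eqref{thm.col.qi} gives that $F\Gamma$ is quasi-isometric to $\EE^n\times Y$, and the \v{S}varc--Milnor lemma gives that any uniform $A$-lattice $\Gamma_A$ is quasi-isometric to $Y$. Taking $\ZZ^n\leq\Isom(\EE^n)$ as a uniform lattice, the direct product $\Gamma_A\times\ZZ^n$ is quasi-isometric to $Y\times\EE^n$ and so to $F\Gamma$. Algebraic irreducibility is a virtual-isomorphism invariant, and $\Gamma_A\times\ZZ^n$ is manifestly algebraically reducible while $F\Gamma$ is algebraically irreducible, so these two groups are quasi-isometric but not virtually isomorphic; since both are $\CAT(0)$, $\Gamma_A\times\ZZ^n$ fails quasi-isometric rigidity even within the class of $\CAT(0)$ groups.

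The principal obstacle is checking that $\Ker(\pi_{\Isom(\EE^n)}|_{\Gamma'})$ remains non-trivial for $\Gamma'=F\Gamma\cap H$, which is the version of the kernel that Theorem~\ref{thm.CM.nonresfin} actually uses. Concretely this amounts to observing that the $G_J$-summands in the local groups of $F(G(\calt))$ intersect the finite-index subgroup $\Gamma'$ in finite-index (hence non-trivial) subgroups. Once this is settled, the remainder is a routine assembly of Theorem~\ref{thm.functor.new}, the Caprace--Monod machinery, and the \v{S}varc--Milnor lemma.
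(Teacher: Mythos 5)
Your overall architecture matches the paper's: Theorem~\ref{thm.functor.new} to produce the uniform $(\Isom(\EE^n)\times A)$-lattice and control its projections, Theorem~\ref{thm.notbiaut} for non-biautomaticity, and the virtual-isomorphism invariance of algebraic irreducibility against the reducibility of $\Gamma_A\times\ZZ^2$ for the failure of quasi-isometric rigidity. You also supply an explicit argument for non-residual finiteness (via the infinite kernel of $\pi_{\Isom(\EE^n)}|_{F\Gamma}$ coming from the $G_J$-factors and Theorem~\ref{thm.CM.nonresfin}), which the paper's own proof is entirely silent about; that is a genuine and welcome addition.

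However, there is a gap in the step where you claim ``Proposition~\ref{thm.reducibility} then forces $\Gamma$ to be algebraically irreducible.'' Proposition~\ref{thm.reducibility} is stated only for $X$ a proper minimal \emph{irreducible} $\CAT(0)$ space or for $\EE^2$; it does not cover $\Isom(\EE^n)$ for $n\geq 3$, and the implication you need --- that $\pi_{\Isom(\EE^n)}(\Gamma)$ failing to be a lattice forces irreducibility --- is genuinely false in that generality. For instance $\LM(A)\times\ZZ^{n-2}$ is a uniform $(\Isom(\EE^n)\times T_{10})$-lattice with infinite image in $\OO(n)$ (hence non-lattice, non-discrete projection to $\Isom(\EE^n)$), yet it is manifestly algebraically reducible. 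This matters downstream: Theorem~\ref{thm.CM.nonresfin} takes algebraic irreducibility as a hypothesis, so your non-residual-finiteness argument, and likewise the quasi-isometric rigidity argument, both rest on this unjustified step when $n\geq3$. The paper's proof instead treats $F\Gamma$ as weakly irreducible (this is built into the intended examples, and is what the existence statement of Corollary~\ref{corx.thomas.notbiaut} requires) and deduces algebraic irreducibility from Theorem~\ref{thm.CMirrCrit}; to repair your argument you should either add weak irreducibility of $\Gamma$ to the hypotheses you work with and invoke Theorem~\ref{thm.CMirrCrit} together with Theorem~\ref{thm.functor.new}\eqref{thm.functor.proj}, or restrict to $n=2$ where Proposition~\ref{thm.reducibility} does apply. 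The remaining steps (non-discreteness of $\pi_{\Isom(\EE^n)}(\Gamma)$ from Bieberbach plus cocompactness of the projection, the \v{S}varc--Milnor comparison, and the transport of properties through $F$) are fine.
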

\begin{proof}
By Theorem~\ref{thm.functor.new} $F\Gamma$ is a uniform $(\Isom(\EE^n)\times A)$-lattice with a non-discrete projection to $\OO(n)$.  That $F\Gamma$ is not virtually biautomatic then follows from Theorem~\ref{thm.notbiaut}.  The failure of quasi-isometric rigidity follows from the fact that the direct product of a uniform $A$ lattice with $\ZZ^2$ is reducible, whereas, the weakly irreducible lattice is algebraically irreducible by Theorem~\ref{thm.CMirrCrit} and so does not virtually split as a direct product of two infinite groups.  In particular, the groups cannot by virtually isomorphic.
\end{proof}

\begin{example} \label{ex.LMX.pres}
Let $\Gamma=\LM(A)$ where $A$ is the matrix corresponding to the Pythagorean triple $(3,4,5)$ .  Recall the group acts on $\EE^2\times\calt_{10}$.  Let $X$ be the right angled building whose Coxeter nerve is the regular pentagon and whose parameters are given by $q_1=q_2=10$, $q_3=q_4=k$, and $q_5=\ell$.  Let $A$ be the automorphism group of $X$ and consider $F\Gamma$ the image of $\Gamma$ under Thomas' functor $F$ as in Figure~\ref{fig.pentagon}.  By Theorem~\ref{thm.functor.new}, the group $F\Gamma$ is a non-residually finite $(\Isom(\EE^n)\times A)$-lattice with non-discrete projections to both factors and is irreducible as an abstract group.  Moreover, by the previous corollary, $F\Gamma$ is not virtually biautomatic.

We will now construct a presentation for $\Lambda_{k,\ell}:=F\Gamma$.  The group has generators $a,b,x_3,x_4,x_5,t$ and relations
\[x_3^{k}=x_4^k=x_5^\ell=1,\ [a,b],\ [a,x_3],\ [a,x_4],\ [a,x_5],\ [b,x_3],\ [b,x_4],\ [b,x_5],\ [x_3,x_4], \]
\[ta^2b^{-1}t^{-1}=a^2b,\ tab^2t^{-1}=a^{-1}b^2,\ tx_3t^{-1}=x_4, [t,x_5]. \]
\end{example}

\begin{prop}
The group $\Lambda_{2,2}$ in Example~\ref{ex.LMX.pres} is virtually torsionfree.  This is witnessed by the index $16$ subgroup \[\Delta:=\langle a,\ b,\ x_3tx_4t^{-1},\ x_3x_4t^{-2},\ (x_5x_3)^2,\ (x_5x_4)^2,\ t^{-1}x_3x_4t^{-1},\ (tx_5x_4t^{-1})^2\rangle.\]
\end{prop}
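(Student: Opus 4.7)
The plan is to identify $\Delta$ with the kernel of an explicit surjective homomorphism $\phi \colon \Lambda_{2,2} \twoheadrightarrow Q$ onto a finite group $Q$ of order $16$; this yields both $[\Lambda_{2,2}:\Delta]=16$ and the torsion-freeness of $\Delta$ simultaneously.  For brevity, write $\alpha = x_3 t x_4 t^{-1}$, $\beta = x_3 x_4 t^{-2}$, $\gamma = (x_5 x_3)^2$, $\delta = (x_5 x_4)^2$, $\epsilon = t^{-1} x_3 x_4 t^{-1}$, and $\zeta = (tx_5 x_4 t^{-1})^2$ for the six non-$\{a,b\}$ generators of $\Delta$.

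First I would take $Q = D_4 \times \ZZ/2$, with $D_4 = \langle r, s \mid r^4, s^2, srs = r^{-1}\rangle$, and define
\[
\phi(a) = \phi(b) = 1,\quad \phi(t) = (r, 0),\quad \phi(x_3) = (s, 0),\quad \phi(x_4) = (sr^2, 0),\quad \phi(x_5) = (1, 1).
\]
Checking that $\phi$ respects the defining relations of Example~\ref{ex.LMX.pres} is routine: the Leary--Minasyan HNN relations are vacuous as $\phi(a)=\phi(b)=1$; the torsion relation $x_4^2=1$ reduces to $(sr^2)^2=1$, which follows from $r^2 s = sr^{-2}$; the commutator $[x_3,x_4] = 1$ holds because $\{1, s, r^2, sr^2\}$ is a Klein four subgroup of $D_4$; the twist $tx_3 t^{-1} = x_4$ amounts to $rsr^{-1} = sr^{-2} = sr^2$; and the commutation relations involving $x_5$ hold since $(1,1)$ is central in $Q$.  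Surjectivity is clear.

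Next I would verify $\Delta \subseteq \ker\phi$ by evaluating $\phi$ on each generator. The elements $a, b$ vanish by construction.  The three squared generators $\gamma, \delta, \zeta$ map to $1$ because their $\ZZ/2$-coordinates double to zero and their $D_4$-coordinates are squares of involutions (namely $s$, $sr^2$, and $r(sr^2)r^{-1} = s$, respectively).  Direct computation gives $\phi(\alpha) = (sr)^2 = 1$, $\phi(\beta) = s\cdot sr^2\cdot r^{-2} = 1$, and $\phi(\epsilon) = r^{-1}\cdot r^2\cdot r^{-1} = 1$.  Hence $\Delta \subseteq \ker\phi$ and in particular $[\Lambda_{2,2}:\Delta]\geq 16$.

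The main obstacle is the reverse inclusion $\ker\phi \subseteq \Delta$, equivalently $[\Lambda_{2,2}:\Delta]\leq 16$.  My plan is to apply the Reidemeister--Schreier procedure to the presentation of $\Lambda_{2,2}$ with the Schreier transversal
\[
T = \{t^i x_3^j x_5^k : 0 \le i \le 3,\ 0 \le j, k \le 1\},
\]
which set-theoretically lifts $Q$.  One then checks that each Schreier generator $\tau g\,(\overline{\tau g})^{-1}$, for $\tau\in T$ and $g$ a generator of $\Lambda_{2,2}$, reduces (after simplification using the defining relations of $\Lambda_{2,2}$) to a word in $\{a,b,\alpha,\beta,\gamma,\delta,\epsilon,\zeta\}$.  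The pairs with $g\in\{a^{\pm 1},b^{\pm 1}\}$ give $\Lambda_{2,2}$-conjugates of $a$ and $b$ expressible via the HNN relations; the pairs involving $g=t$ shifted by $x_3$-powers produce $\beta$, $\alpha$, and $\epsilon$ (the last after reducing $t^4\in\ker\phi$ via $\alpha,\beta$); and the pairs $(x_5, x_3),(x_5, x_4)$ yield $\gamma,\delta$, whose $t$-conjugate gives $\zeta$.  Completing this case analysis (which can be cross-checked by a Todd--Coxeter coset enumeration or a computer algebra verification) identifies $\ker\phi$ with the subgroup generated by the eight listed elements; thus $\Delta=\ker\phi$ and $[\Lambda_{2,2}:\Delta]=16$.

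Finally, torsion-freeness of $\Delta$ follows from the complex of lattices description of $\Lambda_{2,2}$ in Example~\ref{ex.LMX.pres} together with Theorem~\ref{thm.col}: every finite subgroup of $\Lambda_{2,2}$ is conjugate into a local group of the complex of groups $F(\calb)$ over the pentagon, whose finite parts are subgroups of $\langle x_3, x_4\rangle \cong (\ZZ/2)^2$ or of $\langle x_j\rangle$ for $j\in\{3,4,5\}$.  Since $\phi(x_3)=(s,0)$, $\phi(x_4)=(sr^2,0)$, $\phi(x_3 x_4)=(r^2,0)$, and $\phi(x_5)=(1,1)$ are all non-trivial in $Q$, no conjugate of a non-trivial element of any such finite subgroup lies in $\ker\phi=\Delta$.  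Therefore $\Delta$ is torsion-free.
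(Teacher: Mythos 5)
Your proposal is correct and follows essentially the same route as the paper: exhibit $\Delta$ as the kernel of a surjection onto $D_4\times\ZZ_2$, then use the complex-of-lattices structure over the pentagon to see that every nontrivial torsion element is conjugate into $\langle x_3,x_4\rangle$ or $\langle x_5\rangle$ and survives in the quotient. The paper simply asserts $\Lambda_{2,2}/\Delta\cong D_4\times\ZZ_2$, whereas you supply the explicit homomorphism and the verification $\Delta\subseteq\ker\phi$; the remaining reverse inclusion, which you correctly reduce to a routine Reidemeister--Schreier/coset-enumeration check, is likewise left implicit in the paper.
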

\begin{proof}
  The quotient $\Lambda_{2,2}/\Delta$ is isomorphic to $D_4\times \ZZ_2$ which has order $16$.  By construction every torsion element of $\Lambda_{2,2}$ is conjugate to some power of $x_3$, $x_4$, $x_5$ or $x_3x_4$.  Indeed, every torsion element is contained in a vertex or edge stabiliser of the action on the pentagonal building and acts trivially on $\EE^2$. Each of these elements is mapped to a non-trivial element of $D_4\times\ZZ_2$.  In particular, the kernel $\Delta$ is torsion-free.
\end{proof}

\begin{example}
Let $n\geq 2$ and let $\Gamma_n$ be the irreducible lattice constructed in Example~\ref{ex.LM.odddim.lat} acting on $\EE^n\times\calt_{10n}$.  Let $X$ be a right angled building satisfying \ref{Thomas.1} and \ref{Thomas.2} with automorphism group $A$ and parameters $\{q_i \}$ all equal to $10n$.  Applying Thomas' functor and Theorem~\ref{thm.functor.new} to $\Gamma_n$ we obtain a non-residually finite $(\Isom(\EE^n)\times A)$-lattice with non-discrete projections to both factors.  Moreover by Corollary~\ref{cor.thomas.notbiaut}, $\Gamma_n$ is not virtually biautomatic.
\end{example}

We will now show the existence of non-residually finite lattices in arbitrary products of sufficiently symmetric isometric and non-isometric right-angled buildings.  We note that Bourdon's ``hyperbolization of Euclidean buildings" \cite[Section~1.5.2]{Bourdon2000} can be used to construct weakly irreducible uniform lattices in products of hyperbolic buildings.  We will provide a number of examples to show that the groups we construct here are distinct.

\begin{corollary}\label{thm.treelatstobuildings}
Let $\Gamma$ be a weakly irreducible lattice in product of trees $\calt_1\times\dots\times \calt_n$ such that $\calt_k$ is $(t_{k_1},t_{k_2})$-biregular.  Let $X_1\times\dots\times X_n$ be a product of irreducible right angled buildings satisfying \ref{Thomas.1} and \ref{Thomas.2}.  Suppose $X_k$ is of type $(W_k,I_k)$, has parameters $\{t_{k_1},t_{k_2},q_{k_3},\dots,q_{k_{n_k}}\}$ where $m_{k_{i_1},k_{i_2}}=\infty$ and $A_k=\Aut(X_k)$.  Then, the lattice $\Lambda=F^n\Gamma$ obtained by applying Thomas' functor $n$ times (once for each tree $\calt_k$ corresponding to the building $X_k$) is a lattice in $A_1\times\dots\times A_n$, is irreducible, and is non-residually finite.
\end{corollary}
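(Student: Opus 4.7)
The plan is to iterate Thomas' functor $n$ times, replacing one tree factor $\calt_k$ by the building $X_k$ at each step. To make sense of the $k$-th application, I view the group $F^{k-1}\Gamma$ as a lattice in $H_k\times T_k$, where $T_k=\Aut(\calt_k)$ and $H_k=A_1\times\dots\times A_{k-1}\times\Aut(\calt_{k+1})\times\dots\times\Aut(\calt_n)$. By Theorem~\ref{thm.structure}, $F^{k-1}\Gamma$ then splits as a graph of $H_k$-lattices with Bass-Serre tree $\calt_k$. The hypotheses on the parameters of $X_k$ are exactly what is required to feed this graph of $H_k$-lattices into Thomas' functor, and Theorem~\ref{thm.functor.new}\eqref{thm.functor.gtoc}--\eqref{thm.functor.uniform} produce $F^k\Gamma$ as a (uniform) lattice in $H_k\times A_k$. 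Iterating yields $F^n\Gamma$ as a lattice in $A_1\times\dots\times A_n$.

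For weak irreducibility, the point is that non-discreteness of projections is preserved at each step by Theorem~\ref{thm.functor.new}\eqref{thm.functor.proj}: the projection of $F^k\Gamma$ to $A_k$ is non-discrete iff the projection of $F^{k-1}\Gamma$ to $T_k$ is non-discrete, while projections to the other factors are unchanged by that application of $F$. Since $\Gamma$ projects non-discretely to every proper sub-product of the $T_k$, an induction on $k$ shows the same for $F^n\Gamma$ with respect to the $A_k$. Combining with Theorem~\ref{thm.CMirrCrit} (applied to the uniform lattice $F^n\Gamma$ acting minimally on the product of the $X_k$), this upgrades to algebraic irreducibility of $F^n\Gamma$. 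Alternatively, observe that $\Gamma$ itself is already algebraically irreducible by Theorem~\ref{thm.CMirrCrit}, and apply Theorem~\ref{thm.functor.new}\eqref{thm.functor.props} inductively.

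The main obstacle, and the step worth dwelling on, is non-residual finiteness, since the starting $\Gamma$ need not be non-residually finite (for instance $S$-arithmetic lattices in products of trees are residually finite). The key observation is that the $k$-th application of Thomas' functor adds, to each local group, a direct factor of the form $G_{J_k}$ consisting of cyclic groups $\ZZ_{q_{k,i}}$ which act non-trivially on the new building factor $X_k$ but trivially on every other factor of the ambient product. Thus for any $k'\neq k$ the kernel of the projection $\pi_{A_{k'}}:F^n\Gamma\to A_{k'}$ contains the non-trivial finite group $G_{J_k}$ (embedded in some local group of the complex of lattices splitting of $F^n\Gamma$). Since each $A_{k'}$ is (up to a finite factor) the isometry group of the irreducible $\CAT(0)$ factor $X_{k'}$, Theorem~\ref{thm.CM.nonresfin} applies and gives that $F^n\Gamma$ is not residually finite. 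The only care needed is to check the hypotheses of Theorem~\ref{thm.CM.nonresfin} --- in particular that the action on the product of the $X_k$ is minimal and cocompact, both of which pass from the irreducibility of each $X_k$ together with uniform cocompactness of $F^n\Gamma$.
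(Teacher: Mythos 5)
Your proposal is correct and follows essentially the same route as the paper: iterate Thomas' functor once per factor, viewing $F^{k-1}\Gamma$ as a graph of $H_k$-lattices over $\calt_k$ via Theorem~\ref{thm.structure}, and deduce non-residual finiteness from the fact that the finite groups $G_{J_k}$ introduced by the functor lie in the kernel of the projection to the remaining factors, so Theorem~\ref{thm.CM.nonresfin} applies. The paper's own proof is just a terser version of this same induction; your elaboration of the irreducibility bookkeeping and of where the non-trivial kernel comes from matches the intended argument.
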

\begin{proof}
Let $T_k=\Aut(\calt_k)$.  The result follows from applying Theorem~\ref{thm.functor.new} $n$ times as follows.  Consider $\Gamma$ as a graph of $(T_2\times\dots\times T_n)$-lattices and apply $F$ to obtain a $(A_1\times T_2\times\dots\times T_n)$-lattice with the desired properties (non-residual finiteness follows from the fact that the projection to $T_2\times\dots\times T_n$ has a non-trivial kernel).  Now, consider $F\Gamma$ as a graph of $(A_1\times T_3\times\dots T_n)$-lattices and proceed by induction on the index $k$.
\end{proof}

\begin{example} In \cite{BurgerMozes2000lat,BurgerMozes1997} Burger and Mozes construct for each pair of sufficiently large even integers $(m,n)$ a finitely presented simple group as a uniform lattices in a product of trees $\calt_m\times\calt_n$ (for more examples see \cite{Rattaggi2007a,Rattaggi2007b,Radu2020}).  Applying Theorem~\ref{thm.treelatstobuildings}, we obtain uniform non-residually finite algebraically and weakly irreducible lattices acting on a product of buildings $X_1\times X_2$ each satisfying \ref{Thomas.1} and \ref{Thomas.2} with $X_1$ having some parameters equal to $m$ and $X_2$ having some parameters equal to $n$.
\end{example}

\begin{example}
In \cite{Hughes2021b} the author constructed a non-virtually torsion-free irreducible uniform lattice in a product of two locally finite trees.  Applying Theorem~\ref{thm.treelatstobuildings}, we obtain uniform non-residually finite non-virtually torsion-free lattices acting on a product of buildings.
\end{example}

\section{Some questions}
In every example of an $(\Isom(\EE^n)\times T)$-lattice known to the author, the lattice is virtually torsion-free.

\begin{question}
Are there non-virtually torsion-free $(\Isom(\EE^n)\times T)$-lattices?
\end{question}

Since it is possible to characterise $(\Isom(\EE^n)\times T)$-lattice in terms of $C^\ast$-simplicity, it would be interesting to recover the characterisation for complexes of $\Isom(\EE^n)$-lattices.

\begin{question}
Are the algebraically irreducible non-biautomatic groups constructed in and Section~\ref{sec.functor} $C^\ast$-simple?
\end{question}

More generally we ask:

\begin{question}
When is a $\CAT(0)$ lattice $C^\ast$-simple?
\end{question}

The characterisation of algebraically irreducible $(\Isom(\EE^n)\times T)$-lattices in terms of $C^\ast$-simplicity suggests the following question:

\begin{question}
Can $C^\ast$-simplicity of a Leary-Minasyan group $\LM(A)$ be determined by properties of the matrix $A$?
\end{question}

It remains open whether there are irreducible lattices in products of Salvetti complexes and a Euclidean space.

\begin{question}
    For which flag complexes $L$ and positive integers $n$ are there irreducible uniform lattices in the product $\Aut(X_L)\times\Isom(\EE^n)$?
\end{question}

\bibliographystyle{halpha}
\bibliography{refs.bib}

\end{document}